\numberwithin{equation}{section}
\theoremstyle{plain}
\newtheorem{thm}{Theorem}[section] 
\newtheorem{prop}[thm]{Proposition}
\newtheorem{cor}[thm]{Corollary}
\newtheorem{lem}[thm]{Lemma}
\newtheorem{theorem*}{Theorem}[]
\theoremstyle{definition}
\newtheorem{defn}[thm]{Definition}
\newtheorem{example}[thm]{Example}
\theoremstyle{remark}
\newtheorem{rem}[thm]{Remark}
\newcommand{\C}{\mathbb{C}}
\newcommand{\N}{\mathbb{N}}
\newcommand{\R}{\mathbb{R}}
\newcommand{\Q}{\mathbb{Q}}
\newcommand{\Z}{\mathbb{Z}}
\newcommand{\inv}{^{-1}}
\newcommand{\half}{\frac{1} 2}
\DeclareMathOperator{\id}{id\,}
\DeclareMathOperator{\supp}{supp\,}
\DeclareMathOperator{\mult}{mult\,}
\DeclareMathOperator{\ord}{ord\,}
\DeclareMathOperator{\grad}{grad}
\DeclareMathOperator{\NB}{NB}
\def\accentclass@{7}
\def\makeacc@#1#2{\def#1{\mathaccent"\accentclass@#2 }}
\makeacc@\cir{017}
\title[Equivalence relations for real analytic function germs]
{Equivalence relations for two variable real analytic function germs}
\author{Satoshi Koike \& Adam Parusi\'nski }
\address {Department of Mathematics, Hyogo University
of Teacher Education, 942-1 Shimokume, Kato,
Hyogo 673-1494, Japan}
\email {koike@hyogo-u.ac.jp}
\address {Laboratoire Angevin de Recherche en Math\'ematiques, UMR
  6093 du CNRS, Universit\'e d'Angers,
   2, bd Lavoisier, 49045 Angers cedex, France}
\email{adam.parusinski@univ-angers.fr}
\subjclass{Primary: 32S15. Secondary: 14B05, 57R45}
\newcommand{\abstracttext}{For two variable real analytic function germs we compare 
the blow-analytic equivalence in the sense of Kuo to the other natural equivalence relations.  
Our  main theorem states that $C^1$ equivalent germs are  blow-analytically 
equivalent. This gives a negative answer to a conjecture of Kuo.  In the proof we show that 
the Puiseux pairs of real Newton-Puiseux roots are preserved by the $C^1$ 
equivalence of function germs.  
The proof is achieved, being based on a combinatorial characterisation of 
blow-analytic equivalence in terms 
of the real tree model. 

We also give several examples of bi-Lipschitz equivalent germs that are not blow-analytically equivalent.  }
\begin{document}

\begin{abstract} \abstracttext\end{abstract}

\keywords{Blow-analytic equivalence, Tree model, Puiseux pairs, $C^1$ equivalence, bi-Lipschitz equivalence}

\footnotetext[1]{The second named author was partially supported by the JSPS  Invitation Fellowship Program. ID No. S-07026}

\maketitle


\vspace{1cm}  




The natural equivalence relations we first think of
are the $C^r$ coordinate changes for $r = 1, 2, \cdots , \infty ,
\omega$, where $C^\omega$ stands for real analytic.  
Let $f$, $g : (\R^n,0) \to (\R,0)$ be real analytic function germs.
We say that $f$ and $g$ are $C^r$ {\em (right)  equivalent}
if there is a local $C^r$ diffeomorphism
$\sigma : (\R^n,0) \to (\R^n,0)$ such that
$f = g \circ \sigma$. 
If $\sigma$ is a local bi-Lipschitz homeomorphism, 
resp. a local homeomorphism, then  we say that 
$f$ and $g$ are {\em bi-Lipschitz equivalent}, resp. $C^0$ {\em equivalent}.
By definition, we have the following implications:
\begin{equation}\label{implications} 
\text {$C^0$-eq. $\Leftarrow$ bi-Lipschitz eq. 
$\Leftarrow C^1$-eq. $\Leftarrow C^2$-eq. $\Leftarrow \cdots 
\Leftarrow C^{\infty}$-eq. $ \Leftarrow C^{\omega}$-eq.}
\end{equation}
By Artin's Approximation Theorem \cite{artin},
$C^{\infty}$ equivalence implies 
$C^{\omega}$ equivalence.
But the other converse implications of \eqref{implications} do not hold.
Let $f$, $g : (\R^2,0) \to (\R,0)$ be polynomial functions
defined by
$$
f(x,y) = (x^2 + y^2)^2, \ \ g(x,y) = (x^2 + y^2)^2 + x^{r+4}
$$
for $r = 1, 2, \cdots$.
N. Kuiper \cite{kuiper} and F. Takens \cite{takens}
showed that $f$ and $g$ are $C^r$ equivalent,
but not $C^{r+1}$ equivalent. 

In the family of germs 
$$
K_t(x,y) = x^4 + t x^2 y^2 + y^4,
$$
the phenomenon of continuous $C^1$ moduli appears: for $t_1$, $t_2 \in I$, $K_{t_1}$ 
and $K_{t_2}$ are $C^1$ equivalent
if and only if $t_1 = t_2$, where $I = (- \infty , -6]$,
$[-6,-2]$ or $[-2,\infty )$, see example \ref{c1moduli} below.
On the other hand, T.-C. Kuo proved that this family is 
$C^0$-trivial over  any interval not
containing $-2$, by a $C^0$ trivialisation obtained by the integration of a vector field, c.f.  \cite{kuo1}.
In the homogeneous case, as that of $K_t$, the Kuo vector field is Lipschitz and the trivialisation is bi-Lipschitz.  
Thus Kuo's construction gives examples of bi-Lipschitz equivalent germs that are not $C^1$ equivalent.  

It is easy to construct examples of $C^0$ equivalent and bi-Lipschitz non-equivalent germs.  
Let us  note that, moreover, the bi-Lipschitz equivalence also has continuous moduli, c.f. 
 \cite{henryparusinski1, henryparusinski2}.  For instance the family 
$$
A_t(x,y) = x^3 - 3 t x y^4 + 2 y^6, \quad t>0 , 
$$
is $C^0$ trivial and  if $A_{t_1}$ is bi-Lipschitz equivalent to $A_{t_2}$, $t_1,t_2>0$, then $t_1=t_2$.


\subsection {Blow-analytic equivalence} 
Blow-analytic equivalence was proposed for 
real analytic function germs by Tzee-Char Kuo 
\cite{kuo3} as a counterpart of the topological equivalence 
of complex analytic germs.  Kuo showed in \cite{kuo8} the local finiteness (i.e. the 
absence of continuous moduli) 
of blow-analytic types for analytic families of  isolated singularities.

We say that a homeomorphism germ $h :
(\R^n,0) \to (\R^n,0)$
is  a {\em blow-analytic homeomorphism}  if there exist real modifications 
$\mu : (M,\mu^{-1}(0)) \to (\R^n,0)$, 
$\tilde \mu : (\tilde M, \tilde \mu\inv (0) )$ $ \to (\R^n,0)$
and an analytic isomorphism $\Phi : (M,\mu^{-1}(0)) \to
(\tilde M,\tilde\mu^{-1}(0))$
so that $\sigma \circ \mu =\tilde \mu \circ \Phi$.  The formal definition of real modification is somewhat technical 
and, since in this paper we consider only the two variable case, we shall use the following criterion of 
\cite {koikeparusinski2}:  in two variable case $\mu$ is a real modification if and only if 
it is a finite composition of point blowings-up.  
Finally, we say that two real analytic function germs
$f : (\R^n,0) \to (\R,0)$ and $g : (\R^n,0) \to (\R,0)$ 
are {\em blow-analytically equivalent} 
if there exists a blow-analytic homeomorphism  $\sigma : (\R^n,0) \to (\R^n,0)$
such that $f = g \circ \sigma$.  

 For instance, the family $K_t$, $t\ne -2$, becomes real analytically trivial after the blowing-up 
of the $t-$axis, cf. Kuo \cite {kuo3}.  Thus for $t<-2$, or $t>-2$ respectively, all $K_t$ are blow-analytically 
equivalent.   
Similarly, the family $A_t$ becomes real analytically trivial after a toric blowing-up in $x,y-$variables, cf. 
Fukui - Yoshinaga \cite{fukuiyoshinaga} or 
Fukui - Paunescu \cite{fukuipaunescu1}, 
and hence it is blow-analytically trivial.  Thus blow-analytic equivalence does not imply 
neither $C^r$-equivalence, 
$r \ge 1$, nor   bi-Lipschitz equivalence.

 Blow-analytic equivalence is a stronger and more natural notion than 
$C^0$ equivalence.  For instance, $f(x,y) = x^2 - y^3$, $g(x,y) = x^2 - y^5$ 
are $C^0$ equivalent, but not blow-analytically equivalent.  The latter fact 
 can be seen using the Fukui invariant \cite{fukui}, that we recall in section \ref{bilipschitz} below, 
 or it follows directly from the following theorem.

\begin{thm}\label{allequivalent}{\rm (cf. \cite{koikeparusinski2}) }
Let $f:(\R^2,0)\to (\R,0)$ and $g:(\R^2,0)\to (\R,0)$ be real analytic 
function germs.  Then the following conditions are equivalent:
\begin{enumerate}
\item
$f$ and $g$ are blow-analytically equivalent.
\item 
$f$ and $g$ have isomorphic minimal resolutions.
\item 
The real tree models of $f$ and $g$ are isomorphic.  
\end{enumerate}
\end{thm}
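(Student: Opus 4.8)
The plan is to establish the result as two equivalences, $(2)\Leftrightarrow(3)$ and $(1)\Leftrightarrow(2)$, the first being essentially a combinatorial dictionary and the second carrying the geometric content. Throughout I work with the minimal embedded resolution of $f$, that is, the shortest finite composition of point blowings-up $\pi\colon (X,\pi\inv(0))\to(\R^2,0)$ after which the total transform $(f\circ\pi)\inv(0)$ becomes a normal crossing divisor and no exceptional $(-1)$-curve can be contracted while preserving this property. I decorate the dual graph of the exceptional divisor with the multiplicities of $f\circ\pi$ along each component, the self-intersection data, and the real structure, namely which infinitely near points and which components are real. Kuo's real tree model is, by construction, the combinatorial record of the Newton--Puiseux roots of $f$ together with their contact orders and the sign behaviour of $f$ in the resulting sectors.

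For $(2)\Leftrightarrow(3)$ I would show that these two decorated combinatorial objects determine one another. Running the blow-up process against the Newton--Puiseux expansion, each characteristic contact between roots forces a blow-up at an infinitely near point, so the tree of infinitely near points, the multiplicities, and the real versus complex-conjugate splitting of the roots can be read off the minimal resolution, and conversely the resolution is reconstructed from the tree model. This step is bookkeeping: the main care is to match the real decorations recording the sign of $f$ on the tree side with the real points of the exceptional configuration on the resolution side.

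The implication $(1)\Rightarrow(2)$ uses the definition of blow-analytic equivalence directly. If $\sigma\circ\mu=\tilde\mu\circ\Phi$ with $\Phi$ an analytic isomorphism of the modifications, then from $f=g\circ\sigma$ we obtain $f\circ\mu=(g\circ\tilde\mu)\circ\Phi$, so that $\Phi$ carries the total transform of $g=0$ to that of $f=0$ and preserves multiplicities. Since in two variables every point modification can be dominated by, and completed to, the minimal resolution by blowing up or contracting the $(-1)$-curves not needed for normal crossings, and these operations are canonical, the isomorphism $\Phi$ descends to an isomorphism of the minimal resolutions respecting all the decorations, which is exactly $(2)$.

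The reverse construction $(2)\Rightarrow(1)$ is where the real work lies, and I expect it to be the main obstacle. Starting from an isomorphism of the decorated minimal resolutions, I must produce an actual analytic isomorphism $\Phi$ between them that respects the real exceptional configuration and intertwines the lifted functions, and then push $\Phi$ down through $\mu$ and $\tilde\mu$ to obtain a homeomorphism $\sigma\colon(\R^2,0)\to(\R^2,0)$ with $f=g\circ\sigma$. The delicate point is precisely that the tree model is purely combinatorial, so realising it by an honest analytic isomorphism of the resolutions demands matching the gluing data chart by chart along the exceptional divisor, with the sign and orientation information dictating the real structure. Checking that the resulting map, a priori defined only off the exceptional locus, extends continuously across the origin and is a homeomorphism of $\R^2$---and not merely of the complexification---is the crux, and is where the real tree model must be shown to carry exactly the information needed. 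Together with the dictionary $(2)\Leftrightarrow(3)$, this closes the chain.
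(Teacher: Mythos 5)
First, a point of context: the paper does not prove Theorem \ref{allequivalent} at all. The theorem is quoted from \cite{koikeparusinski2} and used as a black box (via characterisation (3)) in the proof of Theorem \ref{c1givesblow}, so your proposal can only be measured against the argument of that reference and against its own internal completeness. On that measure, your decomposition is the natural one and matches the broad structure of the cited proof: (2)$\Leftrightarrow$(3) is indeed a dictionary between the decorated dual graph and the real tree model (with the signs and the cyclic/left--right order as the genuinely real decorations), and (1)$\Rightarrow$(2) is indeed a lifting/descent argument, though even there you still owe the checks that the further blow-up centres needed to dominate $\mu$, $\tilde\mu$ by resolutions correspond under $\Phi$, and that the map induced after contracting superfluous $(-1)$-curves remains analytic.

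The genuine gap is the implication (2)$\Rightarrow$(1), which is the entire mathematical content of the theorem and which your proposal replaces by a description of its difficulty rather than an argument. Worse, the mechanism you hint at --- produce an analytic isomorphism of the \emph{minimal resolutions} intertwining $f\circ\mu$ and $g\circ\tilde\mu$ by ``matching the gluing data chart by chart'' --- is not viable as stated. Locally defined intertwinings cannot be glued in the analytic category (there are no analytic partitions of unity; contrast section \ref{bilipschitz} of the paper, where gluing by partitions of unity is exactly what makes the \emph{bi-Lipschitz} constructions work), and you give no reason why the minimal resolutions should carry such an isomorphism at all: the definition of blow-analytic equivalence deliberately permits passing to larger modifications, and a construction must exploit that freedom. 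The key phenomenon your outline never touches is what makes the statement true over $\R$ and false in the analogous complex formulation: an analytic isomorphism between neighbourhoods of the exceptional divisors restricts on each exceptional $\projc^1$ to a M\"obius transformation, so cross-ratios of the marked points (intersections with other components and with strict transforms) are obstructions --- this is precisely the moduli phenomenon visible in the families $K_t$ and $A_t$ of the introduction, which have constant resolution data but moving marked points. Over $\R$ one only needs the isomorphism near the real points, where its restriction to the exceptional $\projr^1$ is merely an analytic diffeomorphism, so these moduli evaporate; converting this flexibility into an actual construction of $\Phi$, inductively over the tree and after suitable further blow-ups, is the real content of \cite{koikeparusinski2} and is absent, even in outline, from your proposal. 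As it stands, you have sketched the bookkeeping implications and restated, rather than proved, the hard one.
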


For more on the blow-analytic equivalence in the general case $n$-dimensional 
we refer the reader to recent surveys 
 \cite{fukuikoikekuo, fukuipaunescu2}.

What is the relation between blow-analytic equivalence and $C^r$
equivalences, $1 \le r < \infty$?  
Kuo states in \cite{kuo3} that his modified analytic homeomorphism
is independent of $C^r$ diffeomorphisms, $1 \le r < \infty$, and
confirms 
his belief at the invited address of the annual convention of the
Mathematical Society of Japan, autumn 1984 (\cite{kuo7}),
by asserting that
blow-analytic equivalence is independent of $C^r$ equivalences.
Untill now it was widely believed that this is the case.


\subsection {Main results of this paper}  
The main result of this paper is the following.  

\begin{thm}\label{c1givesblow}
Let $f:(\R^2,0)\to (\R,0)$ and $g:(\R^2,0)\to (\R,0)$ be real analytic 
function germs and suppose that there exists a $C^1$ diffeomorphism germ  $\sigma : (\R^2,0)\to (\R^2,0)$ 
such that $f = g\circ \sigma$.  Then $f$ and $g$ are blow-analytically equivalent.  

If, moreover, $\sigma$ preserves orientation, then $f$ and $g$ are 
blow-analytically equivalent by an orientation preserving blow-analytic homeomorphism.  
\end{thm}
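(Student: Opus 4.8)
My plan is to reduce the statement, via the combinatorial characterisation of Theorem~\ref{allequivalent}, to an isomorphism of the real tree models of $f$ and $g$; by the implication (3)$\Rightarrow$(1) this yields the blow-analytic equivalence, and carrying orientations along gives the refinement. The real tree model is determined by three pieces of data attached to the germ: the cyclic order of the real half-branches of $f\inv(0)$ emanating from the origin, the characteristic (Puiseux) exponents of the real Newton--Puiseux roots together with all their mutual contacts, and the sign of $f$ on each sector cut out by these half-branches. Two of these are transported by $\sigma$ for soft reasons: since $f=g\circ\sigma$, the diffeomorphism carries $f\inv(0)$ onto $g\inv(0)$ and identifies the sign of $f$ on a sector with that of $g$ on its image, and being a homeomorphism (orientation preserving when $\sigma$ is) it respects the cyclic order of the half-branches. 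Everything therefore hinges on showing that $\sigma$ preserves the contacts and the characteristic exponents of the real roots.

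The metric engine is the uniform differentiability of a $C^1$ germ. Writing $A:=D\sigma(0)$, which is invertible, one has
\[
\sigma(p)-\sigma(q)=A(p-q)+o(|p-q|)\qquad(p,q\to 0),
\]
because $\sigma(p)-\sigma(q)=\big(\int_0^1 D\sigma(q+t(p-q))\,dt\big)(p-q)$ and $D\sigma$ is continuous at $0$; in particular $\sigma$ is bi-Lipschitz near $0$. The decisive point is that the error is controlled by the \emph{separation} $|p-q|$, not by $|p|$, so it cannot swamp a higher-order contact. Applying this to points $p_i\in\gamma_i$ at radius $r$ on two tangent real arcs, for which $|p_1-p_2|\sim r^{q}$ with $q$ the contact order, and using that $A$ is a linear isomorphism, gives $|\sigma(p_1)-\sigma(p_2)|\sim r^{q}$ with $|\sigma(p_i)|\asymp r$; since the exponent is insensitive to a bounded change of the radius, the images are tangent with the same contact order $q$. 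Thus the contact order of any two real arcs at $0$ is a $C^1$ invariant. In particular the mutual contacts between distinct real roots, and the top characteristic exponent of a single real root (read off from the contact between its own real half-branches, as for the two half-branches $y=\pm x^{3/2}+\cdots$ of a cusp), are preserved.

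The main obstacle is the preservation of the \emph{lower} characteristic exponents of a real root $\alpha$. These are recorded not by contacts among real arcs but by the contacts of $\alpha$ with the non-real Newton--Puiseux roots of $f$, which the real map $\sigma$ does not move. To reach them I will use a second $C^1$-invariant probe, the order of vanishing of $f$ along a real arc measured against the radius,
\[
\nu_f(\delta):=\lim_{r\to 0}\frac{\log|f(\delta(r))|}{\log r},
\]
where $\delta$ is parametrised so that $|\delta(r)|=r$. Since $f(\delta(r))=g(\sigma(\delta(r)))$ and $\sigma$ moves the radius by a bounded factor, one has $\nu_f(\delta)=\nu_g(\sigma\circ\delta)$, so $\nu$ is transported by $\sigma$. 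By the Kuo--Lu formula, $\nu_f(\delta)$ equals the sum of the radius-normalised contacts of $\delta$ with \emph{all} roots of $f$, real and complex; hence, letting a family of real arcs $\delta_s$ sweep toward $\alpha$ and watching the break points of $s\mapsto\nu_f(\delta_s)$, one detects the contact of $\alpha$ with every root and thereby recovers all its characteristic exponents from the two invariants, the contacts of real arcs and $\nu_f$. The same reconstruction applied to $g$ and matched through $\sigma$ shows that $f$ and $g$ have identical contact trees and signs, i.e. isomorphic real tree models; this isomorphism is orientation preserving when $\sigma$ is, and Theorem~\ref{allequivalent} then yields both assertions. The delicate part to be carried out with care is the justification of the Kuo--Lu formula and of the break-point analysis for merely $C^1$ images of real-analytic arcs, together with the bookkeeping that turns the matched data into an honest orientation-respecting isomorphism of trees.
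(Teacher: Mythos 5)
Your reduction to Theorem \ref{allequivalent} and your treatment of contact orders of real arcs follow the paper's strategy, but the step you yourself flag as ``delicate'' is not delicate --- it is where the argument collapses. Both of your probes are \emph{bi-Lipschitz} invariants: contact orders by your own estimate (which never uses more than bi-Lipschitz bounds), and $\nu_f$ because, as you note, $\sigma$ only needs to distort the radius by a bounded factor. By Proposition \ref{legendre}, the data of $\nu_f$ along a family of arcs sweeping toward a root $\alpha$ is exactly the order function $\ord_\alpha f$, i.e.\ the Newton polygon $\NP_\alpha f$, i.e.\ (via the Kuo--Lu factorisation) the multiset of contacts of $\alpha$ with all complex roots --- and this multiset does \emph{not} determine the characteristic exponents of $\alpha$. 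The paper's Example \ref{example2} refutes your key claim explicitly: in $f=x(x^3-y^5)(x^3+y^5)$ the real root $x=-y^{5/3}$ has six contacts all equal to $5/3$ and order function $\min(7\xi,\xi+10)$, exactly as the real root $x=0$ of $g=x(x^3-ay^5)(x^3-by^5)$, yet the former has characteristic exponent $5/3$ while the latter is smooth. Moreover $f$ and $g$ there are bi-Lipschitz equivalent by an orientation-preserving homeomorphism which carries $x=-y^{5/3}$ to $x=0$, so \emph{every} invariant on your list (cyclic order, contacts, $\nu$, sector signs) is matched, while $f$ and $g$ are not blow-analytically equivalent. A proof along your lines would apply verbatim to that map and ``prove'' blow-analytic equivalence, contradicting Section \ref{bilipschitz}; no bookkeeping can close this gap. (Even your preliminary claim that the \emph{top} characteristic exponent is visible in real contacts fails: $x^3=y^5$ has a single real demi-branch on $y\ge 0$, so its exponent $5/3$ is invisible to contacts among real arcs.)

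What is missing is precisely the place where the $C^1$ hypothesis gives strictly more than bi-Lipschitz, and it is the heart of the paper's proof. After normalising $D\sigma(0)=\id$, conjugation by a $C^1$ diffeomorphism preserves not only the Newton boundary and order function (Proposition \ref{lipinvariant2}, already a bi-Lipschitz statement) but the actual coefficient polynomials $P_{f,\gamma,\xi}$ attached to the faces (Propositions \ref{c1invariant1} and \ref{c1invariant2}, proved via Lemma \ref{lemma1}). Only then does the arithmetic Lemma \ref{Puiseux} enter: for a root, the polynomial taken along the truncation has the symmetric form $P_0(z)=z^k\tilde P_0(z^d)$, so writing $P(z)=P_0(z+a_\xi)=A_0z^{i_0}+A_1z^{i_0-1}+\cdots$ one recovers the would-be characteristic coefficient $a_\xi=A_1/(i_0A_0)$ from the top two coefficients of a $C^1$-invariant object; this is what makes characteristic exponents and the signs of their coefficients $C^1$ invariants (Propositions \ref{Puiseuxinvariance1} and \ref{Puiseuxinvariance2}), and combined with Propositions \ref{lipinvariant2} and \ref{conversion} it yields the tree isomorphism. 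Your uniform differentiability estimate $\sigma(p)-\sigma(q)=A(p-q)+o(|p-q|)$ is the right raw material, but you only feed it into contact orders; to repair the proof you must upgrade it to the statement that $\sigma$ preserves the coefficients on the Newton boundary, not merely the boundary itself.
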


This gives a negative answer to the above conjecture of Kuo.
To give the reader some flavour of this unexpected result we propose the following special 
case that contains most of the difficulty of the proof and does not refer to blow-analytic equivalence.  
Recall that a Newton-Puiseux root of $f(x,y)=0$ is a real analytic arc $\gamma \subset f\inv (0)$ 
parameterised by 
\begin{equation*}
\gamma : x = \lambda (y) = a_1 y^{n_1 / N} + a_2 y^{n_2 / N}
+ \cdots , 
\end{equation*}
where $\lambda(y)$ is a convergent fractional power series $\lambda (y) = a_1 y^{n_1 / N} + a_2 y^{n_2 / N}
+ \cdots $.  We shall always assume that $n_1/N \ge 1$, that is $\gamma$ is transverse to the 
$x$-axis.  We shall call $\gamma$ real if all $a_i$ are real for $y\ge 0$, and then we understand 
$\gamma$ as such real demi-branch of an analytic arc, with the parametrisation restricted to $y\ge 0$.   

\begin{prop} {\rm  (cf. proposition \ref{Puiseuxinvariance1} below)}\\
Let $\sigma : (\R^2,0) \to (\R^2,0)$ be a $C^1$ diffeomorphism 
and let $f$, $g : (\R^2,0) \to (\R,0)$ be real analytic function germs
such that $f = g \circ \sigma$. Suppose that $\gamma \subset f\inv (0)$ and  $\tilde{\gamma}\subset g\inv (0)$
be Newton-Puiseux roots of $f$ and $g$ respectively  such that  $\sigma (\gamma) = \tilde \gamma$ as set germs.  
Then the Puiseux characteristic pairs of $\gamma$ and 
$\tilde \gamma$ coincide.
\end{prop}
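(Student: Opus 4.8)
The plan is to reduce the whole statement to one robust fact --- that a $C^1$ diffeomorphism preserves the order of contact between real analytic demi-branches --- and then to read the Puiseux characteristic pairs off such contact data. First I would normalize. Since $\sigma$ is differentiable at $0$ with invertible derivative $L=d\sigma_0$, and since composing $f,g$ with linear isomorphisms is analytic and changes nothing, I may assume the chosen parametrisations are transverse to the $x$-axis, as in the standing hypothesis $n_1/N\ge 1$. The tangent line of $\gamma$ itself plays no role: the first characteristic exponent is the first \emph{fractional} exponent of $\lambda$, so the integer-power part of $\lambda$, which $L$ may well distort, is irrelevant to the pairs.

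The core step is the contact lemma. Let $\gamma_1:x=\lambda_1(y)$ and $\gamma_2:x=\lambda_2(y)$ be real analytic demi-branches, both transverse to the $x$-axis, with $\lambda_1(y)-\lambda_2(y)=b\,y^{\kappa}+\cdots$, so $\operatorname{cont}(\gamma_1,\gamma_2)=\kappa$. Parametrising both by the common parameter $y\ge 0$ and writing the $C^1$ increment with the mean value theorem,
\[
\sigma(\gamma_1(y))-\sigma(\gamma_2(y))=\big(L+R(y)\big)\big(\gamma_1(y)-\gamma_2(y)\big),\qquad \|R(y)\|\to 0,
\]
the error being $o$ of the increment because the segment joining $\gamma_1(y)$ and $\gamma_2(y)$ shrinks to $0$ and $d\sigma$ is continuous. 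Since $\gamma_1-\gamma_2=(\lambda_1(y)-\lambda_2(y),0)$ points exactly along $e_1=(1,0)$, this gives $\sigma\gamma_1-\sigma\gamma_2=b\,y^{\kappa}Le_1+o(y^{\kappa})$. Reparametrising the analytic image arcs $\tilde\gamma_i=\sigma\gamma_i$ by the target transverse coordinate $Y=[\sigma\gamma_1(y)]_2$ (a germ of $C^1$ diffeomorphism of the parameter, invertible since $\tilde\gamma_1$ is transverse to the $x$-axis) only rescales the leading exponent, so the separation is still of order $Y^{\kappa}$. Finally the transverse component does not degenerate: the common tangent of the $\tilde\gamma_i$ is $Lv$, where $v$ is the common tangent of the $\gamma_i$, and $e_1$ is never parallel to $v$ (the latter has nonzero second coordinate, $e_1$ does not), so $Le_1$ is transverse to $Lv$. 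Hence $\operatorname{cont}(\tilde\gamma_1,\tilde\gamma_2)=\kappa$, and applying the same computation to $\sigma\inv$ shows $\sigma$ induces a contact-preserving correspondence between analytic arcs.

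It remains to recover the characteristic pairs of $\gamma$ from contact orders that $\sigma$ genuinely transports. I would \emph{not} compare $\gamma$ with its smooth truncations alone: being only $C^1$, $\sigma$ can manufacture spurious fractional asymptotics out of a smooth arc, so $\operatorname{cont}(\tilde\gamma,\sigma\alpha)=\beta_1$ does not by itself force the first fractional exponent of $\tilde\gamma$ to equal $\beta_1$. Instead I would realise every characteristic exponent as a contact order between \emph{genuine analytic arcs attached to $f$}, whose images are controlled: the natural candidates are the branches of a generic polar curve of $f$, whose contact orders with $\gamma$ are the polar invariants and hence determine the Puiseux pairs. Here the hypothesis $f=g\circ\sigma$ enters decisively through $\nabla f=(D\sigma)^{T}\,(\nabla g\circ\sigma)$: this identity forces $\sigma$ to carry the polar/gradient structure of $f$ to that of $g$ up to the invertible factor $D\sigma$, matching the polar arcs, and combined with the contact lemma the corresponding polar invariants, and with them the characteristic pairs of $\gamma$ and $\tilde\gamma$, coincide. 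The orientation statement is then immediate, the orientation of $\sigma$ being the sign of $\det L$.

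The step I expect to be the main obstacle is exactly this last one: pinning down the pairs of a \emph{single} branch. Contact with arbitrary comparison arcs is a $C^1$-invariant but is blind to the pairs, so the argument must produce, at every characteristic level, analytic arcs whose $\sigma$-images are again analytic (or analytic enough to read off the next exponent), and this is precisely where the analyticity of both $f$ and $g$ together with $f=g\circ\sigma$ must be exploited. Equivalently, it is the point at which a naive induction by blowing up breaks down, since $C^1$ regularity is not preserved under point blowings-up; overcoming this --- by the polar/gradient comparison above or by an inductive peeling of characteristic pairs that re-exploits $f=g\circ\sigma$ at each stage --- is the heart of the matter.
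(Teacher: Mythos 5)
Your contact lemma is correct (the mean-value/integral form of the $C^1$ increment does show that $\sigma$ preserves the metric order of contact between demi-branches), and you rightly observe that contact orders alone are blind to the Puiseux pairs of a single branch. But the decisive step of your argument --- realising each characteristic exponent as a polar invariant and transporting it by $\sigma$ --- has a genuine gap, in two places. First, the identity $\nabla f(p) = (D\sigma(p))^{T}\,\nabla g(\sigma(p))$ does \emph{not} match polar curves: it gives $\partial f/\partial x\,(p) = \langle \nabla g(\sigma(p)),\, D\sigma(p)e_1\rangle$, so $\sigma$ carries $\{\partial f/\partial x=0\}$ onto the zero set of a directional derivative of $g$ taken along the \emph{pointwise-varying} direction $D\sigma(p)e_1$, which is merely continuous in $p$; this is not a polar curve of $g$. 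The best one can hope for is that the image of a polar branch of $f$ lies in a horn neighbourhood of a polar branch of $g$, and establishing even that containment is the content of the bi-Lipschitz machinery of \cite{henryparusinski1, henryparusinski2} (the sets $U_\varepsilon(f)$, used in this paper only in the proof of proposition~\ref{weighted}); it is not a consequence of the chain rule "decisively" or otherwise. Second, even granting matched polar branches with preserved contact orders, the inference from polar invariants to Puiseux pairs is a complex-analytic theorem; over $\R$ the generic polar curve may have non-real branches invisible to your argument (Rolle only produces real polar branches between pairs of real roots), and the $\sigma$-images of the real polar branches are in any case only $C^1$ curves, so a fractional metric contact order with them does not force fractional exponents of $\tilde\gamma$ --- exactly the trap you yourself flagged for truncations. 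So the step you identify as "the heart of the matter" is indeed where the proof is missing.

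For comparison, the paper closes this gap by a completely different, non-geometric mechanism. After the same linear normalisation $D\sigma(0)=\mathrm{Id}$, it proves (proposition~\ref{c1invariant1}, via the elementary estimate of lemma~\ref{lemma1}) that the whole relative Newton polynomial $P_{f,\gamma,\xi}$ from the expansion \eqref{genericarc} equals $P_{g,\tilde\gamma,\xi}$, for every $\xi$. Then an arithmetic lemma (lemma~\ref{Puiseux}) exploits the known denominators below level $\xi$: writing $\gamma_\xi$ for the truncation and $P_0=P_{f,\gamma_\xi,\xi}$, one has $P_0(z)=z^k\tilde P_0(z^d)$, hence $P(z)=P_0(z+a_\xi)=A_0z^{i_0}+A_1z^{i_0-1}+\cdots$ determines the coefficient $a_\xi = A_1/(i_0A_0)$ provided $P$ is non-constant --- and $P$ is non-constant precisely because $\gamma\subset f^{-1}(0)$ forces $P(0)=0$. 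This is where the hypotheses "root" and $f=g\circ\sigma$ enter, and induction over the characteristic exponents finishes the proof. In short: where you look for analytic arcs attached to $f$ whose images are controlled, the paper instead extracts the characteristic coefficient algebraically from a quantity ($P_{f,\gamma,\xi}$) that it has already shown to be a $C^1$ invariant.
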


This property has no obvious counterpart in the complex set-up.  The Puiseux pairs of plane curve 
singularities are embedded topological invariants, cf. \cite
{zariski1}.  
 We can not dream of any 
similar statement in the real analytic set-up, all real analytic demi-branches are $C^1$ equivalent 
to the positive $y-$axis.  In the proof of proposition  \ref{Puiseuxinvariance1} we use two basic assumptions,  
the arcs are  roots and $\sigma$ conjugates the analytic functions defining the roots: $f = g \circ \sigma$.  

There is another major difference to the complex case.  
The topological type of a complex analytic function germ 
can be combinatorially characterised in terms of the tree model of  \cite{kuolu}, that encodes the 
contact orders between different Newton-Puiseux roots, that give, in particular, the Puiseux pairs of those roots.  
This is no longer true in real words,  the Puiseux pairs cannot be read from these contact orders, see 
\cite{koikeparusinski2}.  

It was speculated for a long time that there is a relation between blow-analytic and bi-Lipschitz 
properties.  It is not difficult to construct examples showing that 
\begin{equation*}
\text {  blow-analytic-eq. $\not \Rightarrow$ bi-Lipschitz eq ,}
\end{equation*}
 as the example $A_t$ above.  
In this paper we construct several examples showing that  
\begin{equation*}
\text {  blow-analytic-eq. $\not \Leftarrow$ bi-Lipschitz eq .}
\end{equation*}
Thus,  there is no direct relation between these two notions.  
Nevertheless, as shown in  \cite{koikeparusinski2},  a blow-analytic homeomorphism that gives 
blow-analytic equivalence between two  2-variable real analytic function germs, preserves the order of contact 
between {\bf non}-parameterised real analytic arcs.   Note that by the curve selection lemma, a subanalytic 
homeomorphism is bi-Lipschitz if and only if it preserves the order of contact between parameterised real 
analytic arcs.   

For more than two variables we have another phenomenon.  
Let $f_t : (\R^3,0) \to  (\R,0)$, $t \in \R$, be the Brianc\c{o}n-Speder 
family
defined by $f_t(x,y,z) = z^5 + tzy^6 + y^7x + x^{15}$.
Although $f_0$ and $f_{-1}$ are blow-analytically equivalent,
any blow-analytic homeomorphism that gives the blow-analytic
equivalence between them does not preserve the order of contact
between some analytic arcs contained in $f_0^{-1}(0)$,  cf. \cite{koike}.


\subsection {Organisation of  this paper}  
In section \ref{invariantbi-L} we construct new invariants of
bi-Lipschitz and
 $C^1$ equivalences. 
These invariants can be nicely described in terms of the Newton polygon relative to a curve, the notion 
introduced in \cite{kuoparusinski1},
 though the reader can follow an alternative 
way  that uses equivalent notions:  the order function and associated polynomials.   
Shortly speaking,  if $f=g\circ \sigma$  with 
$\sigma$ bi-Lipschitz then the Newton boundaries of $f$ relative to an arc $\gamma$ coincides 
with the Newton boundary of $g$ relative to $\sigma(\gamma)$.   If $\sigma$ is $C^1$ and $D\sigma(0)=Id$ 
then, moreover, the corresponding coefficients on the Newton boundaries are identical.  As a direct corollary 
we get the $C^1$ invariance of Puiseux pairs of the Newton-Puiseux roots, see  proposition  \ref{Puiseuxinvariance1}.

In section \ref{$C^1$} we show theorem \ref{c1givesblow}.  The proof is based on theorem \ref{allequivalent} 
so we recall in this section the construction of real tree model.  

In section \ref{arbitraryc1} we extend the  construction of section \ref{invariantbi-L}  to all $C^1$ diffeomorphisms 
(we drop the assumption $D\sigma(0)=Id$).    As a corollary we give a complete classification of 
$C^1$ equivalent weighted homogeneous germs of two variables.  

Section \ref{bilipschitz} contains the construction of examples of  bi-Lipschitz equivalent and blow-analytically 
non-equivalent germs.  This is not simple since such a bi-Lipschitz equivalence cannot be natural.  
Let us first recall  the construction of invariants of bi-Lipschitz 
equivalence of \cite{henryparusinski1, henryparusinski2}.  Suppose that the generic polar curve of $f(x,y)$ has at least two branches $\gamma_i$.  Fix reasonable 
parametrisations of these branches, either by a coordonate as $x=\lambda_i(y)$ or by the distance to 
the origin, and expand $f$ along each such branch.  Suppose that 
the expansions along different branches $f(\lambda_i(y),y) 
= a_i y^s + \cdots$ 
have the same leading exponent $s$, and that the term $y^s$ is sufficiently big  
in comparison to the distance between the branches.  Then the ratio of the 
leading coefficients $a_i/a_j$ is a bi-Lipschitz invariant (and a continuous modulus).  
Our construction of bi-Lipschitz homeomorphism goes along these lines but in the opposite direction.  
First we choose carefully $f(x,y)$, $g(x,y)$  so that such the expansions of $f$, resp. $g$, along polar branches 
are compatible,  so that 
we write down explicitly bi-Lipschitz equivalences between horn neighbourhoods of polar curves of $f$ and $g$,
respectively.  
Then we show that, in our examples, these equivalences can be glued together using partition of unity.   


\subsection {Observations}  
We shall use freely the following widely known facts.  
In the general $n$-variable case, 
the multiplicity of an analytic function germ is a bi-Lipschitz
invariant.  For the $C^1$ equivalence  the initial homogeneous
form, up to linear equivalence, is an invariant.
Indeed, for real analytic functions not identically zero, we have

\begin {lem}\label{initialform}
Let $f$, $g : (\R^n,0) \to (\R,0)$ be analytic function germs
of the form

\vspace{3mm}

\centerline{$f(x) = f_m(x) + f_{m+1}(x) + \cdots ,
\ \ f_m \not\equiv 0$,}

\vspace{2mm}

\centerline{$g(x) = g_k(x) + g_{k+1}(x) + \cdots ,
\ \ g_k \not\equiv 0$.}

\vspace{3mm}

\noindent Suppose that $f$ and $g$ are $C^1$-equivalent.
Then $k = m$ and $f_m$ and $g_m$ are linearly equivalent.

In particular, if homogeneous polynomial functions are 
$C^1$-equivalent, then they are linearly equivalent.
\end{lem}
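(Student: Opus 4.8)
The plan is to recover each homogeneous initial form as a radial limit and to transport these limits through the equation $f = g \circ \sigma$ using only the differentiability of $\sigma$ at the origin. First I would set $L := D\sigma(0)$, which is invertible since $\sigma$ is a $C^1$ diffeomorphism germ. For any real analytic $h(x) = h_p(x) + h_{p+1}(x) + \cdots$ with $h_p \not\equiv 0$ and any $v$ in the unit sphere $S^{n-1}$, homogeneity gives $h(tv) = \sum_{j \ge p} t^j h_j(v)$ for small $t>0$, so that
$$ \lim_{t \to 0^+} t^{-p} h(tv) = h_p(v). $$
Thus the initial form is determined by these radial limits, and the idea is to evaluate the corresponding limits for $f$ through the right-hand side $g \circ \sigma$.

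The key computation comes next. Since $\sigma$ is differentiable at $0$, one may write $\sigma(tv) = t\,u_t$ with $u_t = Lv + o(1) \to Lv$ as $t \to 0^+$; this uses differentiability at the single point $0$ only, which is exactly what the $C^1$ hypothesis supplies. Substituting into the convergent expansion $g(y) = \sum_{j \ge k} g_j(y)$ and using that each $g_j$ is homogeneous of degree $j$, I obtain
$$ t^{-k} f(tv) = t^{-k} g(t\,u_t) = g_k(u_t) + t \sum_{j > k} t^{\,j-k-1} g_j(u_t). $$
The tail is bounded as $t \to 0^+$ because $u_t$ stays bounded and the series converges, so the entire correction term is $O(t)$; together with $u_t \to Lv$ and continuity of $g_k$ this yields $\lim_{t \to 0^+} t^{-k} f(tv) = g_k(Lv)$ for every $v \in S^{n-1}$. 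Verifying this convergence of the power-series tail under the perturbed substitution $y = t\,u_t$ is the only point requiring a little care, and it is where I expect the main (mild) technical obstacle to lie.

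Finally I would compare the two evaluations of the radial limit. On one side $t^{-k} f(tv) = t^{m-k} f_m(v) + O(t^{m-k+1})$. If $k < m$ this limit is $0$ for all $v$, forcing $g_k \circ L \equiv 0$ and hence $g_k \equiv 0$ (as $L$ is invertible), a contradiction; if $k > m$ the limit is infinite whenever $f_m(v) \neq 0$, contradicting the finiteness of $g_k(Lv)$. Therefore $k = m$, and the two computations of the limit give $f_m(v) = g_m(Lv)$ for all $v \in S^{n-1}$, hence $f_m = g_m \circ L$ by homogeneity. Thus $f_m$ and $g_m$ are linearly equivalent via $L = D\sigma(0)$, and the concluding assertion for homogeneous polynomials is the special case $f = f_m$, $g = g_m$.
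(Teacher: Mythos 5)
Your proof is correct. Its core mechanism is the same as the paper's: linearize $\sigma$ at the origin and push the expansion of $g$ through it, comparing lowest-degree terms — the paper writes $\sigma(x) = A(x) + h(x)$ with $h(x) = o(|x|)$ and reads off $f_m(x) = g_m(A(x))$ from $f(x) = g(\sigma(x)) = g_m(A(x)) + G(x)$, $G(x) = o(|x|^m)$; your radial limits $\lim_{t\to 0^+} t^{-m} f(tv) = g_m(Lv)$ are a pointwise, rigorous rendering of exactly this comparison. The genuine difference lies in how $k = m$ is obtained. The paper disposes of it in one line by invoking the bi-Lipschitz invariance of multiplicity (a fact it records just before the lemma), whereas you extract $k = m$ from the same radial-limit computation via the two contradiction cases: $k < m$ forces $g_k \circ L \equiv 0$, hence $g_k \equiv 0$ by invertibility of $L$; $k > m$ forces an infinite limit where $f_m(v) \neq 0$. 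This buys you a fully self-contained argument that uses only differentiability of $\sigma$ at the single point $0$ together with invertibility of $D\sigma(0)$ — strictly less than the $C^1$ or bi-Lipschitz structure the paper leans on — at the cost of a slightly longer case analysis. Your one flagged technical point, the boundedness of the tail $\sum_{j>k} t^{j-k-1} g_j(u_t)$, is indeed the only place needing care, and it is handled by the standard estimate $\sup_{|y|\le 1} |g_j(y)| \le C\rho^{-j}$ coming from convergence of the power series; the paper leaves the corresponding claim $G(x) = o(|x|^m)$ equally implicit, so there is no gap on either side.
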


\begin {proof}
Since $C^1$-equivalence is a bi-Lipschitz equivalence,
$m = k$.
Let $\sigma (x) = (\sigma_1(x), \cdots , \sigma_n(x))$
be a local $C^1$ diffeomorphism such that $f = g \circ \sigma$.
Let us write
$$
\sigma_i (x) = \sum_{j=1}^n a_{ij} x_j + h_i(x)
$$
where $j^1 h_i(0) = 0$, $i = 1, \cdots , n$.
Set $A(x) = (\sum_{j=1}^n a_{1j} x_j, \cdots , 
\sum_{j=1}^n a_{nj} x_j)$.
Since $\sigma$ is a local $C^1$ diffeomorphism, $A$ is a
linear transformation of $\R^n$.
Now we have
$$
f_m(x) + f_{m+1}(x) + \cdots = f(x) = g(\sigma (x))
= g_m(A(x)) + G(x)
$$
where $f_{m+1}(x) + \cdots = o(|x|^m)$ and $G(x) = o(|x|^m)$.
Therefore we have $f_m(x) = g_m(A(x))$.
Namely, $f_m$ and $g_m$ are linearly equivalent.
\end{proof}

\begin{example}\label{c1moduli}
Let $f_t : (\R^2,0) \to (\R,0)$, $t \in \R$, be a polynomial
function defined by
$$
f_t(x,y) = x^4 + t x^2 y^2 + y^4.
$$
By an elementary calculation, we can see that there are
$a$, $b$, $c$, $d \in \R$ with $ad - bc \ne 0$ such that
$$
(ax + by)^4 + t_1(ax + by)^2(cx + dy)^2 + (cx + dy)^4
= x^4 + t_2x^2y^2 + y^4
$$
if and only if $t_1 = t_2$ or $(t_1 + 2)(t_2 + 2) = 16$.
Therefore it follows from lemma \ref{initialform} that
$f_{t_1}$ and $f_{t_2}$, $t_1$, $t_2 \in \R$, are $C^1$-equivalent
if and only if $t_1 = t_2$ or $(t_1 + 2)(t_2 + 2) = 16$. 
\end{example}





\bigskip
\section{Construction of  bi-Lipschitz and $C^1$ invariants}
\label{invariantbi-L}
\medskip

Let $f(x,y)$ be a real analytic two variable function germ: 
\begin{equation}\label{expansion}
f(x,y) = f_m(x,y) + f_{m+1}(x,y) + \cdots , 
\end{equation}
where $f_j$ denotes the $j$-th homogeneous form of $f$.   
We say that $f$ is \emph{mini-regular in $x$} 
if  $ f_m(1,0) \ne 0$.  
Unless otherwise specified we shall always assume that 
 the real analytic function germs  are mini-regular in $x$.

We shall consider the demi-branches of real analytic arcs at   
$0\in \R^2$ of the following form
\begin{equation*}
\gamma : x = \lambda (y) = a_1 y^{n_1 / N} + a_2 y^{n_2 / N}
+ \cdots , \quad y\ge 0,
\end{equation*}
where $\lambda (y)$ is a convergent fractional power series, $N$ and $ n_1 < n_2 < \cdots$ 
are positive integers having no common divisor, $a_i\in \R$, $N, n_i \in \N$.  
We shall call such a 
demi-branch  \emph{allowable}  if  $n_1/N \ge 1$, that is $\gamma$ is transverse to the 
$x$-axis.

Given $f$ and $\gamma$ as above.  We define
 {\em the order function of $f$ relative to}
$\gamma$, $\ord_{\gamma}f : [1,\infty ) \to \R$ as follows.  Fix  $\xi \ge 1$ and expand 
\begin{equation}\label{genericarc}
f(\lambda (y)+zy^{\xi},y) 
= P_{f,\gamma,\xi} (z) y^{\ord_{\gamma}f(\xi )} + \cdots  , 
\end{equation}
where the dots denote higher order terms in $y$ and  $\ord _{\gamma}f(\xi)$ 
is the smallest exponent with non-zero coefficient.  
This coefficient,  
$P_{f,\gamma,\xi} (z)$,  is a  polynomial function of $z$.

{\em By the Newton polygon of $f$ relative
to} $\gamma$, denoted by $NP_{\gamma}f$, we mean 
the Newton polygon of $f(X+\lambda (Y),Y)$
(cf. \cite{kuoparusinski1}).
Its boundary, called the {\em Newton boundary} and 
denoted by $NB_{\gamma}f$,  is the union of compact faces of $NP_{\gamma}f$.

\begin{rem}
Both the Newton boundary $NB_{\gamma}f$ and the order function  
$\ord_{\gamma}f : [1,\infty ) \to \R$ 
depend only on $f$ and on the demi-branch $\gamma$ considered as a set
germ at the origin.  They are independent of the choice of local
coordinate system, 
as long as $f$ is mini-regular in $x$ and $\gamma$ is 
allowable.  This follows from  corollary \ref{lipinvariant1}.  

As for $P_{f,\gamma,\xi}$, it depends on the choice of coordinate
system,
 but only on its linear part, 
see corollary \ref{c1invariant1} and 
proposition \ref{c1general} below.   
\end{rem}



\begin{prop}\label{legendre}
The Newton boundary $NB_{\gamma}f$ determines the
order function $\ord_{\gamma}f$ and vice versa.
More precisely, let $\varphi : (0,m] \to [0, \infty ]$ be the 
piecewise linear function whose graph $y = \varphi (x)$ is $NB_{\gamma}f$,
then we have 
\begin{eqnarray*}
& \varphi(x) & = 
\max_{\xi} \ (\ord_{\gamma}f(\xi ) - \xi x) \ \ \
(Legendre \ transform), \\
& \ord_{\gamma}f(\xi ) & = 
\min_{x} \ (\varphi (x) + \xi x) \ \ \
(inverse \  Legendre \ transform).
\end{eqnarray*}  
\end{prop}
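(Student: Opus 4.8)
The plan is to establish the two displayed formulas as a standard instance of Legendre duality between the piecewise-linear Newton boundary and the order function. The key point is that both objects encode the same combinatorial data, namely the vertices and edges of $NP_\gamma f$, the Newton polygon of $F(X,Y):=f(X+\lambda(Y),Y)$. Writing $F(X,Y)=\sum_{(p,q)} c_{pq} X^p Y^q$, its Newton polygon has support $\{(p,q): c_{pq}\neq 0\}$, and the Newton boundary $NB_\gamma f$ is the graph $y=\varphi(x)$ of the lower-left convex envelope over the range $(0,m]$. I would first make precise the substitution that converts the order function into an evaluation on this support: setting $X=zy^\xi$ in $F$ gives $F(zy^\xi,y)=\sum_{(p,q)} c_{pq} z^p y^{p\xi+q}$, so the exponent of $y$ attached to the monomial $(p,q)$ is exactly $p\xi+q$. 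Hence $\ord_\gamma f(\xi)$, the smallest power of $y$ with nonzero coefficient in \eqref{genericarc}, equals $\min_{(p,q)\in\supp F}(p\xi+q)$, the minimum being taken over lattice points on the support.

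Next I would reduce this lattice minimum to the continuous minimum over the Newton boundary. Since $\xi\geq 1>0$, the linear functional $(x,y)\mapsto \xi x+y$ is increasing in both coordinates, so its minimum over the support of $F$ is attained on the lower-left boundary, i.e. on the graph of $\varphi$. Thus
\begin{equation*}
\ord_\gamma f(\xi)=\min_{0<x\le m}(\varphi(x)+\xi x),
\end{equation*}
which is precisely the claimed inverse Legendre transform. The identity $\varphi(x)=\max_\xi(\ord_\gamma f(\xi)-\xi x)$ then follows by the general biduality theorem for the Legendre transform of a convex piecewise-linear function: since $\varphi$ is convex (being the lower envelope of the convex polygon $NP_\gamma f$) and decreasing on $(0,m]$, the Legendre transform is involutive on this class, so applying the transform twice recovers $\varphi$. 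I would verify biduality directly by the min-max inequality $\varphi(x)\ge \ord_\gamma f(\xi)-\xi x$ for all $\xi$ (immediate from the formula for $\ord_\gamma f$), giving one inequality, and then checking equality is attained at the $\xi$ equal to the slope (with sign) of the face of $NB_\gamma f$ over $x$.

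The main obstacle, and the step requiring the most care, is handling the endpoints and the precise range of the variables so that the correspondence between the lattice-point support and the continuous boundary is exact rather than approximate. Two subtleties must be addressed: first, one must confirm that the vertices of $NP_\gamma f$ have integer coordinates so that the minimum over the support is genuinely attained at a boundary vertex and coincides with the continuous minimum over $\varphi$; since $\gamma$ is allowable and $f$ is mini-regular in $x$, the leftmost vertex sits at height $0$ over $x=m$ while the lowest vertex determines the behaviour as $\xi\to\infty$, and I would record these as the boundary data pinning down the domain $(0,m]$ of $\varphi$. Second, because $\xi$ ranges only over $[1,\infty)$ rather than all of $\R$, I must check that the restricted range still sees every edge of $NB_\gamma f$: this is guaranteed by the allowability condition $n_1/N\ge 1$, which forces all relevant edge slopes to correspond to $\xi\ge 1$, so no face of the boundary is missed. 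Once these endpoint and range conditions are pinned down, the two Legendre formulas follow formally from the computation of $\ord_\gamma f(\xi)$ as a support minimum together with convex biduality.
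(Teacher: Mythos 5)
Your proposal is correct and follows essentially the same route as the paper's (much terser) proof: expand $f(X+\lambda(Y),Y)=\sum_{i,j} c_{ij}X^iY^j$, observe that $\ord_{\gamma}f(\xi)=\min\{i\xi+j:\ c_{ij}\ne 0\}=\min_x(\varphi(x)+\xi x)$, and deduce the first formula from the second by convex duality, which you flesh out where the paper merely says it follows. One minor correction: the vertices of $NP_{\gamma}f$ need not have integer coordinates (the $Y$-exponents lie in $\frac{1}{N}\Z$ because $\lambda$ is a fractional power series, as the paper itself notes for $\inp_{\gamma}f$), but your argument only uses discreteness of the support together with the fact that all boundary slopes are $\le -1$ (forced by mini-regularity and allowability), so nothing breaks.
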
 

\begin {proof}  
Let $f(X+\lambda (Y),Y) = \sum_{i,j} c_{ij} X^i Y^j$.
Then, see the picture below,  
$$
\ord_{\gamma} f(\xi ) = \min_{i,j} \{ j + i\xi ; 
\ c_{i,j} \ne 0 \} = \min_x \{ \varphi (x) + x\xi \}, 
$$
that shows the second formula.  

\vspace{3mm}
\epsfxsize=4cm
\epsfysize=3.5cm
\begin{equation*} 
\epsfbox{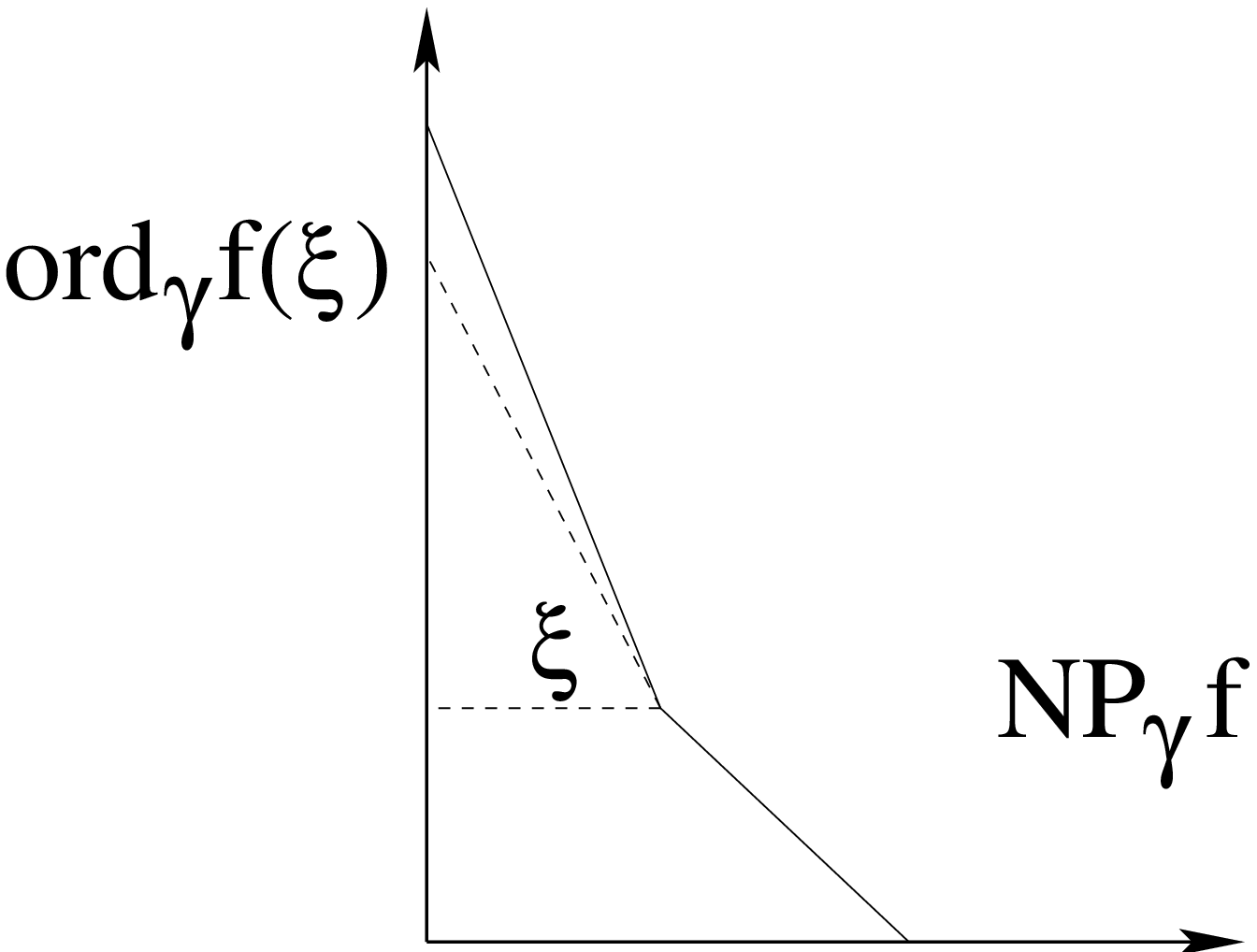}\end{equation*}

\noindent 
The first formula follows from the second one.
\end{proof}

The following example illustrates the meaning of proposition \ref{legendre}.

\begin{example}\label{orderfunction}
Let $f(x,y) = x^2 - y^3$, and let $\gamma_1 : x = y^{\frac 3 2}$
and $\gamma_2 : x = y^{\frac 3 2} + y^{\frac 5 2}$.
Then 
\begin{eqnarray*}
& & f(y^{\frac 3 2} + z y^{\xi},y) 
= 2z y^{{\frac 3 2}+\xi} +  z^2y^{2\xi}, \\ 
& & f(y^{\frac 3 2} + y^{\frac 5 2} + z y^{\xi},y) 
= 2y^4 + y^5 + 2z y^{{\frac 3 2}+\xi} + 2z y^{{\frac 5 2}+\xi} +  z^2y^{2\xi} . 
\end{eqnarray*}
Therefore the order functions of $\gamma_1$ and $\gamma_2$ are given by 
\begin{equation*}
   \ord_{\gamma_1} f(\xi ) = \ \begin{cases}
                     \ 2\xi & \text{for $1 \le \xi \le {\frac 3 2}$} \\
                     \ {\frac 3 2} + \xi  & \text{for $\xi \ge {\frac 3 2}$}
            \end{cases} 
\end{equation*}

\begin{equation*}
   \ord_{\gamma_2} f(\xi ) = \ \begin{cases}
                       \ 2\xi & \text{for $1 \le \xi \le {\frac 3 2}$} \\
                       \ {\frac 3 2} + \xi & \text{for ${\frac 3 2} \le \xi \le {\frac 5 2}$} \\
                       \ 4 & \text{for $\xi \ge {\frac 5 2}.$}
         \end{cases} 
\end{equation*}

\vspace{4mm}
\epsfxsize=12cm
\epsfysize=3.5cm
\begin{equation*} 
\epsfbox{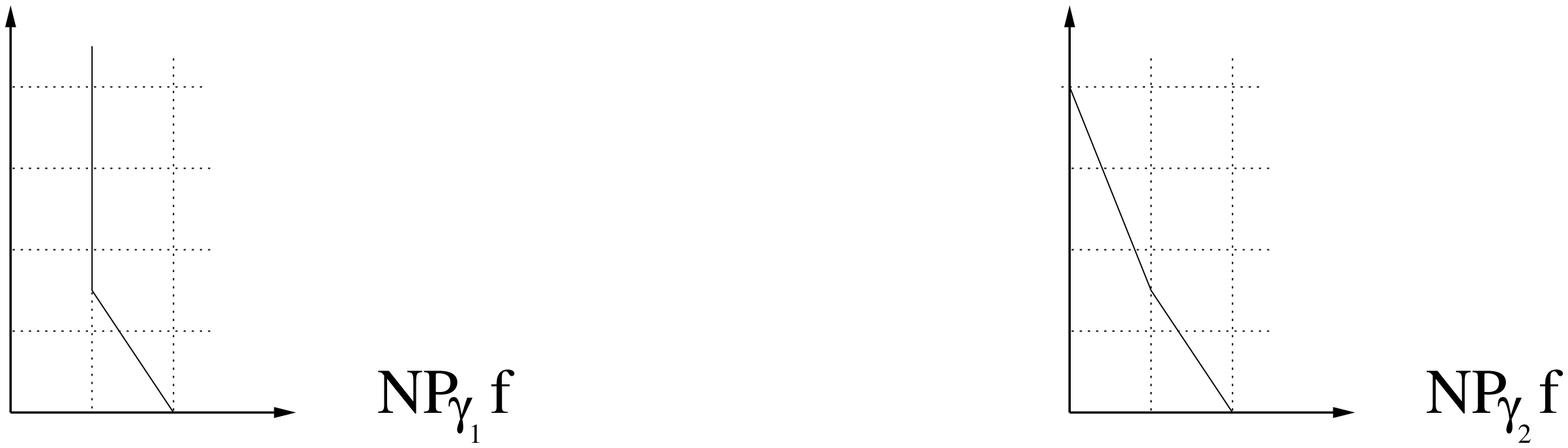}\end{equation*}

Next  we compute the Newton boundaries: $f(X+Y^{\frac 3 2},Y) = X^2 + 2XY^{\frac 3 2}$
and $f(X+Y^{\frac 3 2}+Y^{\frac 5 2},Y) = X^2 + 2XY^{\frac 3 2}
+ 2XY^{\frac 5 2} + 2Y^4 + Y^5$.  Therefore 

\begin{equation*}
   \varphi_1 (x) = \ \begin{cases}
                     \ \infty & \text{for $0 < x < 1$} \\
                     \ 3 - {\frac 3 2}x & \text{for $1 \le x \le 2$}
            \end{cases} 
\end{equation*}

\begin{equation*}
   \varphi_2 (x) = \ \begin{cases}
                     \ 4 - {\frac 5 2}x & \text{for $0 < x < 1$} \\
                     \ 3 - {\frac 3 2}x & \text{for $1 \le x \le 2.$}
            \end{cases} 
\end{equation*}

\end{example}
\medskip

The Newton boundary and the order function give rise to some invariants of bi-Lipschitz and 
$C^1$ equivalences of two variable real analytic function germs.  
We shall introduce them below.  For the 
$C^1$ equivalence we first consider the equivalence given by $C^1$-diffeomorphisms 
$\sigma : (\R^2,0) \to (\R^2,0)$ with $D\sigma (0) =\id$.  The general case will 
be treated in section \ref{arbitraryc1}.   

For  an allowable real analytic demi-branch $\gamma: x=\lambda (y)$ we define  
the {\em horn-neighbourhood of $\gamma$ 
with exponent} $\xi \ge 1$ {\em and width} $N > 0$ by
$$
H_{\xi}(\gamma ;N) := \{ (x,y); \ |x - \lambda (y)| \le
N|y|^{\xi}, y>0 \} .
$$

\medskip
\begin{prop}\label{lipinvariant2}
Let $\sigma : (\R^2,0) \to (\R^2,0)$ be a bi-Lipschitz homeomorphism,
and let $f$, $g : (\R^2,0) \to (\R,0)$ be real analytic function germs
such that $f = g \circ \sigma$.
Suppose that $\gamma$, $\tilde{\gamma}$ are allowable real analytic demi-branches and that  
there exist $ \xi_0 \ge 1$ and $N > 0$ such that
 $$
\sigma (\gamma) \subset H_{\xi_0}(\tilde{\gamma} ;N) . 
$$
Then, for $1 \le \xi \le \xi_0$, $\ord_{\gamma}f(\xi ) = \ord_{\tilde{\gamma}}g(\xi )$ and 
$\deg P_{f,\gamma,\xi} =
\deg P_{g,\tilde \gamma,\xi}$.
\end{prop}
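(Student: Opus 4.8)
The plan is to reduce both assertions to a single statement about the \emph{order function} and then read off the degree from the Newton boundary. Indeed, by Proposition \ref{legendre} the order function $\ord_\gamma f$ and the boundary $NB_\gamma f$ determine one another, and $\deg P_{f,\gamma,\xi}$ is the $x$-coordinate of the right endpoint of the face of $NB_\gamma f$ supported in the direction $\xi$; equivalently, writing $f(X+\lambda(Y),Y)=\sum c_{ij}X^iY^j$, one has $\deg P_{f,\gamma,\xi}=\max\{i:\ i\xi+j=\ord_\gamma f(\xi),\ c_{ij}\neq 0\}$, which is the left-hand derivative of the concave piecewise-linear function $\ord_\gamma f$ at $\xi$. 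Hence it suffices to prove
$$\ord_\gamma f(\xi)=\ord_{\tilde\gamma}g(\xi)\qquad(1\le\xi\le\xi_0),$$
after which equality of the degrees on $[1,\xi_0]$ follows by differentiating, the endpoints being covered by Proposition \ref{legendre}.

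The proof of this order equality rests on two facts, the first purely metric. Since $\gamma$ and $\tilde\gamma$ are allowable, every point of the relevant horn lies at height comparable to its distance to the origin, so $|p|\asymp y$ for $p=(\,\cdot\,,y)$ in $H_\xi(\gamma)$ and likewise near $\tilde\gamma$. I would first show that, for $1\le\xi<\xi_0$, the map $\sigma$ sends a generic depth-$\xi$ arc of $\gamma$ to an arc of the \emph{same} depth $\xi$ around $\tilde\gamma$. For $\alpha:\ x=\lambda(y)+zy^\xi$ with $z\neq 0$ fixed one has $\dist(\alpha(y),\gamma)\asymp |z|\,y^\xi$, so the bi-Lipschitz estimates give $\dist(\sigma(\alpha(y)),\sigma(\gamma))\asymp y^\xi$; comparing at equal heights and using $\sigma(\gamma)\subset H_{\xi_0}(\tilde\gamma;N)$, the distance between $\sigma(\gamma)$ and $\tilde\gamma$ is $O(y^{\xi_0})=o(y^\xi)$, and the triangle inequality in both directions yields $\dist(\sigma(\alpha(y)),\tilde\gamma)\asymp y^\xi$. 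The delicate point here—and what I expect to be the main obstacle—is that the hypothesis is \emph{one-sided}: it controls $\sigma(\gamma)$ but says nothing a priori about $\sigma^{-1}(\tilde\gamma)$. I would circumvent this by running the symmetric estimate through $\sigma^{-1}$ directly: for a generic depth-$\xi$ arc $\beta:\ x=\tilde\lambda(y)+wy^\xi$ the same equal-height comparison gives $\dist(\beta,\sigma(\gamma))\asymp y^\xi$, hence $\dist(\sigma^{-1}(\beta),\gamma)\asymp y^\xi$, so $\sigma^{-1}(\beta)$ is a depth-$\xi$ arc of $\gamma$ even though $\sigma^{-1}(\tilde\gamma)$ was never assumed to lie in a horn of $\gamma$.

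The second fact is the characterisation of the order function as a minimum: for any arc at depth comparable to $y^\xi$ from $\tilde\gamma$, parametrised by its height $Y$ as $(\tilde\lambda(Y)+w(Y),Y)$ with $|w(Y)|\asymp Y^\xi$, expanding $g(X+\tilde\lambda(Y),Y)=\sum\tilde c_{ij}X^iY^j$ makes every term $\tilde c_{ij}w(Y)^iY^j$ of order exactly $i\xi+j$ in $Y$, so the order of $g$ along the arc is at least $\min\{i\xi+j:\tilde c_{ij}\neq 0\}=\ord_{\tilde\gamma}g(\xi)$, with equality for generic $w$. Combining the two ingredients: along $\alpha$ one has $g(\sigma(\alpha(y)))=f(\alpha(y))$, of order $\ord_\gamma f(\xi)$ in $y$, and since $\sigma(\alpha)$ is a depth-$\xi$ arc of $\tilde\gamma$ with $Y\asymp y$, minimality gives $\ord_\gamma f(\xi)\ge\ord_{\tilde\gamma}g(\xi)$. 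The identical reasoning applied to $\beta$, $\sigma^{-1}$ and $f(\sigma^{-1}(\beta(y)))=g(\beta(y))$ gives the reverse inequality, hence equality for $1\le\xi<\xi_0$; the case $\xi=\xi_0$ follows by continuity of the (concave piecewise-linear) order functions, and the equality of degrees on $[1,\xi_0]$ then follows as in the first paragraph. Note that only the bi-Lipschitz property and the relation $f=g\circ\sigma$ are used, and that analyticity of the image arcs $\sigma(\alpha)$, $\sigma^{-1}(\beta)$ is never required.
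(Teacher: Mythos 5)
Your core mechanism is the same as the paper's: test arcs $\delta_z : x=\lambda(y)+zy^{\xi}$, the fact that $\sigma$ carries them into horns of exponent $\xi$ around $\tilde\gamma$ (this is the paper's Lemma \ref{inclusion}, which you phrase with distances rather than horn inclusions), the pointwise estimate $|g|\le C\,Y^{\ord_{\tilde{\gamma}}g(\xi)}$ on such horns, and a symmetric argument through $\sigma^{-1}$ to reverse the inequality. Two genuine variations are worth noting. First, you derive $\deg P_{f,\gamma,\xi}=\deg P_{g,\tilde\gamma,\xi}$ from the order-function identity on all of $[1,\xi_0]$, reading the degree as the left derivative of the concave function $\ord_{\gamma}f$; at the endpoint $\xi=1$ the correct justification is mini-regularity (which gives $\deg P_{f,\gamma,1}=\ord_{\gamma}f(1)=\mult_0 f$), not Proposition \ref{legendre} per se. The paper instead gets the degree equality pointwise in $\xi$, by letting $|c|\to\infty$ in $|P_{f,\gamma,\xi}(c)|\le\max\{|P_{g,\tilde\gamma,\xi}(z)|:|z|\le A|c|+N+1\}$ -- this is exactly why its Lemma \ref{inclusion} tracks the linear dependence $\tilde M_\xi=AM+N$ of the widths. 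Second, your "comparing at equal heights" silently uses that every point of $\tilde\gamma$ at small height is $O(Y^{\xi_0})$-close to $\sigma(\gamma)$, i.e.\ the reverse of the hypothesis; this is true because $\sigma(\gamma)$ is a connected curve inside $H_{\xi_0}(\tilde{\gamma};N)\cup\{0\}$ whose height function attains every small positive value, and it is precisely the content of the paper's symmetry step $\sigma^{-1}(\tilde\gamma)\subset H_{\xi_0}(\gamma;N')$; it should be stated, since both your forward and backward estimates rely on it.

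There is, however, a genuine gap at $\xi=1$. If $\xi_0=1$ your proof is empty: the range $1\le\xi<\xi_0$ is void, so there is nothing from which to take the continuity limit, yet the proposition still asserts the equalities at $\xi=1$ (take $\sigma$ a rotation, $\gamma=\tilde\gamma$ the positive $y$-axis: the hypothesis holds with $\xi_0=1$ and no larger $\xi_0$). Moreover, your first fact as stated for $\xi=1<\xi_0$ is not provable by the given argument: a point at distance $\asymp y$ from $\tilde\gamma$ with norm $\asymp y$ need not lie in the upper half-plane at all, so $\sigma(\delta_z)$ need not admit the parametrisation $(\tilde\lambda(Y)+w(Y),Y)$ with $Y>0$, and the horn expansion of $g$ cannot be invoked (for $1<\xi<\xi_0$ this conversion is fine, since distance $o(y)$ to $\tilde\gamma$ forces height $\asymp y>0$; and when $\xi_0>1$ the value $\xi=1$ is anyway recovered by right-continuity of the order functions). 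The missing ingredient is the paper's one-line disposal of this case: $\ord_{\gamma}f(1)=\deg P_{f,\gamma,1}=\mult_0 f$ by mini-regularity, and multiplicity is a bi-Lipschitz invariant; equivalently, at $\xi=1$ replace the horn bound by the global bound $|g(p)|\le C|p|^{\mult_0 g}$. With that case added, your argument is complete.
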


\begin{proof}
If $\xi =1$ then $\ord_{\gamma}f(\xi ) = \deg P_{f,\gamma,\xi} =
\mult_0 f$ 
and the claim follows from bi-Lipschitz invariance of multiplicity.  

Suppose that $\xi_0 >1$ and $1 < \xi \le \xi_0$.    
Let $\sigma (x,y) = (\sigma_1(x,y), \sigma_2 (x,y))$, $\gamma: x=\lambda (y)$, 
$\tilde \gamma: x= \tilde \lambda (y)$.  Then, there exists $C>0$ such that 
$\tilde y(y) := \sigma_2 (\lambda(y),y)$ 
satisfies
\begin{equation*}
\frac 1 C y \le \tilde y(y) \le Cy. 
\end{equation*}

\begin{lem}\label{inclusion}
 For any  $1 < \xi \le \xi_0$ and 
$M>0$ there is $\tilde M_\xi$ such that 
$$\sigma (H_{\xi}(\gamma ;M))
\subset H_{\xi}(\tilde{\gamma} ;\tilde M_\xi ). $$
Moreover, $\tilde M_\xi$ can be chosen of the form 
$\tilde M_\xi= A M $ if $\xi <\xi_0$ and  
$\tilde M_{\xi_0} = A M +N $ if $\xi =\xi_0$. 
\end{lem}

\begin{proof}  
By Lipschitz property, for $(x,y) \in H_{\xi}(\gamma ;M)$ near
$0 \in \R^2$,
\begin{eqnarray*}
& & |\sigma_2(x,y) - \tilde y| = |\sigma_2(x,y) -  \sigma_2 (\lambda(y),y)| 
\le LMy^\xi \le LMC^\xi \tilde y^\xi = o(\tilde y)  , 
\end{eqnarray*}
and 
\begin{eqnarray*}
& |\sigma_1(x,y) - \tilde \lambda ( \tilde y)| & \le  |\sigma_1(x,y) -  \sigma_1 (\lambda(y),y)|
+   |\sigma_1(\lambda (y),y) - \tilde \lambda ( \tilde y)| \\
&&  \le  LMC^\xi \tilde y^\xi  + N \tilde y^{\xi_0} . 
\end{eqnarray*}
Finally, for an arbitrary $\varepsilon >0$, there is a neighbourhood
$U_{\epsilon}$ of $0 \in \R^2$ such that for
$(x,y) \in H_{\xi}(\gamma ;M) \cap U_{\epsilon}$,
\begin{eqnarray*}
& |\sigma_1(x ,y) - \tilde \lambda ( \sigma_2(x ,y))| & 
 \le  |\sigma_1(x ,y) -   \tilde \lambda ( \tilde y)|
+   |\tilde \lambda ( \tilde y)- \tilde \lambda ( \sigma_2(x ,y))| \\
& &  \le  LMC^\xi \tilde y^\xi  + N \tilde y^{\xi_0}  + (\tilde \lambda '(0) + \varepsilon) LMC^\xi \tilde y^\xi . 
\end{eqnarray*}
\end{proof}

Since $\sigma $ is bi-Lipschitz it can be shown by a similar argument that there exists $N'$ for which 
$\sigma (H_{\xi}(\gamma ;N')) \supset \tilde \gamma$, that is $\sigma
\inv  (\tilde \gamma) \subset H_{\xi_0}({\gamma} ;N') $.  
Thus the assumptions of Proposition \ref{lipinvariant2} are symmetric with respect to 
$f$ and $g$.  

Let $x=\delta (y) = \lambda (y) + cy^\xi$, $c$ arbitrary.  On one hand, by lemma \ref{inclusion}, 
$$
|g(\sigma(\delta (y),y)) | \le 
 \max \{P_{g,\tilde \gamma,\xi}(z) ; |z|\le A|c|+N+1 \} \, \tilde y^{\ord_{\tilde \gamma}  g(\xi)} .
$$
On the other hand 
$$
g(\sigma(\delta (y),y)) = f ( \lambda (y) + cy^\xi,y) = P_{f,\gamma,\xi}(c) y^{\ord_{\gamma}   f (\xi)} + \cdots .
$$
This implies  $\ord_{\gamma}f(\xi) \ge \ord_{\tilde{\gamma}}g (\xi)$, then by symmetry 
 $\ord_{\gamma}f(\xi) = \ord_{\tilde{\gamma}}g (\xi)$, and finally $\deg P_{f,\gamma,\xi} =
\deg P_{g,\tilde \gamma,\xi}$.
\end{proof}

\begin{cor}\label{lipinvariant1}
Let $\sigma : (\R^2,0) \to (\R^2,0)$ be a bi-Lipschitz homeomorphism,
and let $f$, $g : (\R^2,0) \to (\R,0)$ be real analytic function germs
such that $f = g \circ \sigma$.
Suppose that $\gamma$, $\tilde{\gamma}$ are allowable real analytic demi-branches 
 such that $\sigma (\gamma ) = \tilde{\gamma}$
as set-germs at $(0,0)$. 
Then, for all $\xi \ge 1$, $\ord_{\gamma}f(\xi ) = \ord_{\tilde{\gamma}}g(\xi )$ and 
$\deg P_{f,\gamma,\xi} =
\deg P_{g,\tilde \gamma,\xi}$.
In particular, $ \NB_{\gamma}f = \NB_{\tilde{\gamma}}g$.  
\end{cor}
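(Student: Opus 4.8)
The plan is to deduce this from Proposition~\ref{lipinvariant2} by exploiting that the hypothesis here, $\sigma(\gamma)=\tilde\gamma$ as set germs, is far stronger than the mere horn-neighbourhood containment required there. The first step is to observe that the exact equality $\sigma(\gamma)=\tilde\gamma$ forces $\sigma(\gamma)\subset H_{\xi_0}(\tilde\gamma;N)$ for \emph{every} exponent $\xi_0\ge 1$ and every width $N>0$. Indeed, writing $\tilde\gamma: x=\tilde\lambda(y)$, any point $(\tilde\lambda(y),y)$ of $\tilde\gamma$ with $y>0$ satisfies $|x-\tilde\lambda(y)|=0\le N\,y^{\xi_0}$, so $\tilde\gamma$ lies in each of its own horn neighbourhoods; since $\sigma(\gamma)$ coincides with $\tilde\gamma$ near the origin, the same containment holds for $\sigma(\gamma)$.

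With this in hand, the second step is simply to apply Proposition~\ref{lipinvariant2} with an arbitrarily large value of $\xi_0$. For each fixed $\xi_0$ the proposition yields $\ord_\gamma f(\xi)=\ord_{\tilde\gamma}g(\xi)$ and $\deg P_{f,\gamma,\xi}=\deg P_{g,\tilde\gamma,\xi}$ on the range $1\le\xi\le\xi_0$. Letting $\xi_0\to\infty$, that is, taking the union of the intervals $[1,\xi_0]$, promotes these equalities to the full half-line $\xi\ge 1$, which is the first assertion of the corollary. Here I only note that the definition of $H_{\xi_0}$ restricts to $y>0$ while the demi-branches are parametrised for $y\ge 0$, so the containment invoked is exactly the one feeding the hypotheses of Proposition~\ref{lipinvariant2}, and no further adjustment is needed.

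The final step, the equality $\NB_{\gamma}f=\NB_{\tilde{\gamma}}g$, is a formal consequence of Proposition~\ref{legendre}: the Newton boundary is recovered from the order function through the Legendre transform $\varphi(x)=\max_{\xi}(\ord_\gamma f(\xi)-\xi x)$. Since we have just shown $\ord_\gamma f\equiv\ord_{\tilde\gamma}g$ as functions on $[1,\infty)$, their Legendre transforms coincide, and hence the two Newton boundaries are identical. The bi-Lipschitz invariance of multiplicity, already used for the $\xi=1$ case inside Proposition~\ref{lipinvariant2}, moreover guarantees that both boundaries are graphs over the same interval $(0,m]$.

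I do not expect a genuine obstacle here: the whole analytic content is packaged in Proposition~\ref{lipinvariant2}, and this corollary is precisely the special case in which the containment of $\sigma(\gamma)$ in the horns is exact rather than asymptotic. The single point deserving a moment's attention is the reduction in the first step, namely verifying that set-germ equality yields containment in horns of every exponent; once that is granted, the remainder is a passage to the limit in $\xi_0$ followed by the Legendre duality of Proposition~\ref{legendre}.
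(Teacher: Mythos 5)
Your proposal is correct and is exactly the argument the paper intends (the corollary is left without explicit proof precisely because it follows this way): set-germ equality puts $\sigma(\gamma)$ inside $H_{\xi_0}(\tilde\gamma;N)$ for every $\xi_0\ge 1$, so Proposition~\ref{lipinvariant2} applies for all $\xi_0$, and the Newton boundary statement then follows from the Legendre duality of Proposition~\ref{legendre}. Your attention to the two minor points --- the $y>0$ convention in the horn and the common domain $(0,m]$ via bi-Lipschitz invariance of multiplicity --- is appropriate and introduces no gap.
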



\smallskip
\begin{prop}\label{c1invariant1} 
Let $\sigma : (\R^2,0) \to (\R^2,0)$ be a $C^1-$diffeomorphism with $D\sigma (0) = \id$,
and let $f$, $g : (\R^2,0) \to (\R,0)$ be real analytic function germs
such that $f = g \circ \sigma$.
Suppose that $\gamma$, $\tilde{\gamma}$ are allowable real analytic demi-branches such that  
$\sigma (\gamma ) = \tilde{\gamma}$
as set-germs at $(0,0)$. Then for all $\xi \ge 1$ 
$$ 
P_{f,\gamma, \xi} = P_ {g, \tilde \gamma, \xi} . 
$$
\end{prop}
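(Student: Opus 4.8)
The plan is to substitute the generic arc $x = \lambda(y) + zy^\xi$ into the identity $f = g\circ\sigma$ and to track precisely how the parameter $z$ is transported by $\sigma$ onto the corresponding generic arc over $\tilde\gamma$. Writing $\sigma = (\sigma_1,\sigma_2)$ and $p = p(y,z) := (\lambda(y)+zy^\xi, y)$, I would set $\tilde Y := \sigma_2(p)$ and define $w = w(y,z)$ by the relation $\sigma_1(p) = \tilde\lambda(\tilde Y) + w\,\tilde Y^\xi$, so that $\sigma(p) = (\tilde\lambda(\tilde Y)+w\,\tilde Y^\xi, \tilde Y)$ is a point of the generic arc of $\tilde\gamma$ with exponent $\xi$. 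The heart of the matter is then the asymptotic claim that, for each fixed $z$, one has $\tilde Y = y(1+o(1))$ and $w \to z$ as $y\to 0^+$.

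For the first assertion I would note that, $\gamma$ being allowable, $\lambda(y)=O(y)$, so $|p|\asymp y$; since $D\sigma(0)=\id$ and $\sigma$ is $C^1$, the relation $\sigma_2(x,y)=y+o(|(x,y)|)$ gives $\tilde Y = y+o(y)$ and hence $\tilde Y^\xi = y^\xi(1+o(1))$. For $w$ I would estimate the numerator $\sigma_1(p)-\tilde\lambda(\tilde Y)$ by splitting it as
\[
[\sigma_1(\lambda(y)+zy^\xi,y) - \sigma_1(\lambda(y),y)] + [\tilde\lambda(Y(y)) - \tilde\lambda(\tilde Y)],
\]
where $Y(y):=\sigma_2(\lambda(y),y)$ and where I used $\sigma_1(\lambda(y),y)=\tilde\lambda(Y(y))$, valid because $\sigma(\gamma)=\tilde\gamma$. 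Applying the fundamental theorem of calculus along the horizontal segment from $(\lambda(y),y)$ to $p$ and using $\partial\sigma_1/\partial x\to 1$, $\partial\sigma_2/\partial x\to 0$ (consequences of $D\sigma(0)=\id$), the first bracket equals $zy^\xi(1+o(1))$ and $|Y(y)-\tilde Y| = o(y^\xi)$; since $\tilde\lambda$ is Lipschitz near $0$ (as $n_1/N\ge 1$), the second bracket is $o(y^\xi)$. Dividing by $\tilde Y^\xi = y^\xi(1+o(1))$ yields $w = z+o(1)$.

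With these asymptotics I would finish by comparing the two expansions of the same quantity. Put $\nu := \ord_\gamma f(\xi) = \ord_{\tilde\gamma} g(\xi)$, the equality being given by corollary \ref{lipinvariant1}. On the one hand $f(p) = P_{f,\gamma,\xi}(z)\,y^\nu + \cdots$, so $f(p)/y^\nu = P_{f,\gamma,\xi}(z)+o(1)$; on the other hand $g(\sigma(p)) = P_{g,\tilde\gamma,\xi}(w)\,\tilde Y^\nu + \cdots$, so $g(\sigma(p))/\tilde Y^\nu = P_{g,\tilde\gamma,\xi}(w)+o(1)$, the remainder being uniformly small because $w$ stays bounded. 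Since $f(p) = g(\sigma(p))$ and $\tilde Y^\nu = y^\nu(1+o(1))$, letting $y\to 0^+$ and using $w\to z$ together with continuity of the polynomial $P_{g,\tilde\gamma,\xi}$ gives $P_{f,\gamma,\xi}(z) = P_{g,\tilde\gamma,\xi}(z)$. As this holds for every real $z$, the two polynomials coincide.

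The main obstacle is the uniform control of the reparametrisation $z\mapsto w$: one must extract from the mere $C^1$ hypothesis (only continuity of $D\sigma$, together with $D\sigma(0)=\id$) enough precision to see that the transverse coordinate is rescaled by a factor tending to $1$, and that the errors coming from the nonlinearity of $\sigma$ and from the curvature of $\tilde\lambda$ are genuinely $o(y^\xi)$ rather than merely $O(y^\xi)$. This is exactly where the normalisation $D\sigma(0)=\id$ is indispensable; without it one recovers only the degree of $P$, as in proposition \ref{lipinvariant2}, while the coefficients get transformed by the linear part, a situation treated in section \ref{arbitraryc1}.
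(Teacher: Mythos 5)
Your proof is correct, and it reaches the conclusion by a genuinely different technical route than the paper, although the underlying mechanism is the same. The paper first reduces to the case where both demi-branches are the positive $y$-axis, by replacing $f,g,\sigma$ with $f\circ H_1$, $g\circ H_2$, $H_2^{-1}\circ\sigma\circ H_1$, where $H_1(x,y)=(x+\lambda(y),y)$ and $H_2(X,Y)=(X+\tilde\lambda(Y),Y)$; this forces an enlargement of the category to convergent fractional power series, since $f\circ H_1$ is no longer analytic. After the reduction, a $C^1$ map preserving the $y$-axis is written as $\sigma(x,y)=(x\varphi(x,y),\,y+\psi(x,y))$ with $\varphi(0,0)=1$, $\psi(0,0)=0$, and the proposition follows from a stability lemma (lemma \ref{lemma1}): the leading term of the expansion \eqref{genericarc} is unchanged under perturbations $z\mapsto z+o(1)$, $y\mapsto y+o(y)$. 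Your argument establishes exactly the same two facts --- that $\sigma$ carries the generic arc over $\gamma$ to a generic arc over $\tilde\gamma$ with parameter $w=z+o(1)$ and vertical coordinate $\tilde Y=y(1+o(1))$, and that the leading coefficient of the expansion is stable under such perturbations (your uniform-remainder claim is precisely the content of lemma \ref{lemma1}) --- but you do it in the original coordinates, via the splitting through the identity $\sigma_1(\lambda(y),y)=\tilde\lambda(\sigma_2(\lambda(y),y))$, the fundamental theorem of calculus along horizontal segments, and the Lipschitz bound on $\tilde\lambda$ coming from allowability. What your route buys: you never leave the analytic category, and you never need to verify that the conjugated map $H_2^{-1}\circ\sigma\circ H_1$ is still $C^1$ with identity differential, a point the paper passes over quickly and which is slightly delicate since $H_1,H_2$ are only defined for $y\ge 0$ and are not diffeomorphisms in the usual sense. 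What the paper's route buys: after the reduction the bookkeeping is shorter, and the same normal form of $\sigma$ is reused immediately in the proof of proposition \ref{c1invariant2}. One small remark: your appeal to corollary \ref{lipinvariant1} for the equality $\ord_\gamma f(\xi)=\ord_{\tilde\gamma}g(\xi)$ is legitimate (it is proved beforehand from bi-Lipschitz invariance alone), but it is not strictly necessary --- the equality of orders also falls out of your final comparison by taking $z$ with $P_{f,\gamma,\xi}(z)\ne 0$, which is how the paper's proof gets it implicitly.
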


\begin{proof}
It is more convenient to work in a wider category and assume that $f$ and $g$ are convergent 
fractional power series of the form 
\begin{equation}\label{fractional}
\sum_{(i,j) \in \N \times \frac 1 q  \N} c_{ij} x^i y^{j}, 
\end{equation}
where $c_{ij}\in \R$, $q\in \N$.  Such series give rise to  function germs well-defined on $y\ge 0$.   
Let $\gamma : x=\lambda (y)$,  $\tilde{\gamma}: x=\tilde \lambda (y)$. 
Then 
$$
f\circ H_1 = g \circ H_2\ \circ (H_2\inv  \circ \sigma \circ H_1), 
$$
where  $H_1 (x,y) = (x+\lambda (y),y)$ and $H_2 (X,Y) = (X+\tilde \lambda (Y),Y)$.  
Then $\tilde f = f\circ H_1$, $\tilde g =g \circ H_2$ are fractional power series.   
The map $\tilde \sigma = H_2\inv \circ \sigma \circ H_1$ is $C^1$ and $D\tilde \sigma (0) = \id$.  
Thus by replacing $f, g, \sigma$ by $\tilde f, \tilde g, \tilde \sigma$ we may suppose that 
$\lambda \equiv \tilde \lambda \equiv 0$, that is the image of the $y-$axis is the $y-$axis.  

If $\sigma$ preserves the $y-$axis then it is of the form 
$$
\sigma (x,y) = (x\varphi (x,y), y+ \psi (x,y)),
$$
with $\varphi (x,y),  \psi (x,y)$ continuous and $\varphi(0,0)=1, \psi (0,0)=0$.  

Let  $g(x,y)$ be a fractional power series as in \eqref{fractional}.  
The expansion \eqref{genericarc} 
still holds for $g$ and any allowable demi-branch.  We use this
property for the (positive) $y-$axis 
as a demi-branch that we denote 
below  by $\underline 0$ (since it is given by $\lambda \equiv 0$).

\begin{lem}\label{lemma1} 
Let  $g(x,y)$ be a fractional power series as in \eqref{fractional}. 
 Then for all $\alpha (y), \beta (y)$ such that  
$\alpha (y) = o(y), \beta (y) = o(y)$,  $\xi \ge 1$, and $z\in \R$ bounded  
$$ 
g( (z + \alpha (y))  y^\xi , y + \beta (y)) = 
P_{g,\underline 0,\xi}(z) y ^{\ord_{\underline 0} g (\xi)} + 
 o(y ^{\ord_{\underline 0} g (\xi)}).  
$$
\end{lem}

\begin{proof}
We have 
\begin{equation*}
g(zy^{\xi},y) 
= P_{g,\underline 0,\xi} (z) y^{\ord_{\underline 0}g(\xi )} + o(y^{\ord_{\underline0}g(\xi )} )  .  
\end{equation*}
More precisely 
$g(zy^{\xi},y)  - P_{g,\underline 0,\xi} (z) y^{\ord_{\underline 0}g(\xi )} \to 0 $ as $y\to 0$ and $z$ is bounded.  
Then 
\begin{eqnarray*}
g((z + \alpha (y))  y^\xi , y + \beta (y)) & = &   g((z+ \alpha (y)) (y+ \beta (y) - \beta (y)) ^\xi , y + \beta (y)) \\
&=  &   g((z+  \tilde \alpha(y)) (y+ \beta (y) ) ^\xi , y + \beta (y))  \\
& = & P_{g,\underline 0,\xi} ( z+  \tilde \alpha(y)) (y + \beta (y))^{\ord_{\underline 0}g(\xi )} + 
o((y)^{\ord_{\underline 0}g(\xi )}) \\
& = &   P_{g,\underline 0,\xi}(z) y ^{\ord_{\underline 0} g (\xi)} + o(y ^{\ord_{\underline 0} g (\xi)}),  
\end{eqnarray*}
where $\tilde \alpha(y) = o(y)$.  
\end{proof}

To complete the proof of proposition \ref{c1invariant1} we apply lemma \ref{lemma1} to $g$, 
$\alpha (y) = \varphi (cy^\xi , y)-1$, and $\beta (y) =  \psi (cy^\xi , y)$, where $c\in \R$ is a constant.  Then 
\begin{eqnarray*}
f(cy^\xi , y) & = & g\circ \sigma (cy^\xi, y) = g (c y^\xi \varphi (cy^\xi
, y), y + \psi (cy^\xi , y))\\
& =  &  
P_{g,\underline 0,\xi}(c) y ^{\ord_{\underline 0} g (\xi)} + o(y ^{\ord_{\underline 0} g (\xi)}) . 
\end{eqnarray*}
Therefore, by expanding 
$f(cy^\xi , y) = P_{f,\underline 0,\xi}(c) y ^{\ord_{\underline 0} f
  (\xi)} + o(y ^{\ord_{\underline 0} f (\xi)})  $, we obtain 
$$
P_{f,\underline 0,\xi}(c) y ^{\ord_{\underline 0} f (\xi)} + o(y ^{\ord_{\underline 0} f (\xi)})  = 
P_{g,\underline 0,\xi}(c) y ^{\ord_{\underline 0} g (\xi)} + o(y ^{\ord_{\underline 0} g (\xi)}) , 
$$
that shows  $ P_{f,\underline 0,\xi} = P_{g,\underline 0,\xi}  $.  
This ends the proof of proposition 
\ref{c1invariant1}.
\end{proof}

\medskip
\begin{prop}\label{c1invariant2} 
Let $\sigma : (\R^2,0) \to (\R^2,0)$ be a $C^1-$diffeomorphism with $D\sigma (0) = \id$,
and let $f$, $g : (\R^2,0) \to (\R,0)$ be real analytic function germs
such that $f = g \circ \sigma$.
Suppose that $\gamma$, $\tilde{\gamma}$ are allowable real analytic demi-branches and 
that there exist $ \xi_0 \ge 1, N > 0$ such that
$$
\sigma (\gamma ) \subset 
H_{\xi_0}(\tilde{\gamma};N) . 
$$
 Then for all $1\le \xi < \xi_0$,  
$$ 
P_{f,\gamma, \xi} = P_ {g, \tilde \gamma, \xi}.   
$$
Moreover,  $ P_{f,\gamma, \xi_0}$ and $ P_ {g, \tilde \gamma, \xi_0} $ 
have the same degrees and their leading coefficients coincide.
\end{prop}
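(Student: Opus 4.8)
The plan is to follow the proof of Proposition~\ref{c1invariant1} as closely as possible, keeping track of the exact point at which the weaker hypothesis (image contained in a horn neighbourhood, instead of $\sigma(\gamma)=\tilde\gamma$) forces a weaker conclusion. First I would carry out the same reduction: composing with $H_1(x,y)=(x+\lambda(y),y)$ and $H_2(X,Y)=(X+\tilde\lambda(Y),Y)$, replace $f,g,\sigma$ by $\tilde f=f\circ H_1$, $\tilde g=g\circ H_2$, $\tilde\sigma=H_2\inv\circ\sigma\circ H_1$. Then $\tilde f=\tilde g\circ\tilde\sigma$, the germs $\tilde f,\tilde g$ are fractional power series, $\tilde\sigma$ is $C^1$ with $D\tilde\sigma(0)=\id$, both branches become the positive $y$-axis $\underline 0$, and one checks (using that $\tilde\sigma_2$ is comparable to $y$) that the hypothesis becomes $\tilde\sigma(\underline 0)\subset H_{\xi_0}(\underline 0;N')$ for some $N'$. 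Since $P_{f,\gamma,\xi}=P_{\tilde f,\underline 0,\xi}$ and $\ord_{\gamma}f=\ord_{\underline 0}\tilde f$, and likewise for $g$, it suffices to treat the reduced data, written as $\tilde\sigma(x,y)=(x+r_1(x,y),\,y+r_2(x,y))$ with $r_i=o(|(x,y)|)$ and $Dr_i(0)=0$.

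Next I would establish the key estimate along the arcs $x=cy^\xi$. Because $|(cy^\xi,y)|$ is comparable to $y$ for $\xi\ge 1$, one has $r_2(cy^\xi,y)=o(y)$. For $r_1$, the partial derivative $\partial r_1/\partial x=\partial\tilde\sigma_1/\partial x-1$ is continuous and vanishes at the origin, so integrating in $x$ from $0$ to $cy^\xi$ gives $r_1(cy^\xi,y)=r_1(0,y)+o(y^\xi)$, while the horn condition forces $|r_1(0,y)|\le N'\,\tilde\sigma_2(0,y)^{\xi_0}=O(y^{\xi_0})$. Hence $r_1(cy^\xi,y)=O(y^{\xi_0})+o(y^\xi)$, which for $\xi<\xi_0$ is $o(y^\xi)$. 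For such $\xi$ I can apply Lemma~\ref{lemma1} to $\tilde g$ with $z=c$, shift $o(1)$ in the first slot and $o(y)$ in the second, obtaining $\tilde f(cy^\xi,y)=\tilde g(\tilde\sigma(cy^\xi,y))=P_{\tilde g,\underline 0,\xi}(c)\,y^{\ord_{\underline 0}\tilde g(\xi)}+o(y^{\ord_{\underline 0}\tilde g(\xi)})$. Comparing with the defining expansion of $\tilde f$ and using $\ord_{\underline 0}\tilde f(\xi)=\ord_{\underline 0}\tilde g(\xi)$ from Proposition~\ref{lipinvariant2} (a $C^1$ diffeomorphism with $D\sigma(0)=\id$ is bi-Lipschitz), I conclude $P_{f,\gamma,\xi}=P_{g,\tilde\gamma,\xi}$ for all $1\le\xi<\xi_0$.

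For the borderline $\xi=\xi_0$ the estimate only yields $r_1=O(y^{\xi_0})$, so the argument breaks down and I would argue combinatorially. Equality of degrees is already given by Proposition~\ref{lipinvariant2}. For the leading coefficients, recall $P_{f,\underline 0,\xi}(z)=\sum_{i\xi+j=\ord_{\underline 0}f(\xi)}c_{ij}z^i$; a short convexity computation shows that for $\xi$ in a small left-neighbourhood of $\xi_0$ the minimum defining $\ord_{\underline 0}f(\xi)$ is attained at the single right-hand vertex $(d,j_d)$ of the edge of $\NB_{\underline 0}f$ of slope $-\xi_0$, where $d=\deg P_{f,\underline 0,\xi_0}$ and $c_{d,j_d}$ is its leading coefficient; thus $P_{f,\underline 0,\xi}(z)=c_{d,j_d}z^{d}$ there, and likewise for $g$. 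Since these monomials coincide by the case already proved, the degrees and leading coefficients at $\xi_0$ agree. (If $\xi_0=1$ there is no room on the left; then $d=\mult_0 f$ and the leading coefficient is read from the initial form, equal by Lemma~\ref{initialform} as $D\sigma(0)=\id$.)

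The hard part is the estimate of the second paragraph: upgrading the generic bound $r_1=o(y)$ coming from mere differentiability to $r_1=o(y^\xi)$ for $\xi<\xi_0$. This is exactly where the horn hypothesis is used, and it replaces the explicit factorisation $\sigma_1=x\varphi(x,y)$ available in Proposition~\ref{c1invariant1}. At the endpoint $\xi=\xi_0$ the small-$o$ genuinely degrades to a big-$O$, which is why only the degree and the leading coefficient, rather than the whole polynomial, persist, and why the combinatorial limiting argument is unavoidable there.
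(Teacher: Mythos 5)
Your proof is correct, and on the range $1\le\xi<\xi_0$ it is essentially the paper's own argument: the same reduction sending both demi-branches to the positive $y$-axis (legitimate because $\xi_0>1$ forces the tangents of $\gamma$ and $\tilde\gamma$ to agree, which is what makes $D\tilde\sigma(0)=\id$ after conjugation by $H_1,H_2$), the same use of the horn hypothesis to get $\sigma_1(0,y)=O(y^{\xi_0})=o(y^{\xi})$, and the same appeal to Lemma \ref{lemma1}; your integration of $\partial r_1/\partial x$ is the paper's factorisation $\sigma_1(x,y)=\sigma_1(0,y)+x\varphi(x,y)$, $\varphi$ continuous, $\varphi(0,0)=1$, in different notation. (Like the paper, you feed Lemma \ref{lemma1} a first-slot perturbation that is only $o(1)$ rather than the stated $o(y)$; this is harmless, since the lemma's proof only uses continuity of $P_{g,\underline 0,\xi}$ in $z$, but note it is the same mild abuse the paper itself commits.) The genuine divergence is at the endpoint $\xi=\xi_0$. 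The paper stays at $\xi_0$: there the shift is bounded rather than vanishing, which yields $P_{f,\underline 0,\xi_0}(c)\in\{P_{g,\underline 0,\xi_0}(z);\ |z-c|\le N_1\}$ for all $c$, and letting $c\to\infty$ the growth of both polynomials forces equal degrees and equal leading coefficients. You instead take a left-limit in $\xi$: by convexity of the Newton polygon, for $\xi$ just below $\xi_0$ the polynomial $P_{f,\underline 0,\xi}$ collapses to the monomial $c_{d,j_d}z^{d}$ carried by the lower-right vertex of the face of slope $-\xi_0$ (the same holds trivially when that face degenerates to a vertex), i.e.\ exactly the leading term of $P_{f,\underline 0,\xi_0}$; the equality already proved for $\xi<\xi_0$ then transfers degree and leading coefficient. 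Both endpoint arguments are sound: the paper's is shorter and purely asymptotic, while yours isolates the combinatorial reason why only the degree and the leading coefficient can survive at $\xi_0$, and it makes your citation of Proposition \ref{lipinvariant2} for the degrees redundant. Your treatment of $\xi_0=1$ via Lemma \ref{initialform} coincides with the paper's.
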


\begin{proof}
Let $\xi_0>1$.  Then the tangent directions at the origin to $\gamma$
and $\tilde \gamma$ 
coincide.  We assume as above that $\gamma$, and $\tilde \gamma$
resp., is the (positive) $y-$axis.  
Write 
$$
\sigma (x,y) = (\sigma_1(0,y) + x \varphi (x,y), y+ \psi (x,y)),
$$
with $\varphi (x,y),  \psi (x,y)$ continuous and $\varphi(0,0)=1, \psi
(0,0)=0$.   The assumption on the image of 
$\gamma$ gives 
$$
|\sigma_1 (0,y) | \le N_1 |y|^{\xi_0}, 
$$
with $N_1 = N + \varepsilon$.   Then, for $\xi <\xi_0$, $\sigma_1 (0,y) = o(y^\xi)$ and 
\begin{eqnarray*}
f(cy^\xi , y) = g\circ \sigma (cy^\xi, y)  & = &  g (c y^\xi \varphi (cy^\xi , y  ) 
+ \sigma_1 (0,y), y + \psi (cy^\xi , y)) \\
& = & g( (c + \alpha (y))  y^\xi , y + \beta (y)) 
\end{eqnarray*}
with $\alpha (y) = o(y), \beta (y) = o(y)$.  
Thus the first claim follows again from lemma \ref{lemma1}.

If $\xi = \xi_0 >1$ then the same computation shows that 
$$
P_{f,\underline 0,\xi} (c) \in \{ P_{g,\underline 0,\xi} (z) ; \
|z - c| \le N_1 \} ,
$$
for all $c$.  That shows that the degrees of $P_{f,\underline 0,\xi} $
and $ P_{g,\underline 0,\xi}$ and 
their leading coefficients coincide.  

If $\xi_0=1$ then $P_{f,\gamma, 1}$  depends only on the initial homogeneous form of $f$, 
denoted by $f_m$ in \eqref{expansion}, and the tangent direction to $\gamma$ at the origin.  
Then  $m = \deg P_{f,\gamma, 1}$ 
and the leading coefficient of $P_{f,\gamma, 1}$ is independent of the choice of $\gamma$.  
But the initial homogeneous forms of $f$ and $g$ coincide by 
  lemma \ref{initialform}.  This completes the proof of proposition \ref{c1invariant2}.  
\end{proof}

Let $f(X+\lambda (Y),Y) = \sum_{i,j} c_{ij} X^i Y^j$.  Then by  {\em the initial Newton polynomial of} $f$
{\em relative to } $\gamma$,  we mean 
\begin{equation}
in_{\gamma}f = \sum_{(i,j) \in NB_{\gamma}(f)} c_{ij} X^i Y^j.  
\end{equation}
Note that $in_{\gamma}f$ is a fractional polynomial, $(i,j) \in \Z\times \Q, i\ge 0, j \ge 0$.

\begin{cor}
Let $\sigma : (\R^2,0) \to (\R^2,0)$ be a $C^1-$homeomorphism with $D\sigma (0) = \id$,
and let $f$, $g : (\R^2,0) \to (\R,0)$ be real analytic function germs
such that $f = g \circ \sigma$.
Suppose that $\gamma$, $\tilde{\gamma}$ are allowable real analytic demi-branches such that  
$\sigma (\gamma ) = \tilde{\gamma}$
as set-germs at $(0,0)$.  Then $in_{\gamma}f = in_{\tilde \gamma} g $.  
\end{cor}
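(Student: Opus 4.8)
The plan is to obtain the corollary as a clean combination of the two preceding results, the only genuine work being a translation of identities of polynomials into identities of coefficients along the Newton boundary. Since $\sigma$ is $C^1$ with $D\sigma(0)=\id$, it is in particular a bi-Lipschitz homeomorphism, so corollary \ref{lipinvariant1} applies and yields $\ord_{\gamma}f=\ord_{\tilde\gamma}g$ on $[1,\infty)$, hence $\NB_{\gamma}f=\NB_{\tilde\gamma}g$. Simultaneously, the hypotheses of proposition \ref{c1invariant1} are met, so $P_{f,\gamma,\xi}=P_{g,\tilde\gamma,\xi}$ for every $\xi\ge 1$. What remains is to show that this data recovers the boundary coefficients, i.e. $in_{\gamma}f$.

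Writing $f(X+\lambda(Y),Y)=\sum_{i,j}c_{ij}X^iY^j$, I would recall from the proof of proposition \ref{legendre} that $\ord_{\gamma}f(\xi)=\min_{i,j}\{\,i\xi+j:c_{ij}\ne 0\,\}$, so that
\[
P_{f,\gamma,\xi}(z)=\sum_{i\xi+j=\ord_{\gamma}f(\xi)}c_{ij}\,z^i .
\]
Thus $P_{f,\gamma,\xi}$ collects exactly the coefficients indexed by the face of $\NB_{\gamma}f$ selected by the weight $\xi$, namely the supporting line of slope $-\xi$. As $\xi$ runs through $[1,\infty)$ these faces sweep out the whole boundary: an edge of slope $-\xi_0$ is selected by $\xi=\xi_0$, and a vertex is selected for all $\xi$ in an interval. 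Reading off the coefficient of $z^i$, the value $c_{ij}$ with $j=\ord_{\gamma}f(\xi)-i\xi$ is recovered from $P_{f,\gamma,\xi}$. Because $\ord_{\gamma}f=\ord_{\tilde\gamma}g$ pins down the exponent $j$ identically for $f$ and $g$, while $P_{f,\gamma,\xi}=P_{g,\tilde\gamma,\xi}$ matches the coefficient of each $z^i$, every boundary coefficient of $f$ equals the corresponding one of $g$; this is precisely $in_{\gamma}f=in_{\tilde\gamma}g$.

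The one point that needs checking — and the only place where the standing hypotheses enter beyond the cited results — is that every face of $\NB_{\gamma}f$ really is selected by some $\xi\ge 1$, so that nothing on the boundary escapes the range of the order function. Here I would argue as follows: giving both $X$ and $Y$ weight $1$, the allowability condition $\lambda(Y)=O(Y^{n_1/N})$ with $n_1/N\ge 1$ makes $X+\lambda(Y)$ of weight $\ge 1$, so every monomial of $f(X+\lambda(Y),Y)$ arising from the degree-$k$ part of $f$ has $i+j\ge k\ge m$; on the other hand mini-regularity $f_m(1,0)\ne 0$ forces $c_{m,0}\ne 0$ with no $c_{i,0}$ for $i<m$, so $(m,0)$ is the bottom vertex and lies on the line $i+j=m$. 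Hence the whole boundary lies on or above this line of slope $-1$ and meets it at $(m,0)$, which by convexity forces every edge to have slope $\le -1$, i.e. $\xi\ge 1$. Granting this, the reading-off argument above applies to every boundary monomial without exception, and the corollary follows with no further difficulty.
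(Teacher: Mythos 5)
Your proof is correct and follows essentially the same route as the paper: the paper's own proof consists of exactly the key identity $P_{f,\gamma,\xi}(z)=\sum_{(i,j)\in\Gamma_\xi}c_{ij}z^i$ for a face $\Gamma_\xi$ of slope $-\xi$, combined implicitly with corollary \ref{lipinvariant1} and proposition \ref{c1invariant1}. You merely spell out the details the paper leaves to the reader (recovering each $c_{ij}$ from the pair $(\ord_\gamma f, P_{f,\gamma,\xi})$, and the check via mini-regularity and allowability that every compact face has slope $\le -1$, so is seen by some $\xi\ge 1$), all of which is sound.
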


\begin{proof}
Let $\Gamma_\xi $ be a compact face of $NB_{\gamma}(f)$ of slope $-\xi$.  Then 
\begin{equation*}
P_{f,\gamma,\xi} (z)  = \sum_{(i,j) \in \Gamma_\xi} c_{ij} z^i  .  
\end{equation*}
\end{proof}


\subsection{C$^ 1$ invariance of Puiseux pairs of roots}\label{Puiseuxroots}

Let $\gamma: x= \lambda (y)$ be an allowable real analytic demi-branch.   
The Puiseux pairs of $\gamma$ 
are pairs of relatively prime positive integers 
$(n_1,d_1), \ldots, (n_q,d_q)$, $d_i>1$ for  $i=1,\ldots,q$,   
$\frac {n_1}{d_1} < \frac {n_2}{d_1 d_2} < \cdots < 
\frac {n_q }{d_1\ldots d_q}$, 
such that  
\begin{eqnarray}\label{Ppairs} 
& & \qquad  \qquad  \qquad   \lambda (y)  =   \sum_\alpha a_\alpha y^\alpha = \\ 
\notag
&  & = \sum_{j=1}^{[{n_1}/{d_1}] }  a_j y^j  
+ \sum_{j=n_1}^{[{n_2}/{d_2}] }   a_{{j}/{d_1}}y^{ {j}/{d_1}} 
+ \sum_{j=n_2}^{[{n_3}/{d_3}] } a_{ {j}/{d_1 d_2}}y^{ \frac {j} {d_1 d_2}} + \cdots 
+ \sum_{j=n_q}^\infty   a_{{j}/{d_1 d_2 \cdots d_q}}y^{ \frac {j} {d_1 d_2 \cdots d_q }} 
\end{eqnarray}
and $a_{n_i/d_1\cdots d_i}\ne 0$ for $i=1,\ldots ,q$, cf. e.g. \cite{wall}.   
The exponents $n_i/d_1\cdots d_i$ will be called \emph{the (Puiseux)
characteristic exponents of 
$\gamma$}.   
The corresponding coefficients 
$A_i (\gamma ):= a_{n_i/d_1\cdots d_i}$ for $i=1,\ldots ,q$ 
will be called 
\emph{the characteristic coefficients of $\gamma$}.

\begin{prop}\label{Puiseuxinvariance1}
Let $\sigma : (\R^2,0) \to (\R^2,0)$ be a $C^1$ diffeomorphism 
and let $f$, $g : (\R^2,0) \to (\R,0)$ be real analytic function germs
such that $f = g \circ \sigma$. Suppose that $\gamma \subset f\inv (0)$, $\tilde{\gamma}\subset g\inv (0)$
 are allowable real 
analytic demi-branches such that  $\sigma (\gamma) = \tilde \gamma$ as set germs.  
Then the Puiseux characteristic pairs of $\gamma$ and 
$\tilde \gamma$ coincide.

Moreover, if $D\sigma (0)$ preserves orientation then the signs of characteristic coefficients 
of $\gamma$ and $\tilde \gamma$ coincide. 
\end{prop}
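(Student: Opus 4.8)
**The plan is to reduce everything to the case analysed in Propositions~\ref{c1invariant1} and~\ref{c1invariant2}, and then extract the Puiseux characteristic data from the order functions and initial Newton polynomials.** The key observation is that Proposition~\ref{Puiseuxinvariance1} concerns roots, so $\gamma\subset f\inv(0)$ and $\tilde\gamma\subset g\inv(0)$. First I would show that the Puiseux characteristic exponents and characteristic coefficients of an allowable root $\gamma$ can be recovered from the order function $\ord_\gamma f$ together with the family of polynomials $P_{f,\gamma,\xi}$. Intuitively, $\ord_\gamma f(\xi)$ jumps in slope precisely at the characteristic exponents of the other roots of $f$ that are ``glued'' to $\gamma$, and the truncation of $\lambda$ at a characteristic exponent is detected by when the generic arc $\lambda(y)+zy^\xi$ starts to separate from a conjugate branch. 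The plan is to make this precise: since $f(\lambda(y),y)\equiv 0$, the order function $\ord_\gamma f(\xi)$ for $\xi$ just above a characteristic exponent $\beta_i=n_i/(d_1\cdots d_i)$ reflects the contact of $\gamma$ with its Galois conjugates, and the successive characteristic exponents together with the signs of the characteristic coefficients are encoded in the degrees and leading coefficients of the $P_{f,\gamma,\xi}$.

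\smallskip

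\noindent\textbf{Reduction to $D\sigma(0)=\id$.} The hypotheses of Proposition~\ref{Puiseuxinvariance1} allow an arbitrary $C^1$ diffeomorphism, whereas Propositions~\ref{c1invariant1} and~\ref{c1invariant2} assume $D\sigma(0)=\id$. Since $\gamma,\tilde\gamma$ are allowable (transverse to the $x$-axis) and $\sigma(\gamma)=\tilde\gamma$, the linear map $A=D\sigma(0)$ sends the $y$-axis direction to the $y$-axis direction. I would compose $\sigma$ with $A\inv$ on the source: writing $\sigma=A\circ\tau$ with $\tau=A\inv\circ\sigma$ so that $D\tau(0)=\id$, and replacing $g$ by $g\circ A$, one reduces to the case $D\sigma(0)=\id$ at the cost of a linear change of coordinates that preserves allowability and the characteristic exponents (a linear map fixing the tangent line to an allowable branch does not alter its Puiseux exponents, only possibly rescaling coefficients). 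The orientation statement will then follow by tracking whether $\det A>0$.

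\smallskip

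\noindent\textbf{Extracting the pairs.} With $D\sigma(0)=\id$, Corollary~\ref{lipinvariant1} gives $\NB_\gamma f=\NB_{\tilde\gamma}g$, hence $\ord_\gamma f=\ord_{\tilde\gamma}g$ as functions on $[1,\infty)$, and Proposition~\ref{c1invariant1} gives $P_{f,\gamma,\xi}=P_{g,\tilde\gamma,\xi}$ for all $\xi\ge 1$. The task is then purely local-algebraic: I must show that from the pair $\bigl(\ord_\gamma f,\ (P_{f,\gamma,\xi})_\xi\bigr)$ one can read off the Puiseux characteristic pairs $(n_i,d_i)$ of the root $\gamma$, and that the leading coefficients of the $P_{f,\gamma,\xi}$ determine the signs of the characteristic coefficients $A_i(\gamma)$. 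The idea is that expanding $f$ relative to $\gamma$ factors $f(X+\lambda(Y),Y)$ through the other Newton--Puiseux roots $\lambda'(y)$ of $f$; the contact order between $\gamma$ and a conjugate root equals a characteristic exponent, and this contact is exactly where $\ord_\gamma f$ changes slope while the degree of $P_{f,\gamma,\xi}$ jumps. A Galois-orbit count of the conjugate roots sharing a given contact with $\gamma$ recovers the denominators $d_i$, and the exponents themselves give the numerators; the sign of the leading coefficient of $P_{f,\gamma,\xi_0}$ at the critical $\xi_0=\beta_i$ records the sign of $A_i(\gamma)$.

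\smallskip

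\noindent\textbf{Main obstacle.} The delicate point is the combinatorial dictionary between the order function together with the polynomials $P_{f,\gamma,\xi}$ and the Puiseux data of $\gamma$ \emph{as a root}. Over $\R$ this is subtle, precisely because, as the authors emphasise, the Puiseux pairs of a real branch are \emph{not} determined by contact orders among roots alone (unlike the complex case). The resolution must exploit both hypotheses simultaneously: that $\gamma$ is a root of $f$ (so $f$ vanishes on it and the expansion is governed by the conjugate roots) and that $\sigma$ conjugates $f$ to $g$. I expect the heart of the argument to be a lemma showing that, for a root $\gamma$, the invariants $\ord_\gamma f(\xi)$ and $\deg P_{f,\gamma,\xi}$ jump exactly at the characteristic exponents $\beta_i$, with the jump in degree equal to the number of conjugate roots newly separating from $\gamma$; combined with $P_{f,\gamma,\xi}=P_{g,\tilde\gamma,\xi}$, this forces the characteristic exponents and the signs of the characteristic coefficients of $\gamma$ and $\tilde\gamma$ to agree, completing the proof.
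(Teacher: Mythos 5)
Your reduction to $D\sigma(0)=\id$ and your appeal to Corollary \ref{lipinvariant1} and Proposition \ref{c1invariant1} coincide with the paper's strategy, but the heart of your argument rests on a key lemma that is false. You expect that $\ord_\gamma f$ changes slope, and $\deg P_{f,\gamma,\xi}$ jumps, \emph{exactly} at the characteristic exponents of $\gamma$. In fact these invariants jump at \emph{every} contact order of $\gamma$ with a complex Newton--Puiseux root of $f$, whether or not that root is a Galois conjugate of the branch carrying $\gamma$. For example, take $f(x,y)=x(x^2-y^3)$ and the smooth root $\gamma:\ x=0$: one computes $P_{f,\gamma,\xi}(z)=z^3$ for $\xi<3/2$, $P_{f,\gamma,3/2}(z)=z^3-z$, and $P_{f,\gamma,\xi}(z)=-z$ for $\xi>3/2$, so the degree drops and the slope of $\ord_\gamma f$ changes at $\xi=3/2$, although $\gamma$ has no Puiseux pair at all. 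Your proposed repair, a ``Galois-orbit count'', cannot be performed from the invariant data: the roots of $P_{f,\gamma,\xi}$ record coefficient differences with \emph{all} roots of $f$ having contact $\xi$ with $\gamma$, and nothing in the pair $\bigl(\ord_\gamma f,(P_{f,\gamma,\xi})_\xi\bigr)$ distinguishes conjugates of $\lambda$ from other branches -- this is precisely the real-versus-complex subtlety the authors warn about. Your sign mechanism also fails: the leading coefficient of $P_{f,\gamma,\xi}$ is, up to a unit, a product of contributions of the roots with contact $<\xi$ and is insensitive to the coefficient of $\gamma$ at $y^\xi$, so its sign cannot record the sign of $A_i(\gamma)$.

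The paper's actual argument avoids jump loci entirely and instead recovers the coefficients themselves. It proceeds by induction on the Puiseux pairs: assuming $(n_1,d_1),\dots,(n_i,d_i)$ are common to $\gamma$ and $\tilde\gamma$, fix any candidate exponent $\xi=n/(d_1\cdots d_id)$ with $d>1$, $\gcd(n,d)=1$, and let $\gamma_\xi$ be the truncation of $\gamma$ below $\xi$, so that $P_{f,\gamma,\xi}(z)=P_0(z+a_\xi)$ with $P_0=P_{f,\gamma_\xi,\xi}$ and $a_\xi$ the coefficient of $y^\xi$ in $\lambda$. The key Lemma \ref{Puiseux} is arithmetic: since all exponents of $\gamma_\xi$ have denominators dividing $d_1\cdots d_i$, one gets $P_0(z)=z^k\tilde P_0(z^d)$, so in particular the subleading coefficient of $P_0$ vanishes. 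Writing $P_{f,\gamma,\xi}=A_0z^{i_0}+A_1z^{i_0-1}+\cdots$, the shift then yields $a_\xi=A_1/(i_0A_0)$; here the hypothesis $\gamma\subset f\inv(0)$ enters, forcing $P_{f,\gamma,\xi}(0)=0$, so the polynomial is non-constant and the formula makes sense. Combining this with $P_{f,\gamma,\xi}=P_{g,\tilde\gamma,\xi}$ from Proposition \ref{c1invariant1} gives $a_\xi=\tilde a_\xi$ for every candidate $\xi$, hence the next characteristic exponent and its coefficient (in particular its sign, up to an orientation-preserving linear change) coincide, and the induction closes. Without this coefficient-recovery lemma -- which is the real content of the proof -- your outline cannot be completed.
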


\begin{proof} 
Let us write $f$ and $g$ as in Lemma \ref{initialform},
and let $A$ be the linear transformation of $\R^2$ given in the proof.
Define $\tilde{g} : (\R^2,0) \to (\R,0)$ by
$\tilde{g}(x) = g(A(x))$.
Then $\tilde{g}$ has the form
$$
\tilde{g}(x) = f_m(x) + \tilde{g}_{m+1}(x) + \tilde{g}_{m+2}(x) 
+ \cdots
$$
where $\tilde{g}_j(x) = g_j(A(x))$, $j = m+1$, $m+2, \cdots$.
Then $\tilde{g}$ is linearly equivalent to $g$
and $C^1$-equivalent to $f$.
Therefore we may assume that $D\sigma (0) = Id$.

Let $(n_1,d_1), \ldots,$ $(n_{i},d_{i})$ be the first $i$ Puiseux pairs 
of $\gamma$  and  fix  $\xi =  {n}/ {d_1 \cdots d_{i}d}$, $\gcd (n,d)=1$, 
$d>1$, such that $\xi >  
\frac {n_i }{d_1\ldots d_{i}}$.   We may suppose by the inductive assumption that 
$(n_1,d_1), \ldots,$ $(n_{i},d_{i})$ are also the first $i$ Puiseux pairs 
of $\tilde \gamma$.  

Consider the truncation $\gamma_\xi$ of $\gamma$ at $\xi$: if 
$\gamma : x= \lambda (y) =\sum a_\alpha y^\alpha$ then 
$$\gamma_\xi: x= \lambda_\xi (y) = 
\sum _{\alpha <\xi} a_\alpha y^\alpha .
$$   
Denote by $P(z) = P_{f,\gamma,\xi}$, $P_0(z) = P_{f,\gamma_\xi ,\xi}$ the polynomials defined 
for $\gamma$ and $\gamma_\xi$ by the expansion \eqref{genericarc}.  Then $P(z) = P_0 (z+a_\xi)$.   
Denote $\ord = \ord _{\gamma_\xi} f (\xi)=  \ord _{\gamma} f (\xi)$.

\begin{lem}\label{Puiseux}
There exist an integer $k\ge 0$ and a  polynomial $\tilde P_0$ such that 
$ P_0 (z) =  z^k \tilde P_0(z^d)$.  
\end{lem}

\begin{proof}
Write 
\begin{eqnarray*}
f(\lambda_\xi (y) + x,y)    =   
\sum _{\xi i + j\ge  \ord } c_{ij} x^i y^j  
= P_0 (x/y^\xi) y^{\ord}  +  
\sum _{\xi i + j>  \ord} c_{ij} x^i y^j . 
\end{eqnarray*}
Since the Puiseux pairs of $\gamma_\xi$ determine the possible 
denominators of the exponents of $y$ in $f(\lambda_\xi (y) + x,y) $ 
\begin{equation} \label{condition}
c_{ij} \ne 0 \Rightarrow j (d_1 \cdots d_{i}) \in \N.
\end{equation} 
Let $i_0 = \deg P_\xi$, $j_0 =  \ord  - \xi i_0$.  Then 
$P_0 (z) = c_{i_0j_0} z^{i_0} +$ lower degree terms.

We show that for $(i,j)$ such that $\xi i + j=  \ord$, the condition 
$  j (d_1 \cdots d_{i}) \in \N$ of \eqref{condition} is equivalent to
$(i-i_o)\in d\N$.  
Indeed, since $\xi(i-i_0) + (j-j_0) =0$ and $j_0 (d_1 \cdots d_{i}) \in \N.$,   
\begin{eqnarray*}
& & j  (d_1 \cdots d_{i})   \in \N \Leftrightarrow 
\xi (i-i_0) d_1 \cdots d_{i}\in \N \Leftrightarrow (i-i_0) n/d \in \N 
\Leftrightarrow (i-i_0)\in d\N .
\end{eqnarray*}
Thus $P_0 = a_{i_0j_0} z^k \hat P_0(z^d)$ with $\hat P_0$ unitary.   
\end{proof}

By lemma \ref{Puiseux} 
$$
P(z) = P_0 (z+a_\xi)=   
(z+ a_\xi) ^k \tilde P_0((z+a_\xi)^d) = A_0 z^{i_0} + A_1 z^{i_0-1} + \cdots.    
$$
If $\gamma \subset f\inv (0)$ then $P(0)=0$ and therefore $P$ is
not 
identically equal to a constant. If $P$ is not a constant then we may compute  $a_\xi =
\frac{A_1}{i_0A_0}$.  Consequently, by proposition \ref{c1invariant1}, 
$\xi$ is a Puiseux characteristic exponent of $\gamma$ iff it is a one
of $\tilde \gamma$.  
Moreover, the characteristic coefficients are 
the same.  (Arbitrary linear isomorphisms may change these
coefficients 
but not their signs if the orientation is preserved.)
\end{proof}



\bigskip
\section{$C^1$  equivalent germs are blow-analytically equivalent}
\label{$C^1$}
\medskip

In this section we show theorem \ref{c1givesblow}.  
The proof is based on the characterisation (3) of  theorem \ref{allequivalent}.   
First we recall briefly the construction
 of real tree model,  for the details see  \cite{koikeparusinski2}.   
 
 In \cite {kuolu} Kuo and Lu 
 introduced a tree model $T(f)$ of a complex analytic function germ $f(x,y)$.  
This model allows one to visualise the numerical data given by the contact orders between 
the Newton-Puiseux roots of $f$,  in particular their Puiseux pairs, and determines the minimal 
resolution of $f$.  The real tree model of \cite{koikeparusinski2} 
is an adaptation of the Kuo-Lu tree model 
to the real analytic world.  The main differences are the following.  
The Newton-Puiseux roots of $f$ 
\begin{equation}\label{root}
x=\lambda (y)  = a_1 y^{n_1 / N} + a_2 y^{n_2 / N}
+ \cdots 
\end{equation}
are replaced by real analytic demi-branches or their horn neighbourhoods obtained by restricting 
\eqref{root} to $y\ge 0$, and then truncating it the first non-real 
coefficient $a_i$ that is replaced by a symbol $c$ signifying a generic $c\in \R$ 
(in this way we can 
still keep track of the exponent $n_i/N$).  
The later construction can be reinterpreted geometrically 
in the real world 
by taking "root horns".  The Puiseux pairs of the roots, or of the root horns, 
are added to the numerical data of the tree.  Unlike in the 
complex case they can not be computed 
from the contact orders.  Finally,  the signs of  coefficients at the Puiseux characteristic  exponents 
  are marked on the tree.


\subsection{Real tree model of $f$ relative to a tangent direction}

Let $f (x,y)$ be a real analytic function germ.  Fix $v$ a unit vector
of $\R^2$.  
\emph{The tree model 
$\R T_v(f)$ of $f$ relative to $v$ }is defined as follows. 
 Fix any local system of coordinates $x,y$ 
such that: \\
\emph{- $f(x,y)$ is mini-regular in $x$};\\
\emph{- $v$ is of the form  $(v_1,v_2)$ with $v_2 >0$.  }

Let $x=\lambda(y)$ be a Newton-Puiseux root of $f$ of the form
\eqref{root}.  
If $\lambda$ is not real and $a_i$ is the first non-real coefficient we replace this root by 
\begin{equation}\label{truncatedroot}
x=\ a_1 y^{n_1 / N} + a_2 y^{n_2 / N}
+ \cdots + c y^{n_i / N}, \quad c\in \R \quad \text{generic} ,
\end{equation}
where $c$ is a symbol signifying a generic $c\in \R$.  We call \eqref{truncatedroot} 
{\em a truncated root}.  Let $\Lambda_v$ denote the set of real roots
and truncated roots, restricted to $y\ge 0$,  
 that are tangent to $v$ at the origin.

Suppose $\Lambda_v$ non-empty.  We apply the Kuo-Lu construction to $\Lambda_v$.  
We define \emph{the contact order of}  $\lambda_i$ and $\lambda_j$ of $\Lambda_v$ as  
$$
O(\lambda_i,\lambda_j) := \ord_0 \, (\lambda_i - \lambda_j)(y).
$$
Let $h\in \Q$.  We say that $\lambda_i, \lambda_j$ are \emph{congruent modulo}
 $h^+$ if $O(\lambda_i,\lambda_j)>h$.  

Draw a vertical line as the {\it main trunk} of the tree.  
Mark the number $m_v$ of roots in $\Lambda_v$ counted with 
multiplicites alongside
the trunk. 
Let $h_1:= \min \{O(\lambda_i,\lambda_j) | 1\le i,j\le m_v\}$. Then draw a bar, 
$B_1$, on top of the main trunk. Call $h(B_1):= h_1$ the
{\it height} of $B_1$. 

The roots of $\Lambda _v$ with the original coefficient $a_{h_1}$ real are divided into equivalence
classes, called \emph{bunches},  modulo $h_1^+$.  We then represent each equivalence class
by a vertical line segment drawn on top of $B_1$ in the order
corresponding to the order of $a_{h_1}$ coefficients.  
Each is called a {\it trunk}.  If a trunk consists of $s$ roots 
 we say it has {\it  multiplicity} $s$, 
and mark $s$ alongside (if $s=1$ it is usually not marked).   
The other roots of $\Lambda_v$, that is 
those with the symbol $c$ as the coefficient at $y^{h_1}$,   
do not produce a trunk over $B_1$ and disappear at $B_1$.  

Now, the same construction is repeated recursively on each trunk,
getting more bars, then more trunks, etc..
The height of each bar and the multiplicity of a trunk, are defined likewise.  
Each trunk has a unique bar on top of it.  
The construction terminates at the stage where the bar have infinite height, that is 
is on top of a trunk that contain  
a single, maybe multiple, real root of $f$.  

To each bar $B$ corresponds a unique trunk supporting it and a unique bunch of roots $A(B)$ bounded 
by $B$.  
In this way there is a one-to-one correspondence between trunks, bars, 
and bunches.   We denote by $m_B$  the multiplicity of the trunk supporting $B$.  

Whenever a bar $B$ gives a new Puiseux pair to a root of $A(B)$    
 we mark $0$ on $B$.  If a trunk $T'$ growing on $B$ corresponds
 to the roots of $A$ with coefficient $a_{h(B)}=0$, resp. $a_{h(B)}< 0$,  $a_{h(B)}>0$, then 
 we mark $T'$ as growing at $0\in B$, resp. to the left of $0$, to the right of $0$.  
Graphically,  we mark $0\in B$ by identifying it with the point of $B$ 
that belongs to the trunk  supporting $B$.


\smallskip
\subsection{Real tree model of $f$}
The \emph{real tree model 
$\R T(f)$ of $f$} is defined as follow. 
\begin{itemize}
\item 
Draw a bar $B_0$ that is identified with $S^1$.  
 We define $h(B_0)=1$ and call $B_0$ \emph{the ground bar}. 
\item
Grow on $B_0$ all non-trivial $\R T_v(f)$ for $v\in S^1$, keeping the 
clockwise order.
\item
Let $v_1, v_2$ be any two subsequent unit vectors for which $\R T_v(f)$ is nontrivial. 
Mark  the sign of $f$ in the sector between $v_1$ and $v_2$. 
Note that one such sign determines all the other signs between two subsequent  
unit vectors for which $\R T_v(f)$ is nontrivial (passing $v$ changes
this sign if and only if $\Lambda_v$ contains an odd number of roots.) 
\end{itemize}
If the leading homogeneous part $f_m$ of $f$ satisfies 
$f_m\inv (0) =0$ then $B_0$ is the only bar of $\R T(f)$. 

For instance we give below the real tree model of $f(x,y)=x^2-y^3$. 
More examples are presented in section \ref{bilipschitz} below.

\medskip
\epsfxsize=6cm
\epsfysize=3.5cm
$$\epsfbox{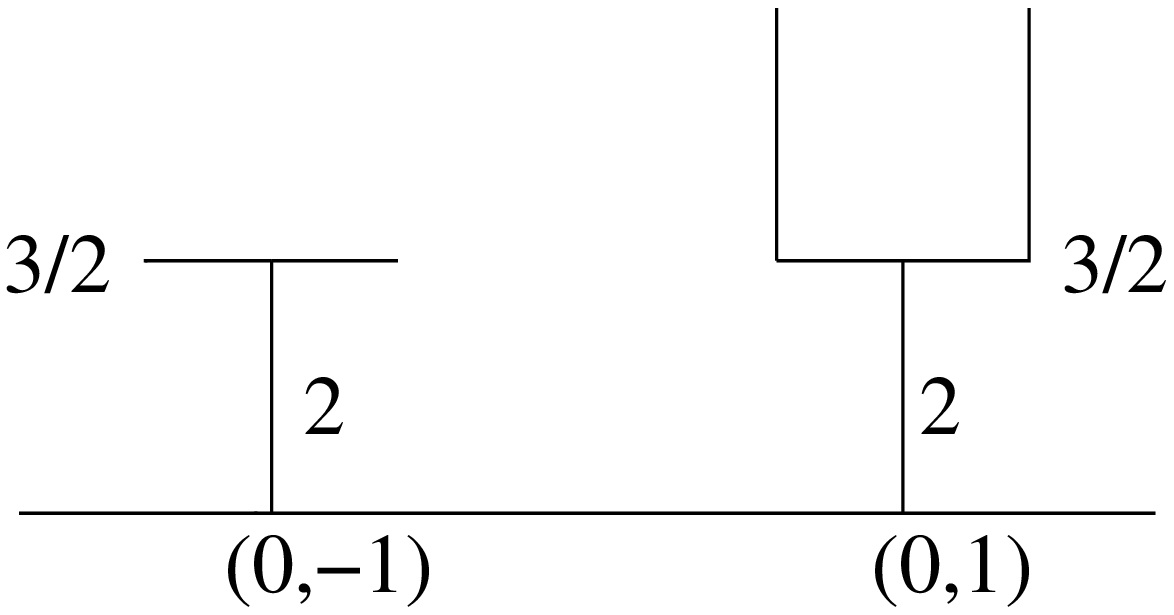}$$


\smallskip
\subsection{Horns}
Recall that for  an allowable real analytic demi-branch $\gamma: x=\lambda (y)$ the
horn-neighbourhood of $\gamma$ 
with exponent $\xi \ge 1$ and width $N > 0$ is given by
$$
H_{\xi}(\gamma ;N) := \{ (x,y); \ |x - \lambda (y)| \le
N|y|^{\xi}, y>0 \} .
$$
We define  {\em the horn-neighbourhood of $\gamma$ of exponent} $\xi$ as 
$H_{\xi}(\gamma ; C)$ for $C$ large and we denote it by $H_{\xi}(\gamma)$.   
\emph{A horn} is a horn-neighbourhood with exponent $\xi >1$.  

If $\gamma_1 : x=\lambda_1(y) $, 
$\gamma_2 : x=\lambda_2(y) $, and $O(\lambda_1,\lambda_2)\ge \xi$ then we identify 
$H_{\xi}(\gamma_1)= H_{\xi}(\gamma_2)$  by meaning that for any $C_1>0$ there is $C_2>0$  such that 
\begin{equation}\label{identify}
H_{\xi}(\gamma_1 ; C_1) \subset H_{\xi}(\gamma_2 ; C_2), \qquad 
H_{\xi}(\gamma_2 ; C_1) \subset H_{\xi}(\gamma_1 ; C_2) .  
\end{equation}

\begin{example}
Let $B$ be a bar  of $\R T_v (f)$, $h(B)>1$.  
Then $B$ defines a horn 
$$
H_B := \{ (x,y); \ |x - \lambda (y)| \le
C|y|^{h(B)} \} ,
$$
where $C$ is a large constant and $x=\lambda (y)$ is any root of bunch $A(B)$.   
\end{example}

\begin{defn}
A horn that equals $H_B$ for a bar $B$ is called \emph{a root horn}.  
\end{defn}

Let $H = H_{\xi}(\gamma),  \gamma: x=\lambda (y)$, be a horn of exponent $\xi$.  Let $\lambda_H(y) $ 
denote the truncation of $\lambda$  at $\xi$, that is $\lambda_H(y) $  is the sum of all terms of $\lambda(y)$ 
of exponent $<\xi$. 
 We define the truncated demi-branch by $\gamma_H : x= \lambda_H(y) $
 and the generic demi-branch   
$\gamma_{H,gen} : x= \lambda_{H,gen}(y)$  by  
\begin{equation}
\lambda_{H,gen } (y) = \lambda_H(y) + c y^{\xi} +  \cdots , \qquad y\ge 0, 
\end{equation} 
where $c\in \R$ is a generic constant.   The \emph {characteristic exponents of $H$} 
are those of $\gamma_{H,gen}$ that are $\le \xi$.   \emph{The signs of 
characteristic coefficients of $H$} are those of $\gamma_{H,gen}$ (or of $\gamma_{H}$)
 corresponding to the exponents $<\xi$.  
Let $\gamma' : x=\lambda' (y)$ be any
 allowable real analytic demi-branch contained in  $H$. 
Then the order function $\ord_{\gamma'} f $, defined by \eqref{genericarc}, restricted to 
$ [1,\xi] $ is independent of the choice of $\gamma'$ and so is the  polynomial  $P_{f,\gamma',\xi'} (z)$ for 
$\xi '<\xi$.   The polynomial $P_{f,\gamma',\xi} (z)$ is independent up to a shift of variable $z$:  if the coefficient 
of $\lambda' (y)$ at $y^{\xi}$ is $a$ then  
$$
P_{f,\gamma',\xi} (z) =P_{f,\gamma_H,\xi} (z+a) .  
$$

\medskip
\begin{prop}\label{conversion}
{\rm [compare  \cite{koikeparusinski2}, Proposition 7.5] } 

Let $H $ be a horn of exponent $\xi$.  Then $H$ is a root horn  for $f(x,y)$ if and only if 
$P_{f,\gamma_H,\xi} (z) $ has at least two distinct complex roots. 

If this is the case, $H=H_B$, then $h(B) = \xi $ and $m_B = \deg P_{f,\gamma_H,\xi_H}$.
\end{prop}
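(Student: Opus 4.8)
The plan is to compute $P_{f,\gamma_H,\xi}$ explicitly from the Newton-Puiseux factorization of $f$ over $\C$ and then translate the resulting data into the language of the real tree model. Since $f$ is mini-regular in $x$ of multiplicity $m$, Weierstrass preparation together with the Newton-Puiseux theorem gives a factorization $f(x,y)=U(x,y)\prod_{k=1}^{m}(x-\lambda_k(y))$, where $U(0,0)\neq 0$ and the $\lambda_k$ are the complex Newton-Puiseux roots listed with multiplicity. Writing $O_k:=\ord_y(\lambda_k-\lambda_H)$ and substituting $x=\lambda_H(y)+zy^{\xi}$ into each factor, I would observe that a factor with $O_k<\xi$ contributes a leading term in $y$ that is independent of $z$, while a factor with $O_k\ge\xi$ contributes $(z-b_k)y^{\xi}+\cdots$, where $b_k$ is the coefficient of $\lambda_k$ at $y^{\xi}$ (so $b_k=0$ exactly when $O_k>\xi$). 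Collecting the lowest order terms in $y$ yields
\begin{equation*}
P_{f,\gamma_H,\xi}(z)=c_0\prod_{O_k\ge\xi}(z-b_k),\qquad c_0\neq 0.
\end{equation*}

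From this formula both numerical assertions are immediate. First, $\deg P_{f,\gamma_H,\xi}=\#\{k:O_k\ge\xi\}$ is the number of complex roots having contact at least $\xi$ with $\gamma_H$, counted with multiplicity; when $H=H_B$ these are exactly the roots in the bunch $A(B)$, so this count equals $m_B$, while $h(B)=\xi$ holds by the very definition of $H_B$. Second, the distinct complex roots of $P_{f,\gamma_H,\xi}$ are precisely the distinct values among the $y^{\xi}$-coefficients $\{b_k:O_k\ge\xi\}$, that is, the coefficients at which the roots passing through $H$ separate at level $\xi$. Note that by the ultrametric inequality any two $\lambda_k,\lambda_l$ with $O_k,O_l\ge\xi$ have mutual contact $\ge\xi$, with equality exactly when $b_k\neq b_l$.

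It then remains to match the condition ``$\ge 2$ distinct values $b_k$'' with the condition ``$H$ is a root horn''. By the Kuo-Lu recursion a finite bar at height $\xi$ with common truncation $\gamma_H$ appears if and only if the roots with contact $\ge\xi$ to $\gamma_H$ do not all share the same coefficient at $y^{\xi}$; such a bar $B$ is exactly one producing $H=H_B$. I would run this through the conjugation symmetry of the $\lambda_k$: if all $b_k$ coincided, their common value would be conjugation-invariant, hence real, so every root in $H$ would continue with the same real $y^{\xi}$-coefficient and no bar would form at $\xi$; conversely two distinct values $b_k\neq b_l$ force either a genuine split of real branches or a conjugate pair peeling off into a truncated root carrying the generic symbol $c$, and in both situations a bar is drawn at $\xi$. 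This yields the equivalence, the degenerate cases (no root, or a single possibly multiple real root, giving $P$ constant or a pure power $c_0(z-b)^s$) corresponding to $H$ not being a root horn.

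The main obstacle I expect is precisely this last paragraph: the factorization formula is complex-analytic, whereas a root horn is defined through the real tree built from real and truncated roots. The delicate point is to verify that the separation of a complex conjugate pair at level $\xi$---recorded in the real model by the generic symbol $c$ rather than by an honest splitting of real branches---corresponds correctly to $b_k\neq b_l$ in $P_{f,\gamma_H,\xi}$, and that the combinatorially defined multiplicity $m_B$ agrees with the complex count $\#\{k:O_k\ge\xi\}$. Once the bookkeeping of reality versus the generic symbol is carried out exactly as in the construction of $\R T_v(f)$, the equivalence follows; this is the real analogue of Proposition 7.5 in \cite{koikeparusinski2}.
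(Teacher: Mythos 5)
Your proposal is correct and follows essentially the same route as the paper: both proofs factor $f$ into its complex Newton--Puiseux roots, substitute $x=\lambda_H(y)+zy^{\xi}$ to obtain $P_{f,\gamma_H,\xi}(z)=c_0\prod_{O_k\ge\xi}(z-b_k)$, and then translate ``at least two distinct roots of $P$'' into the existence of a bar of height $\xi$ via the tree construction. The only cosmetic difference is that the paper re-derives the ``if'' direction through the Newton algorithm (a root of $P$ of multiplicity $s$ lifts to $s$ Newton--Puiseux roots of $f$), whereas you read both directions off the single product formula; the real-versus-complex bookkeeping that you flag as the delicate point is likewise handled in the paper only implicitly, by appeal to ``the construction of the tree.''
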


\begin{proof}
Suppose that $H=H_B$ and let $A(B)= \{\gamma_1, \ldots, \gamma_{m_B}
\} $ be the corresponding bunch of roots.  
These roots are truncations of complex Newton-Puiseux roots of $f$: 
\begin{equation}\label{complexroot}
\gamma_{\C,k}: x= \lambda _k (y) = \lambda_H + a_{\xi,k} y^\xi + \cdots, \quad 1\le k\le m_B  
\end{equation}
with $\lambda_H$ real and $a_{\xi,k}\in \C$.  Denote by $\gamma_{\C,j}: x= \lambda _j (y)$, 
$j=m_B +1, \ldots, m,$ the remaining complex Newton-Puiseux roots of $f$.  Then   
\begin{equation*}
f(\lambda_H (y)+zy^{\xi},y)  = u(x,y) \, \prod_{i=1}^m (\lambda_H (y)- \lambda_i (y)+zy^{\xi})  
= P_{f,\gamma_H,\xi} (z) y^{\ord_{\gamma_H}f(\xi )} + \cdots  , 
\end{equation*}
where $u(0,0)\ne 0$.   Note that $O(\lambda_H, \lambda_j) <\xi$ for $j>m_B$.   Therefore 
\begin{eqnarray*}
&  P_{f,\gamma_H,\xi_H} (z) & = u(0,0)  \, \prod_{i=1}^{m(B)} (z -a_{\xi ,i}),  \\
&  \ord_{\gamma_H}f(\xi_H ) & = m_B\xi_H + \sum_{j=m(B)+1}^m O(\lambda_H, \lambda_j). 
\end{eqnarray*}  
By construction of the tree there are at least two roots $\gamma_i$ and $\gamma_j$ of 
\eqref{complexroot} such that 
$O(\lambda_i, \lambda_j)  = \xi$. Thus $P_{f,\gamma_H,\xi_H} (z) $ has
at least two distinct complex roots.  

Let $H = H_{\xi}(\gamma)$ be a horn,  where 
$$
\gamma : x= \lambda_H(y) + a_\xi y^{\xi} +  \cdots. 
$$  
By the Newton algorithm  for computing the complex Newton-Puiseux roots of $f$ 
to each root $z_0$ of $ P_{f,\gamma,\xi}$ of multiplicity $s$ correspond exactly $s$ 
Newton-Puiseux roots of $f$, counted with multiplicities, of the form  
\begin{equation*}
\gamma_0 : x= \lambda_H(y) + (a_\xi +z_0)  y^{\xi} +  \cdots .
\end{equation*} 
(This is essentially the way the Newton-Puiseux theorem is proved as in \cite{walker}.)  
Thus, if $ P_{f,\gamma,\xi}$ has at least two distinct roots, then there exist at least two such Newton-Puiseux 
roots with contact order equal to $\xi$.  This shows that $H$ is of the form $H_B$, as claimed.  
 \end{proof}

Proposition \ref{conversion} shows that for a root horn $H$ of width $\xi_0$, $\deg P_{f,\gamma_H,\xi}> 1$.  
Moreover, for any (Puiseux) characteristic exponent of $\gamma_H $, $\xi <\xi_0$, the 
horn  $H_{\xi}(\gamma_H)$ is a root horn.  Indeed, if $\xi =  {n_i}/ {d_1 \cdots d_{i}}$, then 
$\deg P_{f,\gamma_H,\xi}= d_i$ 
Therefore we may extend the argument of the proof of Proposition 
\ref {Puiseuxinvariance1} to the root horn case.   

\begin{prop}\label{Puiseuxinvariance2}
Let $H=H_{\xi} (\gamma)$ be a horn root.  
Let $\sigma : (\R^2,0) \to (\R^2,0)$ be a $C^1-$diffeomorphism 
and let $f$, $g : (\R^2,0) \to (\R,0)$ be real analytic function germs
such that $f = g \circ \sigma$. 
Suppose that $\gamma$, $\tilde{\gamma}$ are allowable real analytic demi-branches such that  
$$
\sigma (\gamma ) \subset 
H_{\xi}(\tilde{\gamma};N).$$ 
Then the Puiseux characteristic exponents  $H$ and 
$\tilde H$ coincide.

Moreover, if $D\sigma (0)$ preserves orientation then the signs of
characteristic coefficients 
of $H$ and  $\tilde H$ coincide. 
\end{prop}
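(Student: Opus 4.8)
The plan is to reduce Proposition \ref{Puiseuxinvariance2} to the already-proved Proposition \ref{Puiseuxinvariance1} by running essentially the same inductive argument, but with a single genuine demi-branch replaced by the family of allowable demi-branches sweeping out the root horn $H$. First I would normalise as in the proof of Proposition \ref{Puiseuxinvariance1}: writing $f$ and $g$ via Lemma \ref{initialform} and composing $g$ with the linear map $A$, I may assume $D\sigma(0)=\id$. Composing further with the affine changes $H_1,H_2$ from the proof of Proposition \ref{c1invariant1} (which straighten both tangent directions to the $y$-axis), I reduce to the case where $\gamma_H$ and $\tilde\gamma_{\tilde H}$ have their truncated parts equal to $0$ up to the relevant order; since $\sigma(\gamma)\subset H_\xi(\tilde\gamma;N)$ and $\sigma$ is $C^1$, the hypothesis of Proposition \ref{c1invariant2} is met, so the polynomials $P_{f,\gamma_H,\xi'}$ and $P_{g,\tilde\gamma_{\tilde H},\xi'}$ coincide for all $\xi'<\xi$, and $P_{f,\gamma_H,\xi}$, $P_{g,\tilde\gamma_{\tilde H},\xi}$ share degree and leading coefficient.

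Next I would carry out the induction on characteristic exponents exactly as in Proposition \ref{Puiseuxinvariance1}, but reading the characteristic data off $\gamma_{H,gen}$ rather than off a fixed root. The key point, supplied by Proposition \ref{conversion} together with the remark preceding this statement, is that each candidate characteristic exponent $\xi'=n_i/(d_1\cdots d_i)<\xi$ of $H$ is itself detected as a root horn: $\deg P_{f,\gamma_H,\xi'}=d_i>1$, and Lemma \ref{Puiseux} applies verbatim to give $P_0(z)=z^k\tilde P_0(z^d)$ for the truncated-branch polynomial. Since the characteristic exponents of $H$ are by definition those of $\gamma_{H,gen}$ that are $\le\xi$, and these are exactly the exponents where $\deg P_{f,\gamma_H,\cdot}$ jumps, the equality of the polynomials $P_{f,\gamma_H,\xi'}=P_{g,\tilde\gamma_{\tilde H},\xi'}$ for $\xi'<\xi$ forces the jump locations—hence the characteristic exponents below $\xi$—to agree for $H$ and $\tilde H$. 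The top exponent $\xi$ itself is handled by the degree-and-leading-coefficient statement of Proposition \ref{c1invariant2}: the multiplicity $m_B=\deg P_{f,\gamma_H,\xi}$ is preserved, so $\xi$ is a characteristic exponent of $H$ iff it is one of $\tilde H$.

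For the orientation statement, I would track signs of characteristic coefficients as in the final paragraph of the proof of Proposition \ref{Puiseuxinvariance1}. When $\xi'$ is a genuine characteristic exponent, the coefficient $a_{\xi'}$ is recovered from $P=P_0(\,\cdot\,+a_{\xi'})$ via $a_{\xi'}=A_1/(i_0A_0)$, an expression built from the coefficients of the polynomial $P_{f,\gamma_H,\xi'}$ which Proposition \ref{c1invariant1} (applied along the normalised branches) shows are preserved by $\sigma$ up to the linear isomorphism $A$; such a linear map may rescale these coefficients but cannot change their signs when it is orientation preserving. Thus the signs of the characteristic coefficients of $H$ and $\tilde H$ coincide.

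The main obstacle, and the step demanding care rather than routine computation, is justifying that the root-horn data is faithfully captured by the single truncated-branch polynomials $P_{f,\gamma_H,\xi'}$ and that the set-germ inclusion $\sigma(\gamma)\subset H_\xi(\tilde\gamma;N)$ is inherited at every intermediate exponent $\xi'<\xi$. Concretely, I must verify that truncating $H$ at a lower characteristic exponent produces a smaller root horn whose image under $\sigma$ still lands in the corresponding truncated horn of $\tilde H$, so that Proposition \ref{c1invariant2} can be invoked at each inductive stage; this uses Lemma \ref{inclusion} to propagate the horn inclusions downward in $\xi'$, together with the independence (up to a shift in $z$) of $P_{f,\gamma',\xi'}$ from the choice of allowable $\gamma'\subset H$ noted before the statement. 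Once this compatibility is in place the induction runs as before, and the argument of Proposition \ref{Puiseuxinvariance1} transfers with only notational changes.
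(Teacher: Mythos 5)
Your overall route is the paper's own: the paper gives no free-standing proof of proposition \ref{Puiseuxinvariance2}, but derives it by re-running the induction of proposition \ref{Puiseuxinvariance1}, with the non-constancy of the relevant polynomials (which for roots came from $P(0)=0$, $\gamma\subset f^{-1}(0)$) now supplied by the root-horn property via proposition \ref{conversion} and the remark preceding the statement. Most of your ingredients are the correct ones. However, the step you actually use to match the characteristic exponents contains a genuine error: it is \emph{not} true that the characteristic exponents of $H$ are ``exactly the exponents where $\deg P_{f,\gamma_H,\cdot}$ jumps''. By the factorisation in the proof of proposition \ref{conversion}, $\deg P_{f,\gamma_H,\xi'}$ equals the number of complex Newton--Puiseux roots of $f$ having contact $\ge \xi'$ with $\gamma_H$, so the degree jumps precisely at the contact orders between $\gamma_H$ and the roots of $f$, and these need not be characteristic. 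For instance, let $f=((x-y^2)^2-y^5)(x-2y^2)$ and $H=H_{5/2}(\gamma)$ with $\gamma: x=y^2+y^{5/2}$; this is a root horn with $\gamma_H: x=y^2$. Then $\deg P_{f,\gamma_H,\xi'}=3$ for $\xi'\le 2$ and $=2$ for $2<\xi'\le 5/2$, so the degree jumps at $\xi'=2$, which is not a characteristic exponent of $H$ (its only characteristic exponent is $5/2$). Consequently ``equal polynomials $\Rightarrow$ equal jump locations'' does not yield ``equal characteristic exponents'', and your second paragraph does not prove the first assertion of the proposition.

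The repair is the mechanism you yourself invoke in your third paragraph for the signs, and it is what the paper intends: run the induction of proposition \ref{Puiseuxinvariance1} literally. At a candidate exponent $\xi'$ introducing a new denominator $d>1$, lemma \ref{Puiseux} gives $P_0(z)=z^k\tilde P_0(z^d)$ on each side (the $g$-side uses the inductive agreement of the earlier pairs), and whenever $P=P_0(\,\cdot\,+a_{\xi'})$ is non-constant the coefficient $a_{\xi'}=A_1/(i_0A_0)$ is determined by the coefficients of $P_{f,\gamma_H,\xi'}$. Non-constancy at any $\xi'$ that is characteristic for one of the two horns is exactly what the root-horn property provides, and the polynomials themselves coincide by proposition \ref{c1invariant2} --- note that here and in your sign argument the correct reference is \ref{c1invariant2}, not \ref{c1invariant1}, since you only have the horn inclusion $\sigma(\gamma)\subset H_\xi(\tilde\gamma;N)$, not $\sigma(\gamma)=\tilde\gamma$. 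Hence the recovered coefficients, and in particular whether they vanish, agree on both sides, which is precisely the statement about characteristic exponents; the sign statement then follows as in the last lines of the proof of proposition \ref{Puiseuxinvariance1}. Finally, your ``main obstacle'' paragraph addresses a non-issue: a single application of proposition \ref{c1invariant2} with exponent $\xi$ gives $P_{f,\gamma,\xi'}=P_{g,\tilde\gamma,\xi'}$ for all $\xi'<\xi$ at once, and $P_{f,\gamma,\xi'}=P_{f,\gamma_H,\xi'}$ because $\gamma$ and $\gamma_H$ agree to order $\xi$; no downward propagation of horn inclusions via lemma \ref{inclusion} is needed.
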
 


\subsection{Characterisation of real tree model in terms of root horns}\label{realconstruction}

The real tree model $\R T(f)$ is determined by the root horns and their numerical invariants, cf. \cite{koikeparusinski2} subsection 7.3. 
The root  horns are ordered by inclusion and by  clockwise order  
around the origin.  Thus $H_B$ is contained in $H_{B'}$ if and only if 
the bar $B$ grows over $B'$.   The multiplicity $m_B$ and the height $h(B)$ are expressed in terms of 
invariants of the horn $H_B$ by the formulae of proposition \ref{conversion}.

Let $\gamma: x=\lambda (y)$ be a root of $A=A(B)$.  
Then the Puiseux characteristic exponents of $\gamma$ that are 
$< h(B)$ and the corresponding   signs of characteristic coefficients are those 
of $\gamma_{H_B,gen}$ (or, equivalently, of $\gamma_{H_B}$).  If $\tilde A= A (\tilde B)$ be a sub-bunch of
$A$ containing $\gamma$ then the invariants of $H_{\tilde B}$ determine 
 whether $\gamma$ 
takes a new Puiseux pair at $h(B)$ and, if this is the case, 
 the sign of the characteristic coefficient 
at $h(B)$.


\medskip
\subsection{End of proof of theorem \ref{c1givesblow}}
By propositions  \ref{lipinvariant2} and  \ref {conversion} 
the image of a root horn $H_B$ is a root horn $H_{\tilde B}$.  
Thus obtained one-to-one correspondence $B\leftrightarrow \tilde B$
gives an isomorphism of trees preserving the multiplicities and the heights 
of bars.  
The Puiseux characteristic exponents and the corresponding signs of
Puiseux coefficients are also preserved as follows from \ref{Puiseuxinvariance2}.  
If $\sigma$ preserves the orientation then it preserves the clockwise order of root horns and hence 
the clockwise order on the trees.    
Thus the theorem follows from theorem \ref{allequivalent}.  
\qed




\section {Arbitrary $C^1$ equivalence.}\label{arbitraryc1}

If $f$ and $g$ are $C^1-$equivalent by a  $C^1$ diffeomorphism 
$\sigma$, $f=g\circ \sigma$,  then usualy we 
 compose $f$ or $g$ with a linear isomorphism and assume that
 $D\sigma(0)=Id$.  
Nevertheless,  sometimes, it is necessary to construct 
invariants of the arbitrary $C^1-$equivalence.  This is the case when $f$ and $g$ are 
weighted homogeneous, a property that is usually destroyed by an
arbitrary linear change of 
coordinates.  In this section we construct invariants of the arbitrary
$C^1-$equivalence and apply them to weighted  homogeneous polynomials. 

\smallskip
\begin{prop}\label{c1general}
Let $\sigma : (\R^2,0) \to (\R^2,0)$ be a $C^1-$diffeomorphism  such that 
$D\sigma (0)  (x,y) = (ax+by,cx+dy)$ 
and let $f(x,y)$, $g(x,y)$ be real analytic function germs, mini-regular in $x$, 
such that $f = g \circ \sigma$.
Suppose that $\gamma$, $\tilde{\gamma}$ are allowable real analytic demi-branches and 
that there exist $ \xi_0  > 1, N > 0$ such that
$$
\sigma (\gamma ) \subset 
H_{\xi_0}(\tilde{\gamma};N) . 
$$
Then, for  $\xi \in (1,\xi_0)$, $P_{f,\gamma, \xi}$ and $P_{g, \tilde \gamma,\xi}$ are related by 
\begin{equation}\label{c1polynom}
P_{f,\gamma, \xi} (z) =  (c\lambda'(0) + d)^{\ord_\gamma f(\xi)} P_{g, \tilde \gamma,\xi} (\frac {ad-bc}{(c\lambda'(0) + d)^{\xi +1}} z) . 
\end{equation}
If $\xi =1$ then 
$$
P_{f,\gamma, 1} (z) =  (c\lambda'(0) + d + cz )^{m} P_{g, \tilde \gamma,1} 
(\frac {ad-bc}{c\lambda'(0) + d}  \cdot    \frac {z}{c\lambda'(0) + d
  + cz }). 
$$
\end{prop}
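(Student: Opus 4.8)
The plan is to reduce this general statement, where $D\sigma(0)$ is an arbitrary linear isomorphism $A(x,y)=(ax+by,cx+dy)$, to the already-established case $D\sigma(0)=\mathrm{id}$ treated in propositions \ref{c1invariant1} and \ref{c1invariant2}. First I would factor $\sigma = A \circ \tau$, where $\tau = A^{-1}\circ\sigma$ is a $C^1$-diffeomorphism with $D\tau(0)=\mathrm{id}$. Setting $h = g\circ A$, we have $f = h\circ\tau$, so by proposition \ref{c1invariant2} applied to the pair $(f,h)$ and the demi-branches $\gamma$ and $\tau(\gamma)$, the polynomials $P_{f,\gamma,\xi}$ and $P_{h,\tau(\gamma),\xi}$ coincide for $\xi<\xi_0$ (and share leading term at $\xi_0$). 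The remaining task is therefore purely computational: to express $P_{h,\tau(\gamma),\xi} = P_{g\circ A,\,\cdot\,,\xi}$ in terms of $P_{g,\tilde\gamma,\xi}$, that is, to understand precisely how the linear map $A$ transforms the relative Newton data.

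The heart of the matter is thus a direct computation of how $P$ behaves under a \emph{linear} change of coordinates, which I would carry out explicitly. I would parametrise the generic arc test curve $x=\lambda(y)+zy^\xi$ and apply $A$ to it. The key observation is that, since $\gamma$ is allowable with slope $\lambda'(0)$ and $\xi>1$, the image arc has a leading behaviour governed by $A$ acting on the tangent direction $(\lambda'(0),1)$: the new $y$-coordinate scales like $(c\lambda'(0)+d)\,y$ while the new $x$-coordinate picks up both a tangential shift and a transverse deviation of order $y^\xi$ with coefficient controlled by the determinant $ad-bc$. Substituting this into the defining expansion \eqref{genericarc} for $g$ and matching the order $y^{\ord_\gamma f(\xi)}$ term, I expect the two homogeneity factors $(c\lambda'(0)+d)^{\ord_\gamma f(\xi)}$ (from the $y$-scaling raised to the order) and $(ad-bc)/(c\lambda'(0)+d)^{\xi+1}$ (the rescaling of the transverse variable $z$) to emerge, yielding formula \eqref{c1polynom}. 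For the case $\xi=1$ the analysis is the same in spirit but the tangential and transverse directions are no longer separated to leading order, since the test curve $x=zy$ and the linear action mix the variable $z$ into both output coordinates; carefully tracking this mixing gives the more complicated Möbius-type substitution and the factor $(c\lambda'(0)+d+cz)^m$, where $m=\deg P_{f,\gamma,1}=\mathrm{mult}_0 f$.

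I expect the main obstacle to be the bookkeeping in the $\xi=1$ case rather than any conceptual difficulty. When $\xi>1$ the leading-order images of the tangential and transverse components decouple cleanly, so the substitution is a pure rescaling and the computation is transparent. At $\xi=1$, however, both $zy$ and $\lambda'(0)y$ contribute to the same order, so applying $A$ produces a new transverse coordinate whose relationship to $z$ is a fractional-linear expression; one must verify that the denominator $c\lambda'(0)+d+cz$ does not vanish generically and that substituting it back reproduces the initial homogeneous form correctly. The verification that the two formulas are consistent in the limit $\xi\to 1^+$, and that the $\xi=1$ formula is compatible with the $C^1$-invariance of the initial form $f_m$ established in lemma \ref{initialform}, would serve as a useful internal check on the algebra.
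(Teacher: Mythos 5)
Your proposal follows essentially the same route as the paper: the paper also invokes Proposition \ref{c1invariant2} to reduce to the case of a linear $\sigma$ (your factorisation $\sigma=A\circ\tau$ is just the explicit form of that reduction), and then performs exactly the computation you describe, substituting $Y=c(\lambda(y)+zy^{\xi})+dy$ into the test arc $x=\lambda(y)+zy^{\xi}$ to extract the factors $(c\lambda'(0)+d)^{\ord_{\gamma}f(\xi)}$ and $(ad-bc)/(c\lambda'(0)+d)^{\xi+1}$, with the $\xi=1$ M\"obius-type case left to the reader. The only cosmetic point is that $\tau(\gamma)$ need not be an analytic demi-branch, so the intermediate curve should be taken as $A^{-1}(\tilde\gamma)$ rather than $\tau(\gamma)$; this changes nothing in the argument.
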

\medskip

\begin{example}\label{arnold}  Consider the family  
$$
A_t(x,y) = x^3 - 3 t x y^4 + 2 y^6.
$$
This family is equivalent to the family $J_{10}$ of \cite{AVG}.  
For each $t$, $A_t$ is mini-regular in $x$, and
$\frac {\partial A_t} {\partial x} = 3(x^2 - ty^4)$.
For $t>0$ let us consider the Newton polygon of $A_t$ relative to a polar curve
$\gamma_t : x = \sqrt t\,y^2$.
Then we have
$$
A_t(X+\sqrt t\,Y^2,Y) = X^3 + 3\sqrt t\,X^2Y^2 +
2(1 - t\sqrt t\,)Y^6 , 
$$
and 
$$
P_{\Gamma_t}(z) = z^3 + 3\sqrt t\,z^2 + 2(1 - t\sqrt t\,).
$$
Suppose that for $t$, $t^{\prime} \in (0,\infty)$,
there are $\alpha$, $\beta \ne 0$ such that
$P_{\Gamma_{t^{\prime}}}(z) = \beta^6 P_{\Gamma_t}
({\frac \alpha { \beta^3}}z)$.
By an easy computation, we obtain $\alpha^2 = \beta^2 = 1$ and that 
$P_{\Gamma_t} \equiv P_{\Gamma_{t^{\prime}}}$
up to a multiplication if and only if $t = t^{\prime}$ in this case.
\end{example}

\begin{proof}
By Proposition \ref{c1invariant2} it suffices to consider only the case of $\sigma $ linear   
\begin{equation*}
 (\tilde{x},\tilde{y})  = \sigma (x,y) = (ax+by,cx+dy),  \quad \det \sigma =ad-bc \ne 0 .  
\end{equation*}
and $\tilde \gamma = \sigma (\gamma)$.  Let $\gamma : x = \lambda (y)$,  $\tilde \gamma : x = \tilde  \lambda (y)$.  Then 
\begin{equation}
\lambda (y)a + by = \tilde{\lambda} (c\lambda (y)+dy),  
\end{equation} 
$c\lambda (y) + dy$ parametrises the positive 
$y$-axis,  and 
\begin{equation*}
\tilde{\lambda}^{\prime}(0) = \frac {a\lambda^{\prime}(0)+  b} {c \lambda^{\prime}(0)+d} ,  \qquad c\lambda^{\prime}(0) + d > 0 .
\end{equation*}

Fix $\xi >1$.  Clearly $\ord_\gamma f(\xi) =\ord_{\tilde \gamma} g(\xi)$.   
Put $Y = c(\lambda (y) + zy^\xi) + dy $.  
Then $Y = (c\lambda' (0) + d)y + o(y)$ and $y = (c\lambda' (0) + d)\inv Y + o(Y)$, and consequently   
\begin{eqnarray*}
& f (\lambda (y) + z y^\xi,y) & = g (a (\lambda (y) + z y^\xi) + by  , c(\lambda (y) + z y^\xi)+dy) \\
& &
= g ( \tilde \lambda (c\lambda (y)+ dy) +azy^\xi , Y) \\
& & 
= g (  \tilde \lambda (Y) - \tilde \lambda'(0) czy^\xi + azy^\xi  + o(y^\xi)  ,Y) \\
& & 
= g (  \tilde \lambda (Y)  +  \frac {ad-bc} { (c\lambda'(0) +d)^{\xi+1} } zY^\xi + o(Y^\xi)   ,Y) .  
\end{eqnarray*}
Hence, comparing this formula with \eqref{genericarc}, we get 
$$
P_{f,\gamma, \xi} (z) y ^{\ord_\gamma f(\xi)}  =  
P_{g, \tilde \gamma,\xi} (\frac {ad-bc}{(c\lambda'(0) + d)^{\xi +1}} z) Y^{\ord_{\tilde \gamma} g(\xi)} , 
$$
that gives \eqref{c1polynom}. 
The case $\xi=1$ is left to the reader.  
\end{proof}

\begin{cor}\label{coordsystem}
 Given an analytic function germ $f : (\R^2,0) \to (\R,0)$
and a real analytic demi-branch $\gamma$.
We say $(x,y)$ is an {\em admissible system of local analytic
  coordinates  for $f$ and $\gamma$} if $\gamma$ is allowable and 
 $f(x,y)$ is mini-regular in $x$.   
Then  $NB_{\gamma}f$ is independent of the choice of admissible
coordinate systems. 
Moreover, for each edge $\Gamma \subset NB_{\gamma}(f)$ with slope smaller than $-1$,
the polynomial $P_{\Gamma} (z) =  \sum_{(i,j) \in \Gamma} c_{ij} z^i    $  
is well-defined up to left and right multiplications  
as in \eqref{c1polynom}.
\end{cor}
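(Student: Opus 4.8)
The plan is to realise a change of admissible coordinate system as an instance of the relation $f = g \circ \sigma$ already studied in sections \ref{invariantbi-L} and \ref{arbitraryc1}, and then to deduce both assertions directly from Corollary \ref{lipinvariant1} and Proposition \ref{c1general}. Concretely, I would let $(x,y)$ and $(\tilde x,\tilde y)$ be two admissible systems for $f$ and $\gamma$, and let $\sigma : (\R^2,0)\to(\R^2,0)$ be the analytic transition diffeomorphism carrying the first system to the second. Writing $f(x,y)$ for the expansion of the germ in the first system and $g(\tilde x,\tilde y)$ for its expansion in the second, we have $f = g\circ\sigma$; moreover $\tilde\gamma := \sigma(\gamma)$ is exactly the demi-branch $\gamma$ expressed in the new coordinates. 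By admissibility, $\gamma$ and $\tilde\gamma$ are allowable and $f$, $g$ are mini-regular in $x$ in their respective systems. Since $\sigma$ is analytic it is in particular bi-Lipschitz and $C^1$, and $\sigma(\gamma)=\tilde\gamma$ as set germs at the origin.

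The first assertion then follows at once from Corollary \ref{lipinvariant1}: applied to the bi-Lipschitz map $\sigma$ with $\sigma(\gamma)=\tilde\gamma$, it yields $\ord_\gamma f(\xi)=\ord_{\tilde\gamma}g(\xi)$ for all $\xi\ge 1$ and hence $NB_{\gamma}f = NB_{\tilde\gamma}g$. As $NB_{\tilde\gamma}g$ is precisely the Newton boundary of $f$ relative to $\gamma$ computed in the second coordinate system, this shows that $NB_{\gamma}f$ is independent of the admissible system chosen.

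For the second assertion I would fix an edge $\Gamma\subset NB_{\gamma}f$ of slope $-\xi$ with $\xi>1$ (slope smaller than $-1$); by the defining expansion \eqref{genericarc}, the polynomial attached to $\Gamma$ is $P_{\Gamma}(z)=P_{f,\gamma,\xi}(z)$. Because $\sigma(\gamma)=\tilde\gamma$ exactly, one has $\sigma(\gamma)\subset H_{\xi_0}(\tilde\gamma;N)$ for every $\xi_0$ and $N$, so Proposition \ref{c1general} applies with $\xi_0$ taken arbitrarily large and gives, for all $\xi>1$, the relation \eqref{c1polynom} between $P_{f,\gamma,\xi}$ and $P_{g,\tilde\gamma,\xi}$, depending only on the linear part $D\sigma(0)$ and on $\lambda'(0)$. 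This is exactly the freedom ``up to left and right multiplications'' asserted in the statement, and it shows that the invariant content of $P_{\Gamma}$ (its degree, and its roots up to the indicated rescaling) is intrinsic to $f$ and $\gamma$.

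The step requiring the most care is verifying that the transition map genuinely fits the hypotheses of Proposition \ref{c1general}, in particular the orientation condition $c\lambda'(0)+d>0$ used there. This is precisely where admissibility enters: both systems parametrise the demi-branch by $y\ge 0$, so $\sigma$ carries the positive $y$-direction along $\gamma$ to the positive $\tilde y$-direction along $\tilde\gamma$, which forces $c\lambda'(0)+d>0$ and makes \eqref{c1polynom} the correct normalisation. The restriction to slopes smaller than $-1$ (equivalently $\xi>1$) is essential rather than technical: for the edge corresponding to $\xi=1$ the polynomial $P_{f,\gamma,1}$ is governed by the initial homogeneous form and the tangent direction of $\gamma$, and transforms by the more complicated rule recorded for $\xi=1$ in Proposition \ref{c1general}; only away from that edge does the clean multiplicative relation \eqref{c1polynom} hold.
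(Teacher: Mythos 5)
Your proof is correct and is essentially the paper's intended argument: the corollary is stated immediately after Proposition \ref{c1general} precisely because it follows by viewing the analytic transition map between two admissible systems as a bi-Lipschitz (for Corollary \ref{lipinvariant1}, giving invariance of $NB_\gamma f$) and $C^1$ (for Proposition \ref{c1general}, giving the relation \eqref{c1polynom} for every $\xi>1$) change of variables with $\sigma(\gamma)=\tilde\gamma$. Your observations that exact equality of set germs puts $\sigma(\gamma)$ in every horn $H_{\xi_0}(\tilde\gamma;N)$, and that allowability in both systems forces $c\lambda'(0)+d>0$, are exactly the points that make the cited results applicable.
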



\subsection{$C^1-$equivalent weighted homogeneous functions}  
Using Propositions \ref{c1general} and 
 \ref{lipinvariant2} we give below complete bi-Lipschitz and $C^1$ classifications of 
weighted homogeneous two variable function germs.  

Let $f(x,y)$ be a weighted homogeneous polynomial with weights 
$q,p$, $1\le p\le q$, $(p,q)=1$, and weighted degree $d$.  We may write    
\begin{equation}\label{formweighted}
f(x,y) = y^l (x^{d'/q} + \sum _{qi+pj=d'} a_{ij} x^i y^j) =  y^{d/p} P(x/y^\xi), 
\end{equation}
where $d'=d-pl$, and $\xi = q/p$. $P(z):= f(z,1)$ is the associated one variable polynomial. 
We distinguish the following three cases:
\begin{enumerate}[(A)] 
\item
homogeneous : $p=q=1$;
\item
$1=p<q$;
\item
$1<p<q$.
\end{enumerate}
In each of these cases we call the following polynomials \emph{monomial-like}: 
\begin{enumerate}[(Am)]
\item
$A(ax+by)^k (cx+dy)^l$,  $ad-bc\ne 0$;
\item
$A(x+by^q)^k y^l$;
\item
$A x^k y^l$.
\end{enumerate}

\begin{prop}\label{weighted}
Let $f(x,y)$ and $g(x,y)$ be weighted homogeneous polynomials and 
\begin{enumerate}
\item
suppose that $f$ and $g$ are bi-lipschitz equivalent. Then 
\begin{enumerate}
\item
If $f$ is monomial-like then so is $g$.  Then $f$ and $g$ are analytically equivalent. 
\item
if $f$ is not monomial-like then $f$ and $g$ have the same weights and weighted degree.  
\end{enumerate}
\item
suppose that $f$ and $g$ are $C^1$ equivalent and not monomial-like.  Fix the weights  
$q, p$, $1\le p\le q$, $(p,q)=1$.  Then 
\begin{enumerate}
\item
In case (A), $f$ and $g$ are linearly equivalent.  
\item
In case (B), there exist $c_1\ne 0,c_2 \ne 0$, and $b$ such that 
$$
f(x,y) = g(c_1x-by^q ,c_2y).
$$
\item
In case (C), there exist $c_1\ne 0,c_2 \ne 0$ such that 
$$
f(x,y) = g(c_1x,c_2y).
$$
\end{enumerate}
\end{enumerate}
\end{prop}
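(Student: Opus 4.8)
The plan is to reduce everything to the \emph{associated one variable polynomials} $P_f(z)=f(z,1)$ and $P_g(z)=g(z,1)$ and to the single compact edge they produce. If $f$ is weighted homogeneous with weights $q,p$ and weighted degree $d$, then relative to the $y$-axis $\underline 0$ every monomial of $f(X,Y)$ lies on the line $qi+pj=d$, so $NB_{\underline 0}f$ is a single edge of slope $-q/p=-\xi$ and $P_{f,\underline 0,\xi}(z)=P_f(z)$, while for each $\xi'<\xi$ the polynomial $P_{f,\underline 0,\xi'}$ is just the monomial carried by the top $x$-power; thus all non-trivial data lives at the single exponent $\xi=q/p$. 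I would first record the dictionary that $f$ is monomial-like precisely when $P_f$ has the degenerate root structure of its case, equivalently precisely when the weight system making $f$ weighted homogeneous is not unique, so that a non-monomial-like germ determines $(q,p)$ and $d$.

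For part (1) I would use that $\mult_0$ is a bi-Lipschitz invariant and that, by Corollary \ref{lipinvariant1} and Proposition \ref{lipinvariant2}, a bi-Lipschitz $\sigma$ sends each real branch of $f\inv(0)$ into a horn of a branch of $g\inv(0)$ while transporting the order function and the Newton boundary. For the non-monomial-like case (1b) I would take $\gamma$ a branch (or root horn) of $f$ and $\tilde\gamma=\sigma(\gamma)$: then $NB_\gamma f=NB_{\tilde\gamma}g$ forces the same edge, hence the same slope $-q/p$, and with $\mult_0 f=\mult_0 g$ the same $(q,p)$ and $d$. For the monomial-like case (1a) I would argue directly, since such germs have at most two branches with prescribed multiplicities; $\sigma$ then forces $g$ to carry the same branch and multiplicity data, so $g$ is monomial-like, and as every monomial-like germ of a given case is analytically equivalent to $x^ky^l$ (absorb the $-by^q$, or the linear change of case (A)), matching $\{k,l\}$ gives the analytic equivalence.

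For part (2) the first step is to put $D\sigma(0)$ in a normal form. In cases (B) and (C) the initial homogeneous form of a non-monomial-like weighted homogeneous germ is the monomial $x^ny^l$ (minimise ordinary degree along the edge, using $q>p$); by Lemma \ref{initialform} the linear part $A=D\sigma(0)$ must carry it to a proportional monomial, which forces $A$ to preserve the $y$-axis (the axis-swap is excluded, as it would make $g$ weighted homogeneous for the distinct weights $(p,q)$ and hence a monomial). Case (A) is then immediate: there $f=f_m$ and $g=g_m$, so Lemma \ref{initialform} gives linear equivalence at once. For (B) and (C) I would apply Proposition \ref{c1general} along $\gamma=\underline 0$, where $\lambda'(0)=0$ and any residual shear disappears from \eqref{c1polynom}; this relates $P_f$ and $P_g$ by $P_f(z)=d^{\ord_{\underline 0}f(\xi)}P_g(a\,d^{-\xi}z+\beta)$, with $\beta$ the $y^\xi$-coefficient of the comparison branch. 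Re-homogenising gives $f(x,y)=C\,g(\alpha x+\beta y^\xi,\,y)$, and here the arithmetic of $\xi=q/p$ decides the shift: in case (C), $\xi\notin\Z$, so the right side is a polynomial only when $\beta=0$, yielding $f(x,y)=g(c_1x,c_2y)$; in case (B), $\xi=q\in\Z$, the term $\beta y^q$ is admissible and one gets $f(x,y)=g(c_1x-by^q,c_2y)$. The constants are fixed by absorbing $C$ into a rescaling of $y$, using that $g$ is weighted homogeneous of the degree $d$ provided by part (1).

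The step I expect to be the main obstacle is reaching the critical exponent $\xi=q/p$, which is exactly the slope of the single edge. Proposition \ref{c1general} gives the \emph{full} identity \eqref{c1polynom} only for $\xi$ strictly below the contact exponent of $\sigma(\gamma)$ with its target, whereas at the edge slope itself Proposition \ref{c1invariant2} yields only equality of degrees and of leading coefficients, which is too little to recover $P_f$. The remedy is to run the comparison not along the $y$-axis but along a genuine real root $\gamma\subset f\inv(0)$ tangent to the $y$-axis, so that $\sigma(\gamma)$ is a branch $\tilde\gamma\subset g\inv(0)$, the contact is infinite, and Proposition \ref{c1general} applies at $\xi=q/p$. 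When $P_f$ has no real root this direct choice is unavailable, and I would instead pass through root horns, invoking Propositions \ref{conversion} and \ref{Puiseuxinvariance2} (a non-monomial-like germ has $\deg P_f\ge 2$ with at least two distinct complex roots, hence a genuine root horn of height $q/p$). What remains is bookkeeping: tracking the shift $\beta$ and the constant $C$ to land on the exact normal forms and, for the orientation-sensitive refinements, the signs of the characteristic coefficients.
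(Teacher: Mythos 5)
Your main-case argument is essentially the paper's: when $P_f$ has a real root $a$, the demi-branch $\gamma : x = a y^{q/p}$ lies in $f^{-1}(0)$, so $\sigma(\gamma)$ is a real analytic root of $g$ and Proposition \ref{c1general} applies at the critical exponent $\xi = q/p$ (you correctly identified why the comparison must run along an actual root rather than along the $y$-axis). The resulting affine relation between $P_f$ and $P_g$ is then analysed as in the paper; your case (C) argument that the shift must vanish because $q/p \notin \Z$ is a correct variant of the paper's argument, which compares the arithmetic means of the complex roots using $P(z)=z^lQ(z^p)$ --- both amount to inspecting the sub-leading coefficient. Parts (1a) and (2a) also match the paper.

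The genuine gap is the case the paper isolates as its ``Special Case'': $f$ and $g$ non-monomial-like but every real root of $f$ and of $g$ has a Newton boundary of type (ii) (slope $-1$ only), which includes germs with isolated zeros such as $x^4+y^{10}$, where $P_f$ has no real root. There you propose to take ``a branch (or root horn)'' in (1b) and to ``pass through root horns, invoking Propositions \ref{conversion} and \ref{Puiseuxinvariance2}'' in (2). But Propositions \ref{lipinvariant2}, \ref{c1general} and \ref{Puiseuxinvariance2} all carry the hypothesis $\sigma(\gamma)\subset H_{\xi_0}(\tilde\gamma;N)$ for some \emph{analytic} demi-branch $\tilde\gamma$ with $\xi_0$ at least the exponent under study, and when $\gamma$ is the defining curve of a root horn not contained in $f^{-1}(0)$ nothing provides this: a bi-Lipschitz, or even $C^1$, map controls the image of such a curve only to first order, so its image need not lie in any horn of exponent $>1$ around an analytic arc (consider a bi-Lipschitz self-equivalence of $x^4+y^{10}$ obtained by sliding along the level ovals). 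Hence root horns cannot be fed into those propositions, and this is exactly why the paper switches to polar curves in this case: it shows the polar roots of $f$ lie in $U_\varepsilon(f)=\{r\varepsilon\|\grad f(x,y)\|\le |f(x,y)|\}$, invokes the Henry--Parusi\'nski invariance $\sigma(U_\varepsilon(f))\subset U_{\varepsilon'}(g)$ of \cite{henryparusinski1,henryparusinski2} to force the image of a horn neighbourhood of a polar root of $f$ into a horn neighbourhood of a polar root of $g$, and only then reads off the weights (part (1)) and runs the Proposition \ref{c1general} comparison along polar roots (part (2)). Your proposal contains no substitute for this mechanism, so the Special Case remains unproved.
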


\begin{proof} 
Let $f$ be weighted homogeneous and let $\gamma$ be a demi-branch of a
root of $f$.  In this proof we shall call 
such $\gamma$ simply \emph{a root of $f$} for short.  
First we list all possiblities for 
the Newton boundary  $NB_{\gamma}f$ in an admissible system of coordinates, cf. corollary
\ref{coordsystem}.  
Note that in such a system of coordinates 
 $f$ may not be weighted homogeneous.  We denote  $m=\mult_0 f$ 
and by $m_\gamma$ the multiplicity of the root. 
\begin{enumerate}[(i)]
\item
If $f$ is monomial-like with $k=0$ or $l=0$ then  $m=m_\gamma$ and
$NB_{\gamma}f$ has only one vertex at $(m,0)$.
\item
If $f$ is monomial-like with $k\ne 0$ and $l\ne 0$, or homogeneous and not monomial-like,  
then $NB_{\gamma}f$ 
has two vertices at $(m,0)$, $(m_\gamma, m-m_\gamma)$, and hence one nontrivial compact edge of slope 
$-1$.   This is also the  Newton boundary for a not monomial-like non-homogeneous $f$ of the 
form \eqref{formweighted} and the root $y=0$.   
\item
If $f$ is not homogeneous and not monomial-like, of the 
form \eqref{formweighted} and $\gamma$ is not in $y=  0$, then we have two possiblities:
\begin{enumerate}
\item
If $l=0$ then $NB_{\gamma}f$ has one nontrivial edge of slope $-\xi$ and vertices $(m,0)$, 
$(m_\gamma, \xi (m-m_\gamma))$. 
\item
If $l\ne 0$ then $NB_{\gamma}f$ has two nontrivial edges: $\Gamma_1$ of slope $-1$ and vertices $(m,0)$, 
$(m-l,l)$, and $\Gamma_2$  of slope $-\xi$ and vertices 
$(m-l,l)$, $(m_\gamma , \xi (m-l-m_\gamma))$. 
\end{enumerate}
\end{enumerate}

\vspace{2mm}
\epsfxsize=14cm
\epsfysize=4cm
\begin{equation*} 
\epsfbox{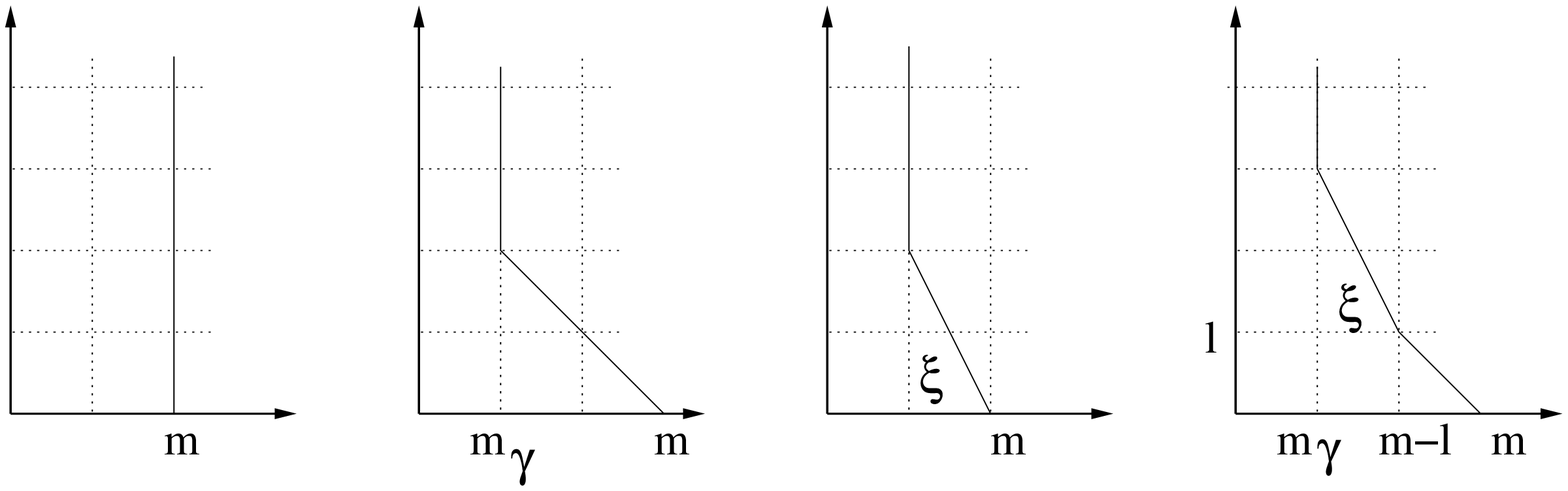}\end{equation*}
 
\vspace{2mm}

Let $f=g\circ \sigma$, $\sigma$ bi-Lipschitz.  Then $m=\mult_0 f
=\mult_0 g$ and $m_\gamma = m_{\tilde \gamma}$ if $\tilde \gamma =
\sigma (\gamma)$ for a root $\gamma$ of $f$.  
Moreover, $\sigma$ 
preserves the tangency of roots.  Therefore  $f$ is monomial if and only if it satisfies the following, 
bi-Lipschitz invariant, property:  $f$ has $s=2$ or $4$ roots (demi-branches), mutually not tangent, 
with the sum of multiplicities equal to $2 \mult_0 f$.   This shows (1a).  

If $f$ has a root $\gamma$ such that $NB_{\gamma}f$ contains an edge of slope $-\xi < -1$ then so does 
$g$, and $f$ and $g$ have the same weights.  Since the weighted degree can be also read from  
$NB_{\gamma}f$, they have the same weighted degree as well.  Thus to finish the proof of (1) it suffices 
to consider the following case.  

\smallskip
\emph{Special Case.}
Suppose that $f$ and $g$ are not monomial-like, and that for every root
 $\gamma$ and $\tilde \gamma$ of $f$ and $g$ respectively,
 $NB_{\gamma}f$ and $NB_{\tilde \gamma}g$ are of the form (ii).  
(This includes the case where both $f$ and $g$
 have isolated zero at the origin.)    In this case we shall replace the roots by horn neighbourhoods of 
 polar curves.  
 
 Suppose that the weights $q,p$ of $f$ satisfy $\xi = q/p>1$.  Write $f$ 
as in  \eqref{formweighted}.  
  Denote by $f_m$ and $g_m$ the 
leading homogeneous part of $f$ and $g$ respectively. 
The real analytic demi-branches $\delta$ tangent to a root of $f_m$ are distinguished  
by the size of $f$ on them, 
$f(x,y) = o(\| (x,y)\| ^m)$ for $(x,y)\in \delta$.
The positive (or similarly negative) $y$-axis is   
in the zero set of $f_m$ and is not tangent to any root of $f$.    Hence its image by 
$\sigma$ is in a horn 
neighbourhood of a root of $g_m$ that is not tangent to any root of $g$. Hence $g\ne g_m$,  
that is $g$ is not homogeneous.

By assumption, $P(z) = f(z,1)$ has no real root, and therefore $P'$ must have one.   If $P'(a)=0$ then the curve 
$\gamma_a : x= ay^\xi, y\ge 0,$ is a polar root of $f$ that is 
$$
\frac {\partial f} {\partial x}(ay^\xi, y)\equiv 0.
$$
Consider the germ at the origin of 
\begin{equation}
U_\varepsilon (f) = 
\{(x,y) \in \R^2 ;\, r \varepsilon \|\grad f (x, y)\| \le |f(x,y)| \} , 
\end{equation}
where $r=\|(x,y)\|$ and $\varepsilon >0$.   If $\varepsilon$ is sufficiently small then each polar root  
$\gamma_a $  is in  $U_\varepsilon (f)$.  Indeed, then  
$$
\|\grad f (ay^\xi, y)\|  = | \frac {\partial f} {\partial y}(ay^\xi, y)| = (d/p)  | P(a) y^{d-1} +\cdots | 
\simeq  r \inv (d/p) | f(ay^\xi, y)| .
$$ 
In general, if a real analytic demi-branch  
$$ \delta: x=\lambda (y) = a_\xi y^\xi + \sum_{i> N \xi} a_{i/N}y^{i/N} , \quad y\ge 0, 
 $$
is contained in $U_\varepsilon (f)$, then $P'(a_\xi)=0$ and  $a_{i/N}=0$ for  $\xi <i/N < 2\xi-1$.  
Hence $\delta$ is contained in a horn neighbourhood $H_{\mu}(\gamma_{a_\xi},M)$, with $\mu >\xi$.   
Consequently any local  (at the origin) connected component $U'$ of $U_\varepsilon (f) \setminus (0,0)$  
satisfies one of the following  properties: 
\begin{itemize}
\item
$U'$ is contained in a horn neighbourhood of a polar root $H_{\mu}(\gamma_a,M)$.  
 Then $f(x,y) \sim r^{d/p}$ on $U'$.  ($d/p>d/q=m$)
 \item
 $l>0$, c.f. \eqref{formweighted}, and $U'$ contains a real analytic demi-branch tangent to $y=0$ that is 
 a root of $f$.  
\item 
Otherwise $f(x,y) \sim r^m$ on any real analytic demi-branch in $U'$.  
\end{itemize}
By \cite{henryparusinski1} and \cite{henryparusinski2}, $\sigma (U_\varepsilon(f)) \subset 
U_{\varepsilon'}(g) $ and so the image of a local connected component of the first type has to be 
contained in a horn neighbourhood of a polar curve of $g$.   Thus the special case follows from
the following observation.   For any real analytic demi-branch $\delta$ in a horn neighbourhood 
$H_{\mu}(\gamma_a,M)$ of a polar curve  $\gamma_a$ of $f$, with $\mu >\xi$, the Newton boundary
 $NB_{\delta}f$ is independent of $\delta$ (we use $P(a)\ne 0$) and is of the form (iii).   
 This ends the proof of Special Case and completes the proof of (1).  
 \medskip

Now we show (2) of the proposition. (a) follows from lemma \ref{initialform}.  
Suppose $f$ is in the form \eqref{formweighted} with $\xi =q/p>1$.  We assume that 
$P$ has a root.  The proof in Special case is similar, one uses the polar roots instead of 
the roots.  
Let $P(a) =0$.  Then $\gamma : x= ay^\xi, y\ge 0,$ is a root of $f$ and 
 $\tilde \gamma = \sigma (\gamma)$ is a root of $g$.   
Replacing $g(x,y)$ by $g(-x,-y)$, if necessary, we 
may suppose that $\tilde \gamma : x=\tilde a y^\xi, y\ge 0$.     

 Let $\tilde P (z) = g(z,1)$.  Then $\tilde P(\tilde a)=0$.  Since $\sigma$ is $C^1$,  
by Proposition \ref{c1general},  $P_{f,\gamma,\xi}$ and $P_{g,\tilde \gamma,\xi}$ coincide up to 
the left and right multiplications. 
Multiplying $x$ by a positive constant, if necessary, we may suppose that 
\begin{equation}\label{polrel}
P(z-a) = \tilde P (\alpha(z-\tilde a)).
\end{equation}
For $p=1$ this gives $f(x,y) = g(c_1x-by^\xi ,c_2y)$ (taking into account of the changes 
we have made already) and ends the proof of (2b).

If $p>1$ then  
\begin{equation*}
P(z) = z^l Q(z^p),  \tilde P(z) = z^l \tilde Q(z^p).
\end{equation*}
and therefore the arithmetic mean of complex roots of $P$, and the one of the roots of $\tilde P$, 
equals $0$.   By \eqref{polrel},  if $z$ is a complex root of $P$ then 
$\alpha (z+a-\tilde a)$ is a root of $\tilde P$.  Thus by comparing both 
arithmetic means we get $a=\tilde a$. Consequently, 
$P(z-a) = \tilde P (\alpha(z-a))$ or,  
by replacing $z-a$ by $z$, $P(z) = \tilde P (\alpha z)$, and hence we may 
conclude finally that 
$$
f(x,y) = g(c_1x,c_2y). 
$$ 
This ends the proof of proposition \ref{weighted}.  
\end{proof}




\bigskip
\section{Bi-Lipschitz equivalence does not imply blow-analytic equivalence}
\label{bilipschitz}
\medskip

In this section we present several examples of bi-Lipschitz equivalent real analytic
function germs that are not blow-analytically equivalent. In order to
distinguish different blow-analytic types we use either the real tree
model of \cite{koikeparusinski2} or the Fukui invariants.  
Recall the definition of Fukui invariants of blow-analytic equivalence, c.f.  \cite{fukui}.
Let $f : (\R^n,0) \to (\R,0)$ be an analytic function germ.
Set
$$
A(f) := \{ \ord (f(\gamma (t))) \in \N \cup \{ \infty \} 
; \gamma : (\R,0) \to (\R^n,0) \ C^{\omega} \} .
$$ 
Let $\lambda : U \to \R^n$ be an analytic arc with $\lambda (0) = 0$, 
where $U$ denotes a neighbourhood of $0 \in \R$. 
We call $\lambda$  {\it nonnegative} (resp. {\it nonpositive}) 
{\it for} $f$ if 
$(f \circ \lambda)(t) \geq 0$ (resp. $\leq 0$) 
in a positive half neighbourhood $[0,\delta) \subset U$. 
Then we set

\vspace{3mm}

\qquad $A_+(f) := \ \{ \ord (f \circ \lambda) ; \lambda$
is a nonnegative arc through $0$ for $f \}$, 

\qquad $A_-(f) := \ \{ \ord (f \circ \lambda) ; \lambda$ 
is a nonpositive arc through $0$ for $f \}$.

\vspace{3mm}

\noindent Fukui proved that $A(f)$, $A_+(f)$ and $A_-(f)$ are
blow-analytic invariants.
Namely, if analytic functions $f, g : (\R^n,0) \to (\R,0)$
are blow-analytically equivalent, then $A(f) = A(g)$,
$A_+(f) = A_+(g)$ and $A_-(f) = A_-(g)$.
We call $A(f)$, $A_{\pm}(f)$ {\em the Fukui invariant},
{\em the Fukui invariants with sign}, respectively.
Apart from the Fukui invariants, motivic type invariants, 
{\em zeta functions}, are also known c.f. \cite{koikeparusinski}, \cite{fichou}.

\subsection{Example}\label{example1}  
$ \displaystyle {  f(x,y) = x(x^3-y^5), g(x,y) = x(x^3+y^5) }.$\\
By \cite{koikeparusinski2} $f$ and $g$ are not blow-analytically
equivalent by an orientation preserving blow-analytic homeomorphism.   

\vspace{2mm}
\epsfxsize=12cm
\epsfysize=2.2cm
$$\epsfbox{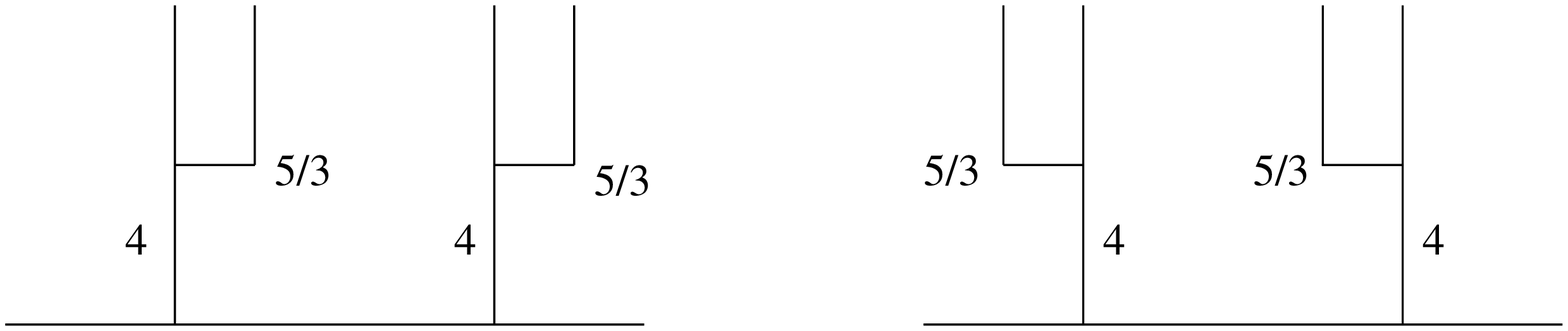}$$
\centerline{\hspace{10mm}$\R T(f)$\hspace{80mm} $\R T(g)$\hspace{10mm}}
\vspace{3mm}

We construct below an orientation preserving  bi-Lipschitz
homeomorphism 
$\sigma: (\R^2,0) \to (\R^2,0)$ such that $f=g\circ \sigma$. 
 The construction uses the fact that $f$ and $g$ are weighted homogeneous 
with weights $5$ and $3$.   Write 
\begin{eqnarray*} 
f(x,y) = x(x^3-y^5) = y^{20/3} P (\frac x {y^{5/3}}), \qquad P(z)  = z^4 -z, \\
g(x,y) = x(x^3+y^5)  =  y^{20/3} Q (\frac x {y^{5/3}}), \qquad Q(z) = z^4 +z.  
\end{eqnarray*}

\begin{prop}
There exists a unique increasing real analytic diffeomorphism $\varphi: \R \to \R$ 
satisfying $P=Q\circ \varphi$.  
Moreover, for this $\varphi$, $\varphi '$ and $\varphi - z \varphi '$ are globally bounded and 
$\varphi (z) /z\to 1$ as 
$z\to \infty$.  
\end{prop}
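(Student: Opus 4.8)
The plan is to exploit the elementary structure of the two quartics. First I would analyse $P(z)=z^4-z$ and $Q(w)=w^4+w$ as real functions. Each has a single critical point, a nondegenerate minimum: $P'(z)=4z^3-1$ vanishes only at $z_0=4^{-1/3}$ and $Q'(w)=4w^3+1$ only at $w_0=-4^{-1/3}=-z_0$, with $P''(z_0)=12z_0^2>0$ and $Q''(w_0)=12w_0^2>0$. A direct computation shows the two minima are equal, $P(z_0)=Q(w_0)=-\tfrac34\,4^{-1/3}=:c$ (indeed $P(z)=Q(-z)$). Thus $P$ restricts to an increasing bijection $P_R:[z_0,\infty)\to[c,\infty)$ and a decreasing bijection $P_L:(-\infty,z_0]\to[c,\infty)$ onto the common ray $[c,\infty)$, and likewise $Q$ gives $Q_R,Q_L$ about $w_0$.

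Next I would write $\varphi$ explicitly by selecting, on each side of $z_0$, the branch of $Q^{-1}$ forced by monotonicity:
\[
\varphi(z)=\begin{cases} Q_L^{-1}(P(z)), & z\le z_0,\\ Q_R^{-1}(P(z)), & z\ge z_0.\end{cases}
\]
Both formulas give $\varphi(z_0)=w_0$, so $\varphi$ is continuous, and on each piece it is a composition of monotone functions that one checks is strictly increasing (for $z>z_0$, $P$ increases and $Q_R^{-1}$ increases; for $z<z_0$, $P$ decreases and $Q_L^{-1}$ decreases). Hence $\varphi:\R\to\R$ is a strictly increasing continuous bijection with $Q\circ\varphi=P$. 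Uniqueness is immediate from monotonicity: any increasing solution $\psi$ must send $z_0$ to the unique minimiser $w_0$ of $Q$, and then $\psi(z)>w_0$ for $z>z_0$ forces the right branch (and symmetrically the left), so $\psi=\varphi$. Away from $z_0$, analyticity of $\varphi$ and of $\varphi^{-1}$ follows from the inverse function theorem, since there $P'\ne 0$ and $Q'(\varphi)\ne 0$ with matching signs, whence $\varphi'=P'(z)/Q'(\varphi(z))>0$.

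The one delicate point — the main obstacle — is analyticity at the fold $z=z_0$, where $P'$ and $Q'$ vanish simultaneously and the naive formula inverts a function with zero derivative. Here I would use the one–variable Morse lemma (analytic square root): since $P(z)-c$ is analytic, vanishes to order exactly two at $z_0$ and is positive nearby, write $P(z)-c=(z-z_0)^2h(z)$ with $h$ analytic and $h(z_0)=\tfrac12 P''(z_0)>0$, so that $u(z):=(z-z_0)\sqrt{h(z)}$ is a local analytic diffeomorphism at $z_0$ with $u^2=P-c$; define $v$ analogously from $Q$, writing $Q(w)-c=(w-w_0)^2k(w)$, $k(w_0)=\tfrac12 Q''(w_0)$. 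The equation $Q(\varphi)-c=P(z)-c$ becomes $v(\varphi)^2=u(z)^2$, and the increasing branch through $(z_0,w_0)$ is $v(\varphi)=u(z)$, i.e.\ $\varphi=v^{-1}\circ u$, analytic near $z_0$. Because $P''(z_0)=Q''(w_0)$ one gets $\varphi'(z_0)=\sqrt{h(z_0)}/\sqrt{k(w_0)}=1$, and a sign check shows this local solution coincides with the piecewise one, so $\varphi$ is analytic on all of $\R$ and is a real analytic diffeomorphism.

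Finally, for the asymptotic and boundedness statements I would extract the expansion of $\varphi$ at infinity from $\varphi^4+\varphi=z^4-z$. On the right branch $\varphi\to+\infty$ and $\varphi^4=z^4-z-\varphi\sim z^4$ give $\varphi/z\to 1$; pushing one further order yields $\varphi(z)=z-\tfrac12 z^{-2}+o(z^{-2})$. From $\varphi'=(4z^3-1)/(4\varphi^3+1)$ this gives $\varphi'\to 1$, so $\varphi'$ has finite limits at $\pm\infty$; and substituting the expansion into $\varphi-z\varphi'$ gives $\varphi-z\varphi'=-\tfrac32 z^{-2}+o(z^{-2})\to 0$ at both ends (the analysis at $-\infty$ is identical, again with $\varphi\sim z$ on the left branch). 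Since $\varphi'$ and $\varphi-z\varphi'$ are continuous on $\R$ and have finite limits as $z\to\pm\infty$, they are globally bounded, completing the proof.
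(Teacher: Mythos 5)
Your proof is correct, and for the existence, uniqueness and analyticity claims it follows essentially the same route as the paper: both arguments build $\varphi$ piecewise from the monotone branches of $Q^{-1}\circ P$ on either side of the common critical point (using $P(z_0)=Q(-z_0)$), and both settle analyticity at the fold by the one-variable analytic Morse lemma --- the paper phrases this as ``$P$ near $z_0$ is analytically equivalent to a quadratic plus $P(z_0)$, and likewise $Q$ near $\tilde z_0$,'' which is exactly your factorisation $P-c=u^2$, $Q-c=v^2$, $\varphi=v^{-1}\circ u$. Where you genuinely diverge is the asymptotic/boundedness part. The paper inverts at infinity: it forms the reciprocal germs $p(w)=(P(w^{-1}))^{-1}$ and $q(w)=(Q(w^{-1}))^{-1}$, which are analytic of order $4$ at $w=0$, produces an analytic conjugating germ $\psi$ with $p=q\circ\psi$ and $\psi(w)=w+\cdots$, and recovers $\varphi(z)=(\psi(z^{-1}))^{-1}$; the boundedness of $\varphi'$ and $\varphi-z\varphi'$ and the limit $\varphi(z)/z\to 1$ then follow from the convergent expansion of $\psi$ at the origin. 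You instead read the asymptotics directly off the functional equation $\varphi^4+\varphi=z^4-z$ together with the exact identity $\varphi'=(4z^3-1)/(4\varphi^3+1)$. Both are valid; the paper's inversion trick is slicker and yields a full convergent expansion of $\varphi$ at infinity in one stroke, while your computation is more elementary and self-contained. Its one delicate point is that you must use the exact formula for $\varphi'$ (as you do) rather than differentiate the asymptotic expansion of $\varphi$, which would not be justified a priori; in fact your route can be streamlined by combining the two identities into the exact relation $\varphi-z\varphi'=-3(z+\varphi)/(4\varphi^3+1)$, which visibly tends to $0$ as $z\to\pm\infty$.
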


\begin{proof}
$P$ and $Q$ have unique critical points: $z_0 = \sqrt [3] {\frac 1 4}, P'(z_0) = 0 $, 
 $\tilde z_0 = -z_0, Q'(\tilde z_0) = 0 $.  
Therefore $\varphi: (-\infty, z_0] \to (-\infty, \tilde z_0]$, 
 defined as $Q\inv \circ P$, is continuous and analytic on  $(- \infty , z_0)$.  Similarly for 
 $\varphi: [z_0, \infty) \to [\tilde z_0, \infty)$.  Thus $\varphi : \R \to \R$ is well-defined and continuous.   
  In a neighbourhood of $z_0$, that is a non-degenerate critical
  point, $P$ is analytically equivalent to $- z^2 + P(z_0) $.  
Similarly $Q$ near $\tilde z_0$ is analytically equivalent to 
 $- z^2 + Q(\tilde z_0) $.   Finally, since $P(z_0)=Q(\tilde z_0)$,  
$P$ near  $z_0$ is analytically equivalent to $Q$ near 
 $\tilde z_0$.   
 
 Let $w= \frac 1 z$.  Consider real analytic function germs 
 \begin{eqnarray*} 
p(w) : = (P (w\inv  ))\inv : (\R,0)\to (\R,0), \quad p(w) = w^4  + \cdots , \\
q(w) : = (Q (w\inv  ))\inv : (\R,0)\to (\R,0), \quad q(w) = w^4  + \cdots . 
\end{eqnarray*}
Then $p=q\circ \psi$ with $\psi (w) = w + \cdots$.  Since $\varphi(z) = (\psi(z \inv ))\inv $, 
the last claim of proposition can be verified easily.  
\end{proof}

\begin{cor}
$\sigma :(\R^2,0) \to (\R^2,0)$, defined by 
\begin{equation*}
\sigma (x,y) = 
\begin{cases} (y^{5/3} \varphi  (\frac x {y^{5/3}}),y) & \text{if $y\ne 0$,}
\\
(x,0) &\text{if $y= 0$, }
\end{cases}
\end{equation*}
is bi-Lipschitz and $f=g\circ \sigma$.  
\end{cor}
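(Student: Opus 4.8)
The plan is to treat the two assertions separately, the identity $f=g\circ\sigma$ being the elementary part and the bi-Lipschitz property carrying all the content. For $y\ne 0$, writing $w=x/y^{5/3}$ so that the first coordinate of $\sigma$ is $y^{5/3}\varphi(w)$, the weighted homogeneous presentation of $g$ gives
\[
g(\sigma(x,y)) = y^{20/3}\,Q\!\Big(\tfrac{y^{5/3}\varphi(w)}{y^{5/3}}\Big) = y^{20/3}\,Q(\varphi(w)) = y^{20/3}P(w) = f(x,y),
\]
where I use $Q\circ\varphi=P$ from the previous proposition; for $y=0$ both sides equal $x^4$. This settles $f=g\circ\sigma$.

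For the bi-Lipschitz property, I would first record the structural facts. The map $\sigma$ preserves the second coordinate, and on each line $\{y=c\}$ the map $x\mapsto\sigma_1(x,y)$ is an increasing bijection of $\R$ (the identity when $c=0$, and $x\mapsto y^{5/3}\varphi(x/y^{5/3})$ otherwise, since $\varphi$ is an increasing diffeomorphism). Hence $\sigma$ is a bijection whose inverse is of exactly the same form with $\varphi$ replaced by $\varphi\inv$. The strategy is then to bound $D\sigma$ on the open set $\{y\ne 0\}$ and to upgrade this to a Lipschitz estimate for the germ at the origin.

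Computing the partial derivatives off the line $\{y=0\}$, with $w=x/y^{5/3}$, I expect
\[
\frac{\partial\sigma_1}{\partial x}=\varphi'(w),\qquad
\frac{\partial\sigma_1}{\partial y}=\tfrac{5}{3}\,y^{2/3}\bigl(\varphi(w)-w\,\varphi'(w)\bigr),
\]
while $\sigma_2=y$ has constant differential. This is the crucial observation: the two quantities that arise are precisely $\varphi'$ and $\varphi-z\varphi'$, which the preceding proposition asserts are globally bounded; since $y^{2/3}$ stays bounded on a neighbourhood of the origin, $\|D\sigma\|$ is bounded there. In other words, the genuinely analytic estimate has already been isolated into that proposition, so no further work on $\varphi$ itself is needed, and the same proposition applied with the roles of $P$ and $Q$ exchanged gives the identical bounds for $\varphi\inv$, hence for $D(\sigma\inv)$.

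It remains to pass from a bounded differential on $\{y\ne 0\}$ to a global Lipschitz bound, and \emph{this gluing across the singular line $\{y=0\}$ is where the only real care is required}, since $\sigma$ fails to be differentiable there. First I would verify continuity across $\{y=0\}$: from $\varphi(z)/z\to 1$ one deduces that $\varphi(z)-z$ is globally bounded (integrate $(\varphi/z)'=-(\varphi-z\varphi')/z^2$, whose right-hand side is $O(z^{-2})$), whence $\sigma_1(x,y)=x+y^{5/3}\bigl(\varphi(w)-w\bigr)=x+O(|y|^{5/3})$ and so $\sigma(x,y)\to(x_0,0)$ as $(x,y)\to(x_0,0)$. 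Then, each closed half-plane $\{y\ge 0\}$ and $\{y\le 0\}$ being convex with $\sigma$ continuous on it and $C^1$ satisfying $\|D\sigma\|\le L$ in its interior, $\sigma$ is $L$-Lipschitz on each. For arbitrary $p,q$ lying in opposite half-planes the segment $[p,q]$ meets $\{y=0\}$ at a point $r$, and the triangle inequality together with $|p-r|+|r-q|=|p-q|$ yields the $L$-Lipschitz bound globally. The identical argument, applied to $\sigma\inv$ using the bounds for $\varphi\inv$, shows $\sigma\inv$ is Lipschitz as well, so $\sigma$ is bi-Lipschitz. (One sees along the way that $\sigma$ preserves orientation, since $\partial\sigma_1/\partial x=\varphi'>0$ and $\sigma_2=y$.)
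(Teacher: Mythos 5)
Your proof is correct and follows essentially the same route as the paper: the paper's entire argument consists of the computation $\partial \sigma /\partial x = (\varphi'( z), 0)$, $\partial \sigma /\partial y = (\tfrac{5}{3} y ^{2/3} (\varphi (z) - z \varphi'(z)), 1)$ together with the global bounds on $\varphi'$ and $\varphi - z\varphi'$ from the preceding proposition. The extra steps you supply --- the verification of $f=g\circ\sigma$, the continuity and Lipschitz gluing across $\{y=0\}$, and the treatment of $\sigma^{-1}$ via $\varphi^{-1}$ --- are precisely the routine details the paper leaves implicit.
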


\begin{proof}
We only check that $\sigma$ is Lipschitz. 
 This follows from the fact that the partial derivatives of 
$\sigma$ are bounded 
\begin{equation*} 
\partial \sigma /\partial x = (\varphi'( z), 0) , \quad 
\partial \sigma /\partial y = (5/ 3 y ^{2/3} (\varphi (z) - z \varphi'(z)), 1) . 
\end{equation*}
where $z=  \frac x {y^{5/3}}$.  
\end{proof}

\subsection{Example}\label{example2}  
$ \displaystyle {  f(x,y) = x(x^3- y^5)(x^3+y^5) , g(x,y) =  x(x^3-ay^5) (x^3-by^5) },$\\
where $0<a<b$ are constants.  The real trees of $f$ and $g$ are not
equivalent, see below, hence by 
 \cite{koikeparusinski2}, $f$ and $g$ are not blow-analytically
 equivalent.

\vspace{3mm}
\epsfxsize=10cm
\epsfysize=3cm
$$\epsfbox{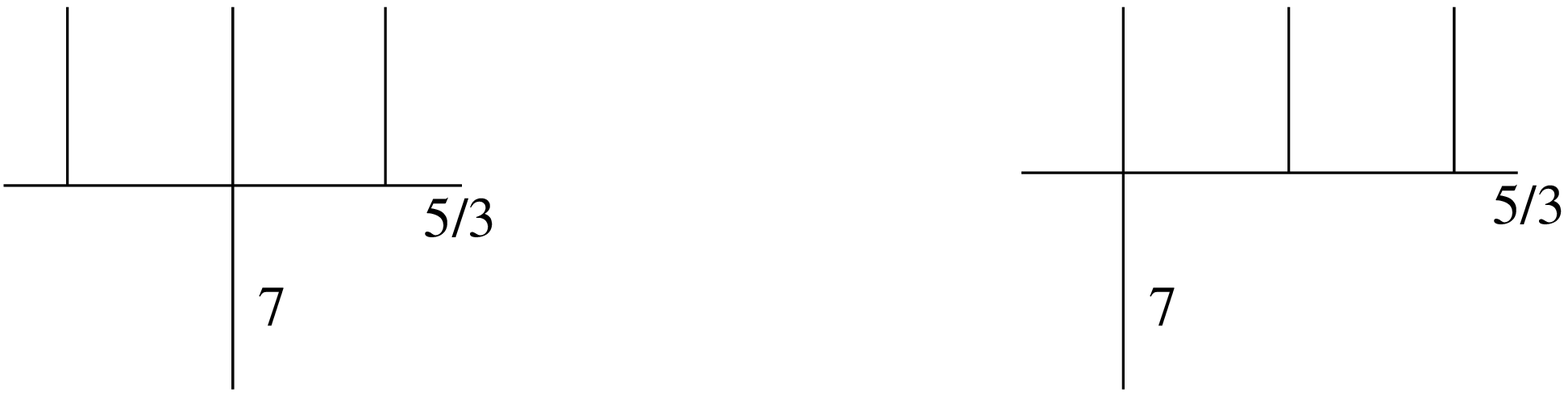}$$
\centerline{\hspace{10mm}$\R T_{(0,1)}(f)$\hspace{80mm} $\R T_{(0,1)}(g)$\hspace{10mm}}
\vspace{2mm}

Note that the Fukui invariants and the zeta functions of $f$ and $g$ coincide
(cf. Example 1.4 in \cite{koikeparusinski2}).
We show below that for a choice of $a$ and $b$, $f$ and $g$ are 
bi-Lipschitz equivalent.   Write 
\begin{eqnarray*} 
f(x,y) = y^{35/3} P (\frac x {y^{5/3}}), \qquad P(z) = z(z^3 -1)(z^3+1), \\
g(x,y) =   y^{35/3} Q (\frac x {y^{5/3}}), \qquad Q (z)  = z(z^3 -a)(z^3-b) .  
\end{eqnarray*}

The polynomial $P$ has two non-degenerate critical points  $-1< z_1 <0 $, $ z_2
 = -  z_1$ and $P( z_1)>0, P(z_2)= - P( z_1)<0$.  
 The polynomial $Q$ has also  two non-degenerate critical points  
$0< \tilde z_1 < \sqrt[3] a < \tilde z_2 <  \sqrt[3] b$ and $Q( \tilde z_1)>0, 
Q(\tilde z_2) < 0$.  Indeed, the discriminant of 
  $Q'(z) = 7z^6 -4(a+b) z^3 +ab$ with respect to $z^3$ equals $\Delta
  = 4(4a^2 +4b^2 +ab)>0$.  
This also shows that these critical points 
  $\tilde z_1 (a,b), \tilde z_2 (a,b)$ depend smoothly on $a,b$.  
  
\begin{lem}\label{ab}
There exist $a,b$, $0<a<b$, such that $Q(\tilde z_1(a,b)) = P(z_1)$,  $Q(\tilde z_2(a,b)) =P(z_2)$.  
\end{lem}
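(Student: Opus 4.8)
The plan is to reduce the two scalar equations to a single one-parameter problem by exploiting the weighted-homogeneous scaling of $Q$, and then to solve that problem by the intermediate value theorem. First I would record that $P(z)=z(z^3-1)(z^3+1)$ is odd, so its two critical values are opposite: $P(z_2)=-P(z_1)$. Writing $c:=P(z_1)>0$, the two conditions to be met become $Q(\tilde z_1)=c$ and $Q(\tilde z_2)=-c$; in particular the target critical values of $Q$ are required to be opposite as well.

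Next I would isolate the scaling. Substituting $z=a^{1/3}w$ in $Q_{a,as}(z):=z(z^3-a)(z^3-as)$ gives the identity
\begin{equation*}
Q_{a,as}(a^{1/3}w)=a^{7/3}R_s(w),\qquad R_s(w):=w(w^3-1)(w^3-s).
\end{equation*}
Hence the critical points of $Q_{a,as}$ are $a^{1/3}$ times those of $R_s$, and the two critical values of $Q_{a,as}$ are $a^{7/3}$ times the corresponding critical values $u(s):=R_s(\rho_1(s))>0$ and $v(s):=R_s(\rho_2(s))<0$ of $R_s$, where $0<\rho_1(s)<\rho_2(s)$ are the positive critical points. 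Since the ratio $u(s)/v(s)$ is scale invariant, matching opposite values of $Q$ amounts to finding a single $s>1$ with $u(s)+v(s)=0$; once such an $s_0$ is found, setting $a:=(c/u(s_0))^{3/7}>0$ and $b:=s_0a>a$ produces $Q(\tilde z_1)=a^{7/3}u(s_0)=c$ and $Q(\tilde z_2)=a^{7/3}v(s_0)=-c$, which is exactly $Q(\tilde z_1)=P(z_1)$, $Q(\tilde z_2)=P(z_2)$.

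It then remains to locate a zero of the continuous function $\Phi(s):=u(s)+v(s)$ on $(1,\infty)$. Continuity follows from the smooth dependence of the critical points on $s$, already guaranteed here because the discriminant $\Delta=4(4+s+4s^2)$ stays positive for all $s\ge 1$. At the endpoint $s=1$ the larger critical point $\rho_2$ falls on the double root $w^3=1$ of $R_1(w)=w(w^3-1)^2$, so $v(1)=0$ while $u(1)=R_1(\rho_1(1))>0$; thus $\Phi(1)>0$. For the other end I would carry out the asymptotics as $s\to\infty$: the smaller critical exponent satisfies $\rho_1(s)^3\to 1/4$, giving $u(s)\sim C_1 s$, whereas the larger one satisfies $\rho_2(s)^3\sim 4s/7$, giving $v(s)\sim -C_2 s^{7/3}$ for positive constants $C_1,C_2$. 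Since $7/3>1$, the negative term dominates and $\Phi(s)\to-\infty$. By the intermediate value theorem there is $s_0\in(1,\infty)$ with $\Phi(s_0)=0$, which completes the argument.

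The main obstacle is this last sign-change analysis, and specifically the $s\to\infty$ asymptotics: one must track the leading order of both critical values simultaneously and check that $|v(s)|$ outgrows $u(s)$. The $s\to 1$ end is comparatively painless, since there one critical value simply vanishes, so the whole difficulty is concentrated in showing that $\Phi$ eventually turns negative.
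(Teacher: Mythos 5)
Your proof is correct and takes essentially the same route as the paper's: both exploit the scaling identity $Q_{a,b}(\alpha z)=\alpha^{7}Q_{a/\alpha^{3},\,b/\alpha^{3}}(z)$ to reduce the problem (using that $P$ is odd, so its critical values are opposite) to making the two critical values of a one-parameter family sum to zero, which is then achieved by the intermediate value theorem in the ratio $s=b/a$. The paper merely performs the two steps in the opposite order---IVT in $a$ with $b$ fixed, where the endpoint $a\to 0$ gives a finite negative limit instead of your $-\infty$ asymptotics, followed by the rescaling---so the difference is purely cosmetic.
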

  
\begin{proof}
Fix  $b>0$.   If $a\to 0$ then $Q(\tilde z_1) \to 0$ and  $Q(\tilde z_2) \to const <0$.  
If $a\to b$ then $Q(\tilde z_1) \to const >0$ and  $Q(\tilde z_2) \to 0$.  Therefore there is an $a(b)$ such that 
$Q(\tilde z_1(a(b),b)) = - Q(\tilde z_2(a(b),b))$.   

 Write $Q_{a,b}$ instead of $Q$ to emphasise that $Q$ depends on $a$ and $b$.  
If $\alpha>0$ then $Q_{a,b}(\alpha z) = \alpha^7
Q_{a/\alpha^3,b/\alpha^3}(z)$.  Thus, there is $\alpha >0$ such that  the critical values of 
$ Q_{a(b)/\alpha^3,b/\alpha^3}$ are precisely $P( z_1), P(z_2)$.  This shows the lemma.  
\end{proof}   

Then, for $a$ and $b$ satisfying lemma \ref{ab},  the construction of bi-Lipschitz homeomorphism $\sigma$ such 
that $f=g\circ \sigma$ is similar to that of example \ref{example1}.


\medskip
\subsection{Example}\label{example3}
\begin{eqnarray}\label{lastexample}
& & f(x,y) =  x(x^3 - y^5)((x^3 - y^5)^3 - y^{17}) \\ 
\notag
& &  g(x,y) = x(x^3 + a y^5)(x^3 - y^7)(x^6 + by^{10}) , 
\end{eqnarray}
where $a>0, b>0$ are real constants.      
As we show below, for a choice of $a$ and $b$, $f$ and $g$ are 
bi-Lipschitz equivalent.  They have different real tree models, see
below, so they are not blow-analytically equivalent.  
Moreover, in contrast to the previous two examples, $f$ and $g$ have different Fukui invariants.  

\vspace{3mm}
\epsfxsize=10cm
\epsfysize=3cm
$$\epsfbox{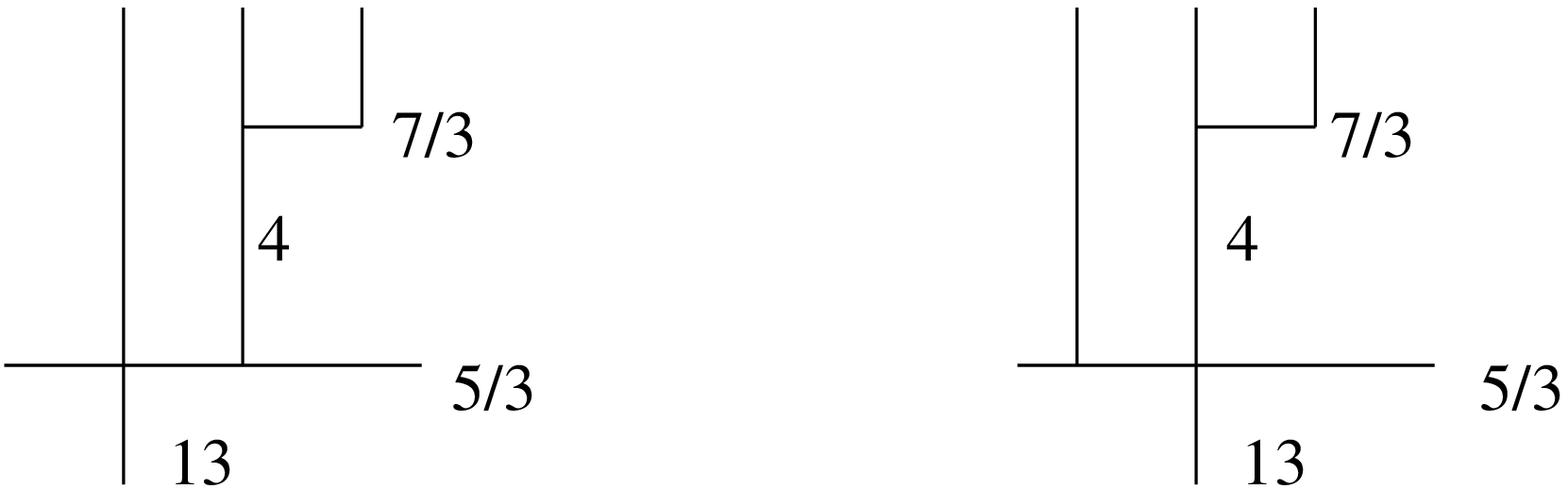}$$
\centerline{\hspace{10mm}$\R T_{(0,1)}(f)$\hspace{80mm} $\R T_{(0,1)}(g)$\hspace{10mm}}
\vspace{2mm}

\begin{prop}\label{differentinv}
Let $f, \ g : (\R^2,0) \to (\R,0)$ be polynomial functions  defined by
\eqref{lastexample}.   
Then 
$$
A(f) = \{ 13, 22, 23, 24, \cdots \} \cup \{ \infty \} , \ \ \
A(g) = \{ 13, 23, 25, 26, \cdots \} \cup \{ \infty \} .
$$
Thus $f$ and $g$ are not blow-analytically equivalent.
\end{prop}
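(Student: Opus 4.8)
The plan is to compute the Fukui invariants $A(f)$ and $A(g)$ directly from the definition and then to observe that they differ; since $A(\cdot)$ is a blow-analytic invariant (Fukui's theorem recalled above), this yields the non-equivalence. First I would cut down the arcs to be considered. An arc $\gamma$ with $y\equiv 0$ gives $f\circ\gamma=x(t)^{13}$, of order a positive multiple of $13$; an arc with $x\equiv 0$ gives $f\circ\gamma\equiv 0$, of order $\infty$; and the reparametrisation $t\mapsto t^{k}$ multiplies every order by $k$, so each $A(\cdot)$ is closed under multiplication by positive integers. It therefore suffices to treat arcs $y=t^{n}$, $x=ct^{m}+\cdots$ with $c\neq 0$, $m,n\ge 1$, and for each to evaluate the integer $\ord_t(f\circ\gamma)=m+\ord_t P+\ord_t Q$, where $P=x^{3}-y^{5}$ and $Q=P^{3}-y^{17}$.

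For $f$ I would organise the analysis by the slope $m/n$, the only critical value being $m/n=5/3$, where $x^{3}$ and $y^{5}$ balance. If $3m<5n$ then $\ord_t P=3m$, $\ord_t Q=9m$ and $\ord_t(f\circ\gamma)=13m$. If $3m>5n$ then $\ord_t P=5n$, $\ord_t Q=15n$ and $\ord_t(f\circ\gamma)=m+20n$, which for $n=1$, $m\ge 2$ realises every integer $\ge 22$. On the boundary $3m=5n$ one has $n=3s$, $m=5s$; if the leading coefficient of $x$ does not match the root $x=y^{5/3}$ then $\ord_t(f\circ\gamma)=65s$, while if it does one is approaching a root of $P$ or, after one further coefficient match, the nested root of $Q$, along which $\ord_t(xP)=22s$ and hence $\ord_t(f\circ\gamma)\ge 22s+1$. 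In all cases the order lies in $\{13\}\cup\{k:k\ge 22\}\cup\{\infty\}$ and every such value occurs, which is the asserted $A(f)$.

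For $g=x\,R\,S\,U$ with $R=x^{3}+ay^{5}$, $S=x^{3}-y^{7}$, $U=x^{6}+by^{10}$ the same scheme applies, the new features being the two critical slopes $m/n=5/3$ and $m/n=7/3$ and the fact that $U$ has no real zero, so that $\ord_t U=\min(6m,10n)$ with no cancellation ever possible. For $n=1$ the arcs $x=ct^{m}$ give order $13$ at $m=1$, order $4m+15=23$ at $m=2$ (slope strictly between $5/3$ and $7/3$), and order $m+22\ge 25$ for $m\ge 3$; the two boundary slopes again contribute only large orders ($\ge 65$). This gives $A(g)=\{13,23\}\cup\{k:k\ge 25\}\cup\{\infty\}$, so that $24\in A(f)\setminus A(g)$ and $f,g$ are not blow-analytically equivalent.

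The principal obstacle is the upper bound: ruling out every order in the forbidden gaps ($14$ to $21$ for $f$; $14$ to $22$ and also $24$ for $g$). The order can exceed the generic value $13$ only when $\gamma$ becomes tangent to, and then settles into, a real root of one of the factors, and one must check that each such tangency pushes the order into the high allowed range rather than into a gap. The reason is structural: the critical slope $5/3$ (and $7/3$ for $g$) is met by an integer-exponent arc only when $n$ is a multiple of $3$, which rescales the resulting order by $s=n/3$ into the range $\ge 65$; the arcs with $n=1$, which alone reach values near the bottom, simply step over the critical slope, yielding $13$ then $22$ for $f$ and $13,23,25,\dots$ for $g$. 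Making this "stepping over the slope" precise for an arbitrary arc — in particular controlling all higher-order terms when approaching the deepest, nested root of $f$ and excluding hidden cancellations — requires the careful bookkeeping supplied by the order function and the Newton polygon relative to a branch of Section~\ref{invariantbi-L}.
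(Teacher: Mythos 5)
Your proposal is correct and takes essentially the same approach as the paper: both compute $A(f)$ and $A(g)$ directly from the definition by estimating the order of each factor of $f$ and $g$ along an arbitrary arc and then conclude by Fukui's theorem, the only difference being organisational (the paper splits cases by the vanishing of the initial coefficients $c_1,c_2,d_1$ of the arc, while you split by the slope $m/n$ relative to the critical slopes $5/3$ and $7/3$). If anything, your handling of the tangential cases $3m=5n$ is more explicit than the paper's, which simply asserts the lower bounds $\ge 22$ and $\ge 25$; the final appeal to the Newton polygon machinery of Section~\ref{invariantbi-L} is an unnecessary hedge, since your case analysis already excludes the hidden cancellations.
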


\begin{proof}
Let us express an analytic arc at $(0,0) \in \R^2$,
$\lambda (t) = (\lambda_1(t),\lambda_2(t))$, as follows:
$$
\lambda_1(t) = c_1t + c_2t^2 + \cdots , \ \ \
\lambda_2(t) = d_1t + d_2t^2 + \cdots .
$$
To compute $A(f)$, we consider $f(\lambda (t))$;
$$
f(\lambda (t)) = 
(c_1t + c_2t^2 + \cdots )(c_1^3t^3 + \cdots - d_1^5t^5 - \cdots )
((c_1^3t^3 + \cdots - d_1^5t^5 - \cdots )^3 - d_1^{17}t^{17} - \cdots ).
$$
In case $c_1 \ne 0$, we have $\ord (f \circ \lambda ) = 13$.
In case $c_1 = 0$, we have $\ord (f \circ \lambda ) \ge 22$.
For any $s = 2, 3, \cdots$, $20 + s$ is attained by the arc
$\lambda (t) = (t^s,t)$.
Therefore we have
$$
A(f) = \{ 13, 22, 23, 24, \cdots \} \cup \{ \infty \} .
$$

We next compute $A(g)$. Then

\vspace{3mm}

\centerline{$g(\lambda (t)) = (c_1t + c_2t^2 + \cdots )
(c_1^3t^3 + \cdots + a d_1^5t^5 + \cdots )(c_1t^3 + \cdots - d_1^7t^7 - \cdots )$}

\vspace{3mm}

\centerline{$\times
(c_1^6t^6 + \cdots + bd_1^{10}t^{10} + \cdots ).$}

\noindent In case $c_1 \ne 0$, we have $\ord (f \circ \lambda ) = 13$.
In case $c_1 = 0$, $c_2 \ne 0$ and $d_1 \ne 0$,
we have $\ord (f \circ \lambda ) = 23$.
In case $c_1 = c_2 = 0$ or $c_1 = d_1 = 0$,
we have $\ord (g \circ \lambda ) \ge 25$.
For any $s = 3, 4, \cdots$, $22 + s$ is attained by the arc
$\lambda (t) = (t^s,t)$.
Therefore we have
$$
A(g) = \{ 13, 23, 25, 26, \cdots \} \cup \{ \infty \} .
$$
\end{proof}

Next we compute the polar roots of $f$ and $g$.  The one variable polynomial associated to the 
leading weighted homogeneous part of $f$ with respect to the weigths
$5$ and $3$ equals  $P_1 (z)= z(z^3-1)^4$.  
Besides a multiple root $z=1$, it 
has a unique non-denegenerate critical point $a_1$, $0<a_1<1$, which gives rise to a polar curve 
$$
\gamma_1 : x=\lambda_1(y) = a_1 y^{5/3} + \cdots ,  \qquad f(\lambda_1(y),y) = 
A_1 y^{21\frac 23} + O( y^{23\frac 23} )
$$
where $A_1= P_1(a_1)$. The Newton polygon of $f$ relative to 
$\gamma : x=y^{5/3}$ has two edges: one of slope $-5/3$ and one of slope $-7/3$.  
The one variable polynomial associated to the latter is 
$P_2 (z) := P_{f,\gamma,7/3}(z) = 3^4z^4 -3z$.  
The unique 
non-degenerate critical point $a_2$ of $P_2$ gives rise to a polar curve 
$$
\gamma_2 : x=\lambda_2(y) =  y^{5/3} + a_2  y^{7/3} +\cdots ,  \qquad f(\lambda_1(y),y) = A_2 y^{24\frac 13} + 
O( y^{25} )
$$
where $A_2= P_2(a_2)$.  

The one variable polynomial associated to the 
leading weighted homogeneous part of $g$ equals  $Q_1(z) = z^4 (z^3+a) (z^6 +b)$.  
If  $ 10^2 a^2 - (7\cdot 39)b <0 $ then  
 $Q_1 '(z) = 13 z^{12} + 10 a z^{9} + 7 b z^6 + 4ab  z^3$ has a single simple non-zero real root   
  Indeed, let $S(t) = 13 t^{3} + 10 a t^{2} + 7 b t + 4ab $.  
 Then $S' (t) = 39 t^{2} + 20 a t^{} + 7 b $ and the discriminant of $S'(t)$ is $\Delta /4 = 10^2 a^2 - (7\cdot 39)b$.  
 Therefore, if we suppose that  
\begin{equation}\label{abnew}
a>0, b>0, 10^2 a^2 - (7\cdot 39)b <0, 
\end{equation}  
then $S(t)$ has a single simple root, that shows our claim on $Q_1'$.  
Let $\tilde a_1$ denote this 
critical point of  $Q_1$, $\tilde a_1<0$.  Then there exists a polar curve of $g$   
\begin{eqnarray*}
 \tilde \gamma_1 : x= \tilde \lambda_1(y) =  \tilde a_1 y^{5/3} + \cdots ,  \qquad g(\tilde \lambda_1(y),y) = 
\tilde A_1 y^{21\frac 23} + O( y^{23\frac 23} ). 
\end{eqnarray*}
Finally, the one variable polynomial associated to the face of the Newton polygon of $g$ of slope $-7/3$ is 
$Q_2 (z) = z^4 -z$.  It has a single non-degenarate critical point  $\tilde a_2$ that gives a polar curve 
\begin{eqnarray*}
\tilde \gamma_2 : x= \tilde \lambda_2(y) =  \tilde a_2  y^{7/3} +\cdots ,  \qquad g(\tilde \lambda_1(y),y) = 
\tilde A_2 y^{24\frac 13} + O( y^{26\frac 1 3} )
\end{eqnarray*}
where $\tilde A_2=Q_2(\tilde a_2)$.   One checks easily that $\tilde A_2=A_2$.  

\begin{lem}\label{abA}
There are constants $a, b$ satisfying \eqref{abnew} for which $\tilde A_1=A_1$.  
\end{lem}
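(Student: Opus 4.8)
The plan is to follow the two–ingredient scheme already used in Lemma \ref{ab}: first show that the critical value $\tilde A_1$ has the correct sign, and then use a quasi–homogeneous rescaling of the associated polynomial to slide $\tilde A_1$ onto the prescribed value $A_1$. I write $Q_1 = Q_1^{(a,b)}(z) = z^4(z^3+a)(z^6+b)$ to display the dependence on the parameters, and recall that $A_1 = P_1(a_1)$ with $P_1(z)=z(z^3-1)^4$ and $0<a_1<1$ is a fixed constant attached to $f$, while under \eqref{abnew} the polynomial $Q_1$ has a unique simple nonzero critical point $\tilde a_1 = t_0^{1/3}$, where $t_0$ is the unique real root of $S(t)=13t^3+10at^2+7bt+4ab$.

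The first and most delicate step is to pin down the signs of both numbers. Since $a_1\in(0,1)$ we have $A_1=a_1(a_1^3-1)^4>0$. For the other value I would write $\tilde A_1 = Q_1(\tilde a_1) = \tilde a_1^{\,4}(t_0+a)(t_0^2+b)$; the factors $\tilde a_1^{\,4}$ and $t_0^2+b$ are manifestly positive, so the sign of $\tilde A_1$ is exactly the sign of $t_0+a$. The crux is therefore to locate $t_0$: evaluating the cubic at the two relevant points gives $S(0)=4ab>0$ and $S(-a)=-3a(a^2+b)<0$, so the unique real root satisfies $t_0\in(-a,0)$. Consequently $t_0+a>0$ and $\tilde A_1>0$ for \emph{every} $(a,b)$ obeying \eqref{abnew}. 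I expect this sign determination to be the main obstacle, since the rescaling below can reach the positive target $A_1$ only once $\tilde A_1$ is known to be positive as well.

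Next I would record the rescaling identity. A direct substitution shows that for every $\alpha>0$ one has $Q_1^{(a,b)}(\alpha z)=\alpha^{13}\,Q_1^{(a\alpha^{-3},\,b\alpha^{-6})}(z)$, so $\alpha^{-1}\tilde a_1$ is the corresponding simple critical point of $Q_1^{(a\alpha^{-3},b\alpha^{-6})}$ and the critical values are related by $\tilde A_1(a\alpha^{-3},b\alpha^{-6})=\alpha^{-13}\,\tilde A_1(a,b)$. Moreover the admissibility region is invariant under this rescaling, because $100(a\alpha^{-3})^2-273(b\alpha^{-6})=\alpha^{-6}\bigl(100a^2-273b\bigr)$ keeps its sign and $a\alpha^{-3},b\alpha^{-6}>0$; hence the whole one–parameter orbit stays inside the set where the critical–point analysis of the previous paragraph is valid.

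Finally I would conclude by sweeping. Fixing any $(a_0,b_0)$ satisfying \eqref{abnew}, the sign step gives $v_0:=\tilde A_1(a_0,b_0)>0$, and then $\tilde A_1(a_0\alpha^{-3},b_0\alpha^{-6})=\alpha^{-13}v_0$ ranges over all of $(0,\infty)$ as $\alpha$ runs over $(0,\infty)$. Taking $\alpha=(v_0/A_1)^{1/13}$ and setting $a=a_0\alpha^{-3}$, $b=b_0\alpha^{-6}$ yields admissible constants with $\tilde A_1=A_1$, which is exactly the assertion of the lemma. Because the remaining critical value is already matched by the identity $\tilde A_2=A_2$ noted above, this single rescaling parameter suffices, precisely as in the proof of Lemma \ref{ab}.
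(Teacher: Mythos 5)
Your proof is correct, but it follows a different route from the paper's. The paper proves Lemma \ref{abA} by an intermediate-value argument in the $(a,b)$-parameter region: it notes $-\sqrt[3]{a}<\tilde a_1(a,b)<0$ (so the critical value is positive), observes that for $b$ fixed $Q_1(\tilde a_1(a,b))\to 0$ as $a\to 0$, while for $a$ fixed $Q_1(\tilde a_1(a,b))\ge Q_1(-\frac{1}{2}\sqrt[3]{a})\to\infty$ as $b\to\infty$, and concludes by continuity of the critical value (which rests on the earlier observation that $\tilde a_1(a,b)$ depends smoothly on the parameters) that some admissible $(a,b)$ achieves the value $A_1$. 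You instead use only the quasi-homogeneous rescaling $Q_1^{(a,b)}(\alpha z)=\alpha^{13}Q_1^{(a\alpha^{-3},b\alpha^{-6})}(z)$ — the trick the paper deploys in Lemma \ref{ab} but not here — together with a sign determination: your computation $S(0)=4ab>0$, $S(-a)=-3a(a^2+b)<0$ locates $t_0=\tilde a_1^3\in(-a,0)$ and hence gives $\tilde A_1>0$ for every admissible pair, after which a single choice $\alpha=(v_0/A_1)^{1/13}$ lands exactly on $A_1$; you also correctly check that the constraint \eqref{abnew} is invariant under the rescaling since $100(a\alpha^{-3})^2-273\,b\alpha^{-6}=\alpha^{-6}(100a^2-273b)$. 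Your argument is more constructive — it produces an explicit solution from any admissible seed pair and needs no continuity of $\tilde a_1(a,b)$ in the parameters or limit computations — and your sign step makes explicit (via the values of $S$) what the paper asserts without proof, namely that $\tilde a_1$ lies between the two roots of $Q_1$. The paper's argument, in exchange, is shorter to state given its earlier smooth-dependence remark and avoids verifying the scaling identity. Both are complete; note only that your closing remark about $\tilde A_2=A_2$ deserves the observation that $Q_2(z)=z^4-z$ does not involve $a,b$, so rescaling these parameters cannot disturb that identity.
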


\begin{proof}
Denote by  $\tilde a_1(a,b)$ the unique non-zero critical point of $Q_1$ thus emphasising that it depends 
on $a,b$.    Note that $\tilde a_1(a,b)$ is between the two roots of $Q_1$,  
$-\sqrt [3] a <\tilde a_1 (a,b) <0$.  
For $b$ fixed $ Q_1( \tilde a_1(a,b)) \to 0$ as $a\to 0$.  
Fix $a$ and let $b \to \infty$.  
Then $Q_1( - \half \sqrt [3] a ) \to \infty$ and hence $Q_1( \tilde a_1(a,b))  
\to \infty$.  
Thus there exist $a,b$ for which $Q_1(\tilde a_1(a,b)) = A_1$.  
\end{proof}


Next for $f$, and then for $g$, we introduce a new system of local coordinates 
$(\tilde x, \tilde y) = H(x,y)$ 
in which $f$ has particularly simple form near the polar curves.  
Firstly, for each polar curve $\gamma_i$, $i=1,2,$ separately, we
reparametrise $\lambda_i$ 
by replacing $y$ by an invertible fractional power series 
$\tilde y_i(y)$ 
so that 
\begin{eqnarray} \label{reparameter}
& &  f(\lambda_1 (y(\tilde y_1)), y(\tilde y_1)) = 
A_1 \tilde y_1^{21\frac 23} ,  \quad \tilde y_1 = y+ O(y^{3}), \\ 
& &  f(\lambda_2 (y(\tilde y_2)), y(\tilde y_2)) = 
A_2 \tilde y_2^{24\frac 13} ,  \quad \tilde y_2 = y+ O(y^{5/3}).
\end{eqnarray}

Denote $\xi =5/3$ for short.  
Let $\varphi_0, \varphi_1, \varphi_2$ be a ($C^\infty$ or $C^k, k\ge
2$, semialgebraic) partition of unity on $\R$ such that 
\begin{enumerate} [(i)]
\item 
$\supp \varphi_1$ is a small neighbourhood of $a_1$ and $\varphi_1
\equiv 1$ in a neighbourhood of $a_1$.
\item 
$\supp \varphi_2$ is a small neighbourhood of $1$ and $\varphi_2
\equiv 1$ in a neighbourhood of $1$.
\end{enumerate}
Then $\varphi_0 = 1 - \varphi_1 - \varphi_2$.  We set $\tilde y_0 (y) =y$ and define 
\begin{eqnarray*}
& & \Phi(x,y) = (x, \tilde y(x,y)) = (x, \sum_{i=0}^2 \tilde y_i(y)
\varphi_i (x/y^\xi) )  , \quad   \Phi(x,0) = (x,0).
\end{eqnarray*}

Let $\psi :\R \to \R$ be a ($C^\infty$ or $C^k, k\ge 2$, semialgebraic) diffeomorphism such that 
\begin{enumerate} [(i)]
\item 
$\psi (a_1)=0$ and $\psi (z) = z-a_1$ for $z$ near $a_1$. 
\item 
$\psi (1) = 1$ and $\psi (z) = z$ for $z$ near $1$.  
\item
$\psi (z) = z$ for $|z|$ large.
\end{enumerate}
We set 
\begin{eqnarray*}
& & \Psi_1(x,  y) = (\psi (x/  y^\xi)  y^\xi, y)  , \quad   \Psi_1(x,0) = (x,0).
\end{eqnarray*}

Let $\psi_0, \psi_1, \psi_2$ be a ($C^\infty$ or $C^k, k\ge 2$, semialgebraic) partition of unity on $\R$ 
such that 
\begin{enumerate} [(i)]
\item 
$\supp \psi_1$ is a small neighbourhood of $0$ and $\psi_1 \equiv 1$ in a neighbourhood of $0$.
\item 
$\supp \psi_2$ is a small neighbourhood of $1$ and $\psi_2 \equiv 1$ in a neighbourhood of $1$.
\end{enumerate}
Let $x=\delta_1 (y)$ be an equation of $\Psi_1\circ \Phi (\gamma_1)$ and 
let $x=\delta_2 (y) + y^\xi$ be an equation of $\Psi_1\circ \Phi (\gamma_2)$.  
Note that $\delta_i(y) = o(y^\xi)$, $i=1,2$.   
 We set $\delta_0 \equiv 0$ and define 
\begin{eqnarray*}
& & \Psi_2 (x,y) = ( \sum_{i=0}^2 (x- \delta_i(y)) \psi_i (x/y^\xi)
,y), \quad   \Psi_2(x,0) = (x,0).
\end{eqnarray*}

\begin{prop}\label{change}
$H = \Psi_2 \circ \Psi_1 \circ \Phi$ and 
$\tilde f (\tilde x,\tilde y) = f \circ H\inv (\tilde x,\tilde y)$  satisfy 
the following properties: 
\begin{enumerate}
\item
$H$ is a bi-Lipschitz local homeomorphism.  Moreover, $D^2 H= O(y^{-\xi})$.
\item
$\{ \partial \tilde f /\partial \tilde x = 0 \} = H( \{ \partial  f
/\partial  x = 0 \}) = H(\gamma_1)\cup H(\gamma _2)$
and $H(\gamma_1) = \{\tilde x =0\}$, $H(\gamma_2) = \{\tilde x =\tilde y^\xi \}$. 
\item 
In a horn neighbourhood of $\gamma_1$, $\gamma_2$ resp.,  with exponent $\xi$, $H$ is given by 
$$
H(x,y) = (x - \lambda_1 (y), \tilde y_1(y)) , \qquad H(x,y) = (x - \lambda_2 (y) + \tilde y_2^\xi (y), \tilde y_2(y))
$$
\item
For $C$ large and $|x| \ge C |y|^\xi$, $H(x,y) = (x,y)$.  
\end{enumerate}
\end{prop}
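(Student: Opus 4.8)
The plan is to handle the three maps $\Phi$, $\Psi_1$, $\Psi_2$ one at a time, proving that each is a bi-Lipschitz local homeomorphism with $D^2 = O(y^{-\xi})$, and then to extract properties (2)--(4) by composing them region by region; since a composition of bi-Lipschitz maps is bi-Lipschitz and $\tilde y_i(y)=y+o(y)$ makes all the $y$-coordinates comparable, the bound $D^2 H = O(y^{-\xi})$ will transfer to $H$ from the factors, giving (1). The structural feature I would exploit is that each factor alters only one coordinate and is built from $z = x/y^\xi$ together with the reparametrisations $\tilde y_i$, the diffeomorphism $\psi$, the shears $\delta_i$, and the partitions of unity. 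Using $\partial z/\partial x = y^{-\xi}$ and $\partial z/\partial y = -\xi z/y$, the first partials are all bounded: for $\Phi$ one writes $\tilde y = y + \sum_{i=1,2}(\tilde y_i(y)-y)\varphi_i(z)$, and the potentially singular factors $y^{-\xi}$, $y\inv$ are absorbed by the orders $\tilde y_1-y = O(y^3)$, $\tilde y_2-y = O(y^\xi)$ of \eqref{reparameter} with $\xi = \tfrac53$; for $\Psi_1$ one finds $\det D\Psi_1 = \psi'(z)\ge c>0$; and for $\Psi_2$ the identity $\sum\psi_i\equiv 1$ (hence $\sum\psi_i'\equiv 0$) together with $\delta_i(y)=o(y^\xi)$ gives $\partial_{\tilde x}=1+o(1)$. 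One further differentiation costs at most one extra factor $y^{-\xi}$, which yields $D^2 = O(y^{-\xi})$; since each Jacobian determinant is $1+o(1)$, each map is a local diffeomorphism with uniformly bounded inverse derivatives, hence bi-Lipschitz.

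For (3) and (4) I would simply compose the three maps in each region, using that the partition functions equal $1$ near the relevant curves. In a horn neighbourhood of $\gamma_1$ one has $\varphi_1\equiv 1$, so $\Phi(x,y)=(x,\tilde y_1(y))$; then $\psi(z)=z-a_1$ near $a_1$ gives $\Psi_1\circ\Phi(x,y)=(x-a_1\tilde y_1^\xi,\tilde y_1)$; and since $x=\delta_1(\tilde y_1)$ is the equation of $\Psi_1\circ\Phi(\gamma_1)$ with $\psi_1\equiv 1$ near $0$, the last map subtracts $\delta_1(\tilde y_1)$. As $a_1\tilde y_1^\xi+\delta_1(\tilde y_1)=\lambda_1(y)$ by the definition of $\delta_1$, this gives exactly $H(x,y)=(x-\lambda_1(y),\tilde y_1(y))$. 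The computation near $\gamma_2$ is analogous, using $\psi\equiv\id$ and $\psi_2\equiv 1$ near $1$, and produces $H(x,y)=(x-\lambda_2(y)+\tilde y_2^\xi(y),\tilde y_2(y))$. For $|x|\ge C|y|^\xi$ all of $\varphi_0$, $\psi$, $\psi_0$ reduce to the identity data ($\varphi_0\equiv 1$, $\psi(z)=z$, $\delta_0\equiv 0$), so $H$ is the identity, which is (4).

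Property (2) I would then read off from (3). In each horn neighbourhood $H$ has the triangular shape $(x,y)\mapsto (x-c(y),d(y))$, so the $x$-component of $H\inv$ is $\tilde x + c(d\inv(\tilde y))$ with $\partial x/\partial\tilde x = 1$ and $\partial y/\partial\tilde x = 0$; by the chain rule $\partial\tilde f/\partial\tilde x = (\partial f/\partial x)\circ H\inv$, so $H$ carries the $x$-polar set of $f$ onto the $\tilde x$-polar set of $\tilde f$ there. Since the polar curves of $f$ were already identified as $\gamma_1$ and $\gamma_2$, both lying in these horn neighbourhoods, and since $H$ is the identity (whence $\tilde f=f$) in the far region where $\partial f/\partial x$ does not vanish, one obtains $\{\partial\tilde f/\partial\tilde x=0\}=H(\gamma_1)\cup H(\gamma_2)$. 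Substituting $x=\lambda_1(y)$, resp. $x=\lambda_2(y)$, into the formulas of (3) gives $H(\gamma_1)=\{\tilde x=0\}$ and $H(\gamma_2)=\{\tilde x=\tilde y^\xi\}$.

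The hard part will be the uniform bi-Lipschitz estimate of (1) across the transition (gluing) regions, where the derivatives of the partition functions are nonzero and each differentiation of $\varphi_i(x/y^\xi)$ contributes a factor $y^{-\xi}$ or $y\inv$. Controlling these requires precisely the orders $\tilde y_1-y=O(y^3)$, $\tilde y_2-y=O(y^\xi)$ with $\xi=\tfrac53$, so that the terms $(\tilde y_i-y)\,\partial_x^k\varphi_i(z)$ and $(\tilde y_i-y)\,\partial_y^k\varphi_i(z)$ remain bounded for $k=1$ and are $O(y^{-\xi})$ for $k=2$; the analogous care for $\Psi_1$ and $\Psi_2$ relies on the global normalisations $\psi(z)=z$ and $\delta_i(y)=o(y^\xi)$, together with $\sum_i\psi_i'\equiv 0$, to make the boundary contributions vanish and the estimates close up.
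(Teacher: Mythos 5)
Your treatment of (1), (3) and (4) runs essentially along the paper's own lines: boundedness of the first partials of each factor $\Phi$, $\Psi_1$, $\Psi_2$ (with the orders of $\tilde y_i - y$ and $\delta_i$ absorbing the factors $y^{-\xi}$, $y^{-1}$ produced by differentiating the cutoffs), and (3), (4) obtained by composing the factors in the regions where the cutoff data are constant. The genuine gap is in property (2), and it sits exactly where the paper has to do real work. Your argument covers two regions: the horn neighbourhoods of $\gamma_1,\gamma_2$, where $H$ is triangular and hence $\partial\tilde f/\partial\tilde x = (\partial f/\partial x)\circ H\inv$, and the far region $|x|\ge C|y|^\xi$, where $H=\id$ and $\tilde f=f$. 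These two regions do not cover a punctured neighbourhood of the origin: in between lies the transition region (in the scaled variable $z=x/y^\xi$, the set where $z$ is bounded but outside small neighbourhoods of $a_1$ and $1$), where the cutoffs take values strictly between $0$ and $1$ and where $\psi$ is a nontrivial interpolation between $z\mapsto z-a_1$ and the identity. There $H$ is neither triangular nor the identity, and the chain rule reads $\partial\tilde f/\partial\tilde x = (\partial f/\partial x)(\partial x/\partial\tilde x) + (\partial f/\partial y)(\partial y/\partial\tilde x)$ with a second term that does not vanish. Knowing that the zero set $\gamma_1\cup\gamma_2$ of $\partial f/\partial x$ misses this region is not enough: a priori $\{\partial\tilde f/\partial\tilde x=0\}$ could acquire extra points there by cancellation between the two terms, and then (2) would be false.

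The paper closes this by a comparison of orders which your proposal never invokes for this purpose: on the transition region $z$ stays away from the critical points of the relevant one-variable polynomials, so $\partial f/\partial x\sim y^{20}$, while $\partial f/\partial y$ is of strictly higher order, hence $o(y^{20})$; moreover $\partial x/\partial\tilde x$ is bounded below by a positive constant and $\partial y/\partial\tilde x$ is bounded. Note that boundedness is all one can say about the cross term of $DH$, since $(\tilde y_2 - y)y^{-\xi}=O(1)$ is not small; it is only the gap in orders between $y^{20}$ and $o(y^{20})$ that prevents cancellation and gives $\partial\tilde f/\partial\tilde x\sim y^{20}\ne 0$ throughout the transition region. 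Without this estimate (2) does not follow from (3) and (4). A smaller remark on (1): ``local diffeomorphism with uniformly bounded inverse derivatives, hence bi-Lipschitz'' silently assumes injectivity on a neighbourhood of the origin; the paper handles this by noting $H\inv(0)=0$ and that $H$ is a covering over the punctured neighbourhood (alternatively, each of your factors is strictly monotone in the single coordinate it moves, so injectivity is elementary, but it should be said).
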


\begin{proof}
(3) and (4) are given by construction.

We show that the partial derivatives of $\Phi$, $\Psi_1$, and $\Psi_2$ are bounded.  
For $\Phi$ it is convenient to write 
$\Phi(x,y) = (x , y  +  \sum_{i} ( \tilde y_i - y)  \varphi_i (x/y^\xi) )$.  Then 
\begin{eqnarray*}
& & \partial \Phi/\partial x = (1, \sum_i (\tilde y_i - y) y^{-\xi}  \varphi_i')  \qquad \text { bounded},\\
& & \partial \Phi/\partial y = (0, 1- \sum_i \xi \frac x {y^\xi} (\tilde y_i - y) y^{-1} \varphi_i'  +  
\sum_{i} ( \tilde y'_i - 1)  \varphi_i ) = (0,1 +o(y)) , \\
& & \partial \Psi_1/\partial x = (\psi',  0) ,\\
& & \partial \Psi_1/\partial y = (\xi y^{\xi-1}(\psi - \frac x {y^\xi}
\psi'  )  , 1) = (o(y), 1).   
\end{eqnarray*}
For $\Psi_2$ it is convenient to write 
$\Psi_2 (x,y) = (x-  \sum_{i} \delta_i(y)  \psi_i (x/y^\xi),y)$
\begin{eqnarray*}
& & \partial \Psi_2/\partial x = (1 - \sum_i \delta_i(y) y^{-\xi} \psi',  0) = (1-o(y), 0) ,\\
& & \partial \Psi_2/\partial y = (- \sum_{i} \delta'_i(y)  \psi_i  +
\sum_{i} \xi \frac x {y^\xi}  y^{-1}  \delta_i(y)  
\psi' , 1) = (o(y), 1).   
\end{eqnarray*}
Thus $H$ is Lipschitz, $H\inv (0) = 0$, and $H$ is a covering over the complement of the origin.  
Hence it is invertible.  
The formulae for the partial derivatives 
also show that the the inverse of Jacobian matrix of $H$ has bounded 
entries.  Thus $H\inv$ is also Lipschitz.  The last formula of (1) can be verified directly.

To show (2) we note that 
$$
\frac {\partial \tilde f } {\partial \tilde x} = \frac {\partial f} {\partial x}  \cdot  
 \frac {\partial  x} {\partial \tilde x}  + \frac {\partial f} {\partial y} 
 \cdot  \frac {\partial  y} {\partial \tilde x}    .  
$$ 
Note that there is a constant $c>0$ such that 
$c\le  \frac {\partial  x} {\partial \tilde x}\le c\inv$.  
(2) can be verified easily in the 
horn neighbourhoods considered in (3) and for $|x/y^\xi|$ large by
(4).  In the complement of these  sets 
$\partial f/\partial x \sim y^{20}$ and $\partial f/\partial y = O(y^{21\frac 23 })$ and hence 
\begin{equation*}
\frac {\partial \tilde f } {\partial \tilde x} \sim  y^{20} \sim  \tilde y^{20},
\end{equation*}
and does not vanish.  This shows  
$\{ \partial (f \circ \Phi \inv) /\partial x = 0 \} 
= \Phi ( \{ \partial  f /\partial  x = 0 \})$.  Similar results for 
$\Psi_1$ and $\Psi_2$ are obvious.  
\end{proof}

We apply the same procedure to $g(x,y)$ and obtain a bi-Lipschitz homeomorphism $\tilde H$ so that 
$\tilde H$ and $\tilde g(\tilde x, \tilde y)$ satisfy the statement of
Proposition \ref{change}.  In what follows 
we shall drop the \textquotedblleft tilda" notation for variables and consider $\tilde f$ and $\tilde g$ as functions of $(x,y)$.  
 We show 
that the homotopy 
$$
F( x,  y, t) = t \tilde g(x,y) + (1-t) \tilde f (x,y) 
$$
is bi-Lipschitz trivial and can be trivialised by the vector field 
\begin{equation}\label{vectorfield}
v(x,y,t) = \frac \partial {\partial t}  - \frac {\partial F/\partial
  t}  {\partial F/\partial x}   
\frac \partial {\partial x}, \qquad 
v(x,y,t) = \frac \partial {\partial t}   \text { if }  \partial F/\partial x=0 .
\end{equation}
Thus to complete the proof of bi-Lipschitz equivalence of $f$ and $g$
it suffices to show:   

\begin{lem}
The vector field $v(x,y,t)$ of \eqref{vectorfield} is Lipschitz.  
\end{lem}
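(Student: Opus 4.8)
The vector field $v$ is built so that $v(F)=\partial F/\partial t-\frac{\partial F/\partial t}{\partial F/\partial x}\,\partial F/\partial x=0$, hence its flow preserves the levels of $F$ and carries $\tilde f=F(\cdot,\cdot,0)$ to $\tilde g=F(\cdot,\cdot,1)$; the trivialisation is bi-Lipschitz precisely when $v$ is Lipschitz. Since the $\partial/\partial t$--component of $v$ is constant, everything reduces to showing that
$$
w(x,y,t):=\frac{\partial F/\partial t}{\partial F/\partial x}=\frac{\tilde g-\tilde f}{t\,\partial_x\tilde g+(1-t)\,\partial_x\tilde f}
$$
is Lipschitz in $(x,y)$, uniformly in $t\in[0,1]$. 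The dependence on $t$ of the denominator is affine, so once $\partial_x F$ is bounded away from $0$ off the polar set, uniformly in $t$, the Lipschitz property in $t$ is automatic; thus the content is the $(x,y)$--regularity of $w$.

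The two structural inputs are the location of the zeros of numerator and denominator. By Proposition \ref{change}(2) both $\tilde f$ and $\tilde g$ have polar set exactly $\{x=0\}\cup\{x=\tilde y^{\xi}\}$, $\xi=5/3$, and each polar curve is a \emph{simple} transverse zero of the corresponding $\partial_x$. As $f$ and $g$ have $x$--degree $13$ with leading coefficient $1$, and since $H=\tilde H=\id$ for $|x|\ge C|y|^{\xi}$ by Proposition \ref{change}(4), both $\partial_x\tilde f$ and $\partial_x\tilde g$ are positive for $|x/y^{\xi}|$ large; having the same two simple zeros they share the sign pattern $(+,-,+)$ across $x<0$, $0<x<y^{\xi}$, $x>y^{\xi}$. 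Consequently $\partial_x F=t\,\partial_x\tilde g+(1-t)\,\partial_x\tilde f$ is a convex combination of same-sign quantities, vanishes exactly on the polar set, and satisfies $|\partial_x F|\ge\min(|\partial_x\tilde f|,|\partial_x\tilde g|)$. For the numerator, the reparametrisations \eqref{reparameter} together with the choice of $a,b$ in Lemmas \ref{ab} and \ref{abA} (which force the matching of critical values $A_1=\tilde A_1$ and $A_2=\tilde A_2$) make $\tilde f$ and $\tilde g$ agree, as the \emph{same} monomials in $\tilde y$, along $\{x=0\}$ and $\{x=\tilde y^{\xi}\}$; since $\partial_x\tilde f=\partial_x\tilde g=0$ there as well, $\tilde g-\tilde f$ vanishes to second order transversally while $\partial_x F$ vanishes only to first order. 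This order count is exactly what makes $w$ vanish to first order (Lipschitz rate) along the polar curves.

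The estimates are then organised in the natural scaling $z=x/y^{\xi}$. In a horn neighbourhood of each polar curve the maps $H,\tilde H$ are, by Proposition \ref{change}(3), analytic (a shift in $x$ plus a reparametrisation in $y$), so $\tilde f,\tilde g$ remain analytic there and their transverse Taylor expansions of the shape $A_i\tilde y^{\,\cdot}+\half\,\partial_x^2\tilde f\cdot(\text{dist})^2+\cdots$ yield $w\approx\frac{\partial_x^2(\tilde g-\tilde f)}{2\,\partial_x^2 F}\cdot(\text{transverse distance})$; the coefficient is bounded because $\partial_x^2\tilde f$ and $\partial_x^2\tilde g$ have the same sign on the curve (forced by the pattern $(+,-,+)$), keeping $\partial_x^2 F$ bounded below. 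For $|x|\ge C|y|^{\xi}$ one has $w=(g-f)/\partial_x F$, a genuine ratio of polynomials whose leading $x^{13}$--terms cancel, and boundedness of $w$ and its first derivatives is checked directly. In the remaining complement, $z$ bounded away from $\{0,1\}$, the leading weighted-homogeneous parts give $\partial_x F\sim y^{20}$ and $\tilde g-\tilde f\sim y^{65/3}$, so $w\sim y^{\xi}W(z)$ with $W$ bounded and vanishing linearly at $z=0,1$; then $\partial_x w=O(1)$ and $\partial_y w=O(y^{\xi-1})=o(1)$.

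The main obstacle is the transition zones, where the partitions of unity defining $H$ and $\tilde H$ vary and only $D^2H=O(y^{-\xi})$ is available, so $\partial_x^2\tilde f$, $\partial_x^2\tilde g$ (hence $\partial_x^2 F$, which enters $\partial_x w$ and $\partial_y w$) may blow up like $y^{-\xi}$. The plan is to show that this blow-up is absorbed: in these zones the transverse variable is $O(y^{\xi})$ while $|\partial_x F|$ is bounded below on the scale $y^{20}$, so the only dangerous contribution, $(\tilde g-\tilde f)\,\partial_x^2 F/(\partial_x F)^2$, stays bounded. Establishing this uniform bound across all three region types and their overlaps is the technical heart of the argument; the explicit analytic form of $H$ inside the horns and its equality to $\id$ outside (Proposition \ref{change}) are precisely what keep the estimates uniform and deliver a globally Lipschitz $w$, hence a Lipschitz $v$.
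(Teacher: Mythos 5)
Your proposal follows essentially the same route as the paper's proof: the same reduction to the ratio $w=(\partial F/\partial t)/(\partial F/\partial x)$, the same structural inputs (the common polar locus $\{x=0\}\cup\{x=y^{\xi}\}$, and the second-order transverse vanishing of $\tilde g-\tilde f$ along it forced by the matched critical values $A_1=\tilde A_1$, $A_2=\tilde A_2$ and the exact reparametrisation \eqref{reparameter}), and the same three-region decomposition (horn neighbourhoods, $|x|\ge C|y|^{\xi}$, and the complement) in the scaling $z=x/y^{\xi}$ with the same quantitative estimates. The differences are minor: your $(+,-,+)$ sign-pattern argument makes explicit why the convex combination $\partial_x F$ vanishes only on the polar set (a point the paper leaves implicit in its local normal forms $\partial F/\partial x = zy^{20}u$, $u(0)\neq 0$), while the ``technical heart'' you defer in the last region is exactly the one-line computation the paper carries out from the estimates you already state, namely $D(\tilde g-\tilde f)/\partial_xF=O(y^{\xi})$ and $(\tilde g-\tilde f)\,D(\partial_xF)/(\partial_xF)^2=O(y^{20+\xi})O(y^{20-\xi})/y^{40}=O(1)$.
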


\begin{proof}
The polar curves of $\tilde f$ and $\tilde g$ coincide :  
\begin{equation}\label{bothpolars}
\{ \partial \tilde f /\partial \tilde x = 0 \} 
=  \{ \partial \tilde g /\partial  x = 0 \} = \{x=0\} \cup \{x=y^\xi\}.
\end{equation}
As we shall show also 
$
\{ \partial \tilde F  /\partial \tilde x = 0 \} =  \{x=0\} \cup 
\{x=y^\xi\}$.  

We proceed separately in each of the horn neighbourhood with exponent $\xi$ 
of the polar curves \eqref{bothpolars}, for $|x|\ge C|y|^\xi$, $C$
large, and in the complement of these three sets.  

Suppose $|x|\le \varepsilon |y|^\xi$, $\varepsilon >0$ and small.  
By (iii) of Proposition \ref{change}, $\tilde f$ and $\tilde g$ are fractional 
convergent power series in $x$ and $y$.  
If we pass to new variables $z= x/y^\xi, y$ then, 
thanks to \eqref{reparameter}, 
\begin{eqnarray*}
\partial F/\partial x 
= z y^{20} u(z,y,t) \qquad 
\frac {\partial F} {\partial t} 
= \tilde g - \tilde f = z^2 y^{21 \frac 23} \eta(z,y), 
\end{eqnarray*}   
where $u$ and $\eta$ are fractional power series and $u(0)\ne 0$. Hence 
$$
 \frac {\partial F/\partial t}  {\partial F/\partial x} = z y^\xi h(z,y,t) = 
x h(z,y,t).
$$
Thus $\frac {\partial F/\partial t}  {\partial F/\partial x} $ is Lipschitz 
because the partial derivatives of $x h(z,y,t)$ 
are bounded: 
\begin{eqnarray*}
\frac {\partial}  {\partial x} (x h) = h + \frac x {y^\xi} \frac  
{\partial h} {\partial x} , \, \, 
\frac {\partial}  {\partial y} (x h) = 
x \frac {\partial h} {\partial z} \frac  
{\partial z} {\partial y} + x \frac {\partial h} {\partial y} = 
- \xi \frac x {y^\xi}\frac x {y} \frac  
{\partial h} {\partial z} + x \frac {\partial h} {\partial y}, \, \, 
\frac {\partial}  {\partial t} (x h) =  x  \frac  
{\partial h} {\partial t} . 
\end{eqnarray*}
A similar argument works for a horn neighbourhood of $x=y^\xi$.  

If $|x|\ge C|y|^\xi$, $C$ large, then by (iv) of Proposition \ref{change}, 
$\tilde f =f$ and $\tilde g = g$.  Then in  variables $x, w= y^\xi/x$ 
\begin{eqnarray*}
\partial F/\partial x 
= x^{m-1} u(x,w,t) \qquad 
\frac {\partial F} {\partial t} 
=  x^m \eta(x,w), 
\end{eqnarray*}   
where $u$ and $\eta$ are fractional power series and $u(0)\ne 0$.
Hence 
$$
 \frac {\partial F/\partial t}  {\partial F/\partial x} = x h(x,w,t) .
$$
Then, an elementary computation shows that the partial derivatives of $x h(x,w,t)$ 
are bounded.  

Suppose now that $x/y^\xi$ is bounded and that we are not in horn neighbourhoods of the polar curves.  
By Proposition \ref{change} one can 
 verify easily that on this 
set 
\begin{eqnarray*}
& & \tilde g - \tilde f = O( y^{20+\xi}),  \quad D(\tilde g - \tilde f) = O( y^{20+\xi}), \\
& & D^2 H\inv = O( y^{-\xi}) \\
& &  \partial F/\partial  x \sim  y^{20} ,  \quad D (\partial F/\partial  x) = O(  y^{20-\xi} ).
\end{eqnarray*}
Now a direct computation shows that the partial derivatives of 
$ \frac {\partial F/\partial t}  {\partial F/\partial x}$ are bounded.   
\end{proof}



\end{document}